\newif\ifsisc
\pgfplotsset{
tick label style = {font = \tiny},
legend style = {font = \tiny},
xlabel style={yshift=+0.5ex},
ylabel style={yshift=-1.0ex}
}
\newcommand\0{\boldsymbol{0}}
\newcommand{\eps}{\varepsilon}
\renewcommand{\d}{\mathrm{d}}
\newcommand{\y}{\mathbf{y}}
\newcommand{\N}{\mathbb{N}}
\renewcommand{\P}{\mathbb{P}}
\newcommand{\R}{\mathbb{R}}
\newcommand{\V}{\boldsymbol{\mathbb{V}}}
\newcommand{\X}{\mathbb{X}}
\newcommand{\GG}{\mathcal{G}}
\newcommand{\MM}{\mathcal{M}}
\newcommand{\NN}{\mathcal{N}}
\renewcommand{\SS}{\mathcal{S}}
\newcommand{\TT}{\mathcal{T}}
\newcommand\III{\mathfrak{I}}
\newcommand\MMM{\mathfrak{M}}
\newcommand\PPP{\mathfrak{P}}
\newcommand\QQQ{\mathfrak{Q}}
\newcommand{\qsat}{q_{\mathrm{sat}}}
\DeclareMathOperator*{\hull}{span}
\DeclareMathOperator*{\refine}{refine}
\DeclareMathOperator*{\supp}{supp}
\newcommand{\enorm}[3][]{#1|\!#1|\!#1|\,#2\,#1|\!#1|\!#1|_{#3}}
\newcommand{\bnorm}[2][]{#1|\!#1|\!#1|\,#2\,#1|\!#1|\!#1|}
\newcommand{\norm}[3][]{#1\|#2#1\|_{#3}}
\def\aa{\boldsymbol{a}}
\def\ee{\boldsymbol{e}}
\def\ff{\boldsymbol{f}}
\def\uu{\boldsymbol{u}}
\def\vv{\boldsymbol{v}}
\def\ww{\boldsymbol{w}}
\def\zz{\boldsymbol{z}}
\newcommand{\dual}[3][]{#1\langle#2\,,\,#3#1\rangle_{D}}
\newcommand{\Dual}[3][]{#1\langle#2\,,\,#3#1\rangle_{D \times \Gamma}}
\def\reff#1#2{\stackrel{\eqref{#1}}{#2}}
\def\refp#1#2{\stackrel{\phantom{\eqref{#1}}}{#2}}
\def\preff#1#2{\stackrel{\phantom{\eqref{#1}}}{#2}}
\def\set#1#2{\big\{#1 \,:\, #2\big\}}
\def\coarse{\bullet}
\def\fine{\circ}
\def\Refine{{\mbox{\scriptsize$\pmb{\mathbb{REFINE}}$}}}
\def\P{\pmb{\mathbb{P}}}
\def\M{\pmb{\mathbb{M}}}
\def\Cest{C_{\rm est}}
\def\Cgoal{C_{\rm goal}}
\def\zz{\boldsymbol{z}}
\def\Crel{C_{\rm rel}}
\def\gbold{\boldsymbol{g}}
\def\dx{\,dx}
\def\dpi{\,d\pi}
\newcommand\tol{\mathsf{tol}}
\newcommand\rev[1]{{\color{black}#1}}
\newcommand\revx[1]{{\color{black}#1}}
\newtheorem{theorem}{Theorem}
\newtheorem{proposition}[theorem]{Proposition}
\newtheorem{lemma}[theorem]{Lemma}
\newtheorem{corollary}[theorem]{Corollary}
\newtheorem{algorithm}[theorem]{Algorithm}
\newtheorem{remark}[theorem]{Remark}
\title{Goal-oriented adaptivity for multilevel stochastic Galerkin FEM with nonlinear goal functionals}
\author{Alex Bespalov}
\address{School of Mathematics, University of Birmingham, Edgbaston, Birmingham B15 2TT, United Kingdom}
\email{a.bespalov@bham.ac.uk}
\author{Dirk Praetorius}
\address{Institute of Analysis and Scientific Computing, TU Wien, Wiedner Hauptstra\ss{}e~8--10, 1040 Vienna, Austria}
\email{dirk.praetorius@asc.tuwien.ac.at}
\author{Michele Ruggeri}
\address{Department of Mathematics, University of Bologna, Piazza di Porta San Donato 5, 40126 Bologna, Italy}
\email{m.ruggeri@unibo.it}
\date{\today}
\subjclass[2010]{35R60, 65C20, 65N30, 65N12, 65N15, 65N50}
\keywords{goal-oriented adaptivity, nonlinear goal functionals, a~posteriori error analysis,
multilevel stochastic Galerkin method, finite element method, parametric PDEs}
\thanks{\emph{Acknowledgments.}
The work of the first author was supported by the EPSRC under grant EP/P013791/1.
The work of the second author was supported by the Austrian Science Fund (FWF) under grants F65 and~P33216.
The third author is a member of the `Gruppo Nazionale per il Calcolo Scientifico (GNCS)'
of the Italian `Istituto Nazionale di Alta Matematica (INdAM)'
and was partially supported by the European Union - NextGenerationEU under the National Recovery and Resilience Plan (PNRR) - Mission 4 Education and research - Component 2 From research to business - Investment 1.1 Notice Prin 2022 - DD N. 104 of 2/2/2022, entitled \emph{Low-rank Structures and Numerical Methods in Matrix and Tensor Computations and their Application}, code 20227PCCKZ -- CUP J53D23003620006.
All authors would like to thank the Erwin Schr\"odinger International Institute for Mathematics and Physics (ESI)
at the University of Vienna for support and hospitality during the workshops on
\emph{Adaptivity, high dimensionality and randomness} (April 4--8, 2022) and
\emph{Approximation of high-dimensional parametric PDEs in forward UQ} (May 9--13, 2022),
where part of the work on this paper was undertaken.}
\begin{document}

\begin{abstract}
This paper is concerned with the numerical approximation of quantities of interest associated with solutions
to parametric elliptic partial differential equations (PDEs).
The key novelty of this work is in its focus on the quantities of interest represented
by continuously G{\^ a}teaux differentiable \emph{nonlinear} functionals.
We consider a class of parametric elliptic PDEs where
the underlying differential operator has affine dependence on a countably infinite number of uncertain parameters.
We design a goal-oriented adaptive algorithm for approximating nonlinear functionals of solutions to this class of parametric PDEs.
In the algorithm, the approximations of parametric solutions to the primal and dual problems are computed
using the \emph{multilevel} stochastic Galerkin finite element method (SGFEM) and
the adaptive refinement process is guided by reliable spatial and parametric error reduction indicators.
We prove that the proposed algorithm generates multilevel SGFEM approximations for which
the estimates of the error in the goal functional converge to zero.
Numerical experiments for a selection of test problems and nonlinear quantities of interest illustrate and underpin our theoretical findings.
\end{abstract}

\maketitle
\thispagestyle{fancy}


\section{Introduction} \label{sec:intro}

Numerical approximation methods and efficient solution strategies for high-dimensional parametric partial differential equations (PDEs)
have received a significant attention in the last two decades, particularly in the context of uncertainty quantification;
see the review articles~\cite{sg11, gwz14, CohenDeVore15}.
The focus of the work presented in this paper is on the design and analysis of
adaptive algorithms that generate accurate approximations of, in general, \emph{nonlinear} quantities of interest (QoIs)
derived from solutions to parametric elliptic PDEs.
While the classical Monte Carlo sampling and its more efficient modern variants (such as Quasi-Monte Carlo and multilevel Monte Carlo)
are effective in estimating the moments of solutions, the surrogate approximations that are \emph{functions} of the stochastic parameters
can be used to estimate a wide range of QoIs derived from solutions.
Two variants of surrogate approximations, both based on spatial discretizations with the finite element method (FEM),
have been extensively studied:
\emph{stochastic collocation FEMs} generate uncoupled discrete problems by sampling the PDE inputs at deterministically chosen points
(typically, the nodes of a sparse grid) and build a multivariate interpolant from the sampled discrete solutions;
in \emph{stochastic Galerkin FEMs}, the approximations are defined via Galerkin projection and represented as finite (sparse)
generalized polynomial chaos (gPC) expansions whose spatial coefficients are computed by solving a single fully coupled discrete system.

Numerical approximations of QoIs derived from solutions to parametric PDEs have been addressed in a number of works.
The multilevel Monte Carlo (MLMC) algorithm for estimating bounded linear functionals and
continuously Fr{\' e}chet differentiable nonlinear functionals of the solution has been studied
in~\cite{CharrierST_13_FEE} and~\cite{tsgu2013} for a large class of elliptic PDEs with random coefficients.
In particular, the convergence with optimal rates for MLMC approximations of nonlinear output functionals has been proved
in~\cite{tsgu2013} using the duality technique from~\cite{gilesSuli2002}.
In the same context of using the MLMC for estimating QoIs, an \emph{adaptive algorithm}
based on goal-oriented \textsl{a posteriori} error estimation has been developed in~\cite{emn16}.
The proposed algorithm performs a problem-dependent adaptive refinement of the MLMC mesh hierarchy
aiming to control the error in the QoI and thus substantially reducing the complexity of MLMC computations.

Goal-oriented \textsl{a~posteriori} error estimates 
for generic \emph{surrogate approximations} of solutions to parametric PDEs have been proposed in~\cite{bryantPrudhommeWildey2015}
and specifically for \emph{stochastic collocation} approximations
in~\rev{\cite{almeidaOden2010, bprs25}}.
\rev{In these} works, various goal-oriented adaptive refinement strategies guided by the error estimates are discussed and
tested for model PDE problems with inputs that depend on a \emph{finite} number of uncertain parameters.
In the context of \emph{stochastic Galerkin FEM} (SGFEM), the \textsl{a~posteriori} error estimation of linear functionals of solutions
was addressed in~\cite{mathelinLeMaitre2007, bprr18+} and, for nonlinear problems, in~\cite{butlerDawsonWildey2011}.
In particular, in our previous work~\cite{bprr18+}, we considered a class of parametric elliptic PDEs where the underlying differential operator
had affine dependence on a \emph{countably infinite} number of uncertain parameters.
We used the duality technique (e.g., from~\cite{gilesSuli2002}) to design a goal-oriented adaptive SGFEM algorithm
for accurate approximation of moments of linear functionals of the solution to this class of PDE problems.
In the algorithm, the solutions to the primal and dual problems were computed using the SGFEM in its 
simplest (albeit converging with suboptimal rates) 
\emph{single-level} variant,
where all spatial coefficients in the gPC expansion resided in the same finite element~space.

In this paper, we extend the 
results of~\cite{bprr18+} in three directions.
Firstly, we extend the goal-oriented \textsl{a posteriori} error analysis in~\cite{bprr18+} and the associated adaptive algorithm
to a class of continuously G{\^ a}teaux differentiable \emph{nonlinear} goal functionals.
Secondly, aiming for optimal convergence rates, we employ the \emph{multilevel} variant of SGFEM,
where different spatial gPC-coefficients are allowed to reside in different finite element spaces;
see~\cite{egsz14, cpb18+, bpr2020+, bpr2021+, BachmayrEEV_CAF}.
Finally, we prove the convergence result for the proposed goal-oriented adaptive algorithm
(thus, providing a theoretical guarantee that, given any positive error tolerance, the algorithm stops after a finite number of iterations).
We also demonstrate in a series of numerical experiments that
for certain parametric problems and for some classes of nonlinear goal functionals,
the proposed goal-oriented adaptive strategy yields optimal convergence rates
(for both the error estimates and the reference errors in nonlinear quantities of interest)
with respect to the overall dimension of the underlying multilevel approximations spaces.

The paper is organized as follows.
Section~\ref{sec:problem} introduces the parametric PDE problem that we consider in this work along with its weak formulation.
In section~\ref{sec:mlsgfem}, we follow~\cite{bpr2020+, bpr2021+} and recall the main ingredients of the multilevel SGFEM
as well as the computable
energy error estimates for multilevel SGFEM approximations.
Focusing on a class of nonlinear goal functionals,
section~\ref{sec:goafem_nonlinear} addresses the goal-oriented error estimation as well as
the design of the goal-oriented adaptive algorithm and its convergence analysis.
The results of numerical experiments are reported in section~\ref{sec:numerics}.

\section{Problem formulation}  \label{sec:problem}

Let $D \subset \R^d$, $d \in \{2,3\}$, be a bounded Lipschitz domain with polytopal boundary~$\partial D$,
endowed with the standard Lebesgue measure.
With $\Gamma := \prod_{m=1}^\infty [-1,1]$ denoting the infinitely-dimensional hypercube,
we consider a probability space $(\Gamma,\mathcal{B}(\Gamma),\pi)$.
Here, $\mathcal{B}(\Gamma)$ is the Borel $\sigma$-algebra on $\Gamma$
and $\pi$ is a probability measure,
which we assume to be the product of symmetric Borel probability measures $\pi_m$ on~$[-1,1]$,
i.e., $\pi(\y) = \prod_{m=1}^\infty \pi_m(y_m)$ for all $\y = (y_m)_{m \in \N} \in \Gamma$.
We refer to $D$ and $\Gamma$ as the \emph{physical domain}
and the \emph{parameter domain}, respectively.

We aim to approximate a functional value $\gbold(\uu) \in \R$,
where $\uu \colon D \times \Gamma \to \R$ solves the stationary diffusion problem
\begin{equation} \label{eq:strongform}
\begin{aligned}
 -\nabla \cdot (\aa(x,\y) \nabla \uu(x,\y) ) &= \ff(x,\y), \quad && x \in D, \, \y \in \Gamma,\\
\uu(x,\y) & = 0, \quad && x \in \partial D, \, \y \in \Gamma.
\end{aligned}
\end{equation}
In~\eqref{eq:strongform},
the differential operators are taken with respect to the spatial variable $x \in D$.
We assume that $\ff \,{\in}\, {L^2_\pi(\Gamma;H^{-1}(D))}$
and that the diffusion coefficient $\aa$ has affine dependence on the parameters,
i.e., there holds
\begin{equation} \label{eq1:a}
\aa(x,\y) = a_0(x) + \sum_{m=1}^\infty y_m a_m(x)
\quad \text{for all } x \in D \text{ and } \y = (y_m)_{m\in\N} \in \Gamma.
\end{equation}
We suppose that the scalar functions $a_m \in L^{\infty}(D)$ in~\eqref{eq1:a}
satisfy the following inequalities (cf.~\cite[section~2.3]{sg11}):
\begin{gather}
\label{eq2:a}
 0 < a_0^{\rm min} \le a_0(x) \le a_0^{\rm max} < \infty
 \quad \text{for almost all } x \in D,\\
\label{eq3:a}
 \tau := \frac{1}{a_0^{\rm min}} \, \bigg\| \sum_{m=1}^\infty |a_m| \bigg\|_{L^\infty(D)} < 1
 \text{\ \ and\ \ }
 \sum_{m=1}^\infty \norm{a_m}{L^\infty(D)} < \infty.
\end{gather}

Let $\X := H^1_0(D)$.
We consider the Bochner space $\V := L^2_\pi(\Gamma;\X)$
and define the following symmetric bilinear forms on $\V$:
\begin{align}\label{def:B0}
 B_0(\uu,\vv) &:= \int_\Gamma \int_D a_0(x) \nabla \uu(x,\y) \cdot \nabla \vv(x,\y) \,\d{x} \,\d{\pi(\y)},
 \\\label{def:B}
 B(\uu,\vv) &:= B_0(\uu,\vv)
 + \sum_{m=1}^\infty \int_\Gamma \int_D y_ma_m(x) \nabla \uu(x,\y) \cdot \nabla \vv(x,\y) \, \d{x} \, \d{\pi(\y)}.
\end{align}
Owing to~\eqref{eq1:a}--\eqref{eq3:a},
the bilinear forms $B(\cdot,\cdot)$ and $B_0(\cdot,\cdot)$ are continuous and elliptic on $\V$.
Moreover,
the norms they induce on $\V$, denoted by $\bnorm{\cdot}$ and $\enorm{\cdot}{0}$, respectively,
are equivalent in the sense that
\begin{equation}\label{eq:lambda}
 \lambda \, \enorm{\vv}{0}^2 \le \bnorm{\vv}^2 \le \Lambda \, \enorm{\vv}{0}^2
 \quad \text{for all } \vv \in \V,
\end{equation}
where the constants
$\lambda := 1-\tau$ and $\Lambda := 1+\tau$ satisfy $0 < \lambda < 1 < \Lambda < 2$.

The weak formulation of~\eqref{eq:strongform} reads as follows:
Find $\uu \in \V$ such that
\begin{equation} \label{eq:weakform}
B(\uu,\vv) = F(\vv) := \int_\Gamma \int_D \ff(x,\y) \vv(x,\y) \, \d{x} \, \d{\pi(\y)}
\quad \text{for all } \vv \in \V.
\end{equation}
The existence of a unique solution $\uu \in \V$ to~\eqref{eq:weakform} is guaranteed by the Riesz theorem.
Throughout this work, we will refer to~\eqref{eq:weakform} as the \emph{primal problem}.

Since we aim to approximate $\gbold(\uu) \approx \gbold(\uu_\coarse)$
by the functional value attained by an approximation $\uu_\coarse \approx \uu$,
we assume that the goal functional $\gbold \colon \V \to \R$ is continuous\footnote{Here and throughout the paper,
we use $\bullet$ as a placeholder for the iteration counter;
see, e.g., $\uu_\ell$ in Algorithm~\ref{algorithm}.
The notation is identical to that used in~\cite{bpr2020+, bpr2021+}.}.
Further assumptions on $\gbold$ will be specified later.

\section{Multilevel SGFEM discretization} \label{sec:mlsgfem}

In this section,
we introduce the main ingredients of the multilevel SGFEM discretization
employed in our goal-oriented adaptive algorithms.
We follow the approach (and the notation) of~\cite{bpr2020+,bpr2021+}.

\subsection{Discretization in the physical domain and mesh refinement}
\label{sec:spatial_discretization}

Let $\TT_\coarse$ be a \emph{mesh}, i.e.,
a regular finite partition of $D \subset \R^d$
into compact nondegenerate simplices (i.e., triangles for $d=2$ and tetrahedra for $d=3$).
Let $\NN_\coarse$ denote the set of vertices of $\TT_\coarse$. 
For mesh refinement,
we employ newest vertex bisection (NVB)~\cite{stevenson}.
We consider a (coarse) initial mesh $\TT_0$
and
denote by $\refine(\TT_0)$ the set of all meshes obtained from $\TT_0$
by performing finitely many steps of NVB refinement.
Throughout this work,
we assume that all meshes used for the discretization in the physical domain
belong to $\refine(\TT_0)$.

For each mesh $\TT_\coarse \in \refine(\TT_0)$,
we denote by $\widehat\TT_\coarse$ its uniform refinement.
For $d=2$,
$\widehat\TT_\coarse$
is the mesh obtained by decomposing each element of $\TT_\coarse$
into four triangles using three successive bisections.
For $d=3$, 
we refer to~\cite[Figure~3]{egp18+} and the associated discussion therein.
Let $\widehat\NN_\coarse$ be the set of vertices of $\widehat\TT_\coarse$.
We denote by $\NN_\coarse^+ := (\widehat\NN_\coarse \setminus \NN_\coarse) \setminus \partial D$
the set of new interior vertices
created by uniform refinement of $\TT_\coarse$.
For a set of marked vertices $\MM_\coarse \subseteq \NN_\coarse^+$,
let $\TT_\fine := \refine(\TT_\coarse,\MM_\coarse)$ be the coarsest mesh such that
$\MM_\coarse \subseteq \NN_\fine$,
i.e., all marked vertices of $\TT_\coarse$ are vertices of $\TT_\fine$.
Since NVB is a binary refinement rule, 
it follows that $\NN_\fine \subseteq \widehat\NN_\coarse$
and $(\NN_\fine \setminus \NN_\coarse) \setminus \partial D = \NN_\coarse^+ \cap \NN_\fine$.
In particular, the choices $\MM_\coarse = \emptyset$ and $\MM_\coarse = \NN_\coarse^+$ lead to the meshes
$\TT_\coarse = \refine(\TT_\coarse,\emptyset)$
and $\widehat\TT_\coarse = \refine(\TT_\coarse,\NN_\coarse^+)$, respectively.

With each mesh $\TT_\coarse \in \refine(\TT_0)$,
we associate the finite element space
\begin{equation*}
 \X_\coarse := \SS^1_0(\TT_\coarse)
 := \{v_\coarse \in \X : v_\coarse \vert_T \text{ is affine for all } T \in \TT_\coarse \} \subset \X = H^1_0(D),
\end{equation*}
consisting of globally continuous and $\TT_\coarse$-piecewise affine functions.
We denote by
$\{ \varphi_{\coarse,\xi} : \xi \in \NN_\coarse \setminus \partial D \}$ the basis of $\X_\coarse$
comprising the so-called hat functions,
i.e.,
for all $\xi \in \NN_\coarse$, $\varphi_{\coarse,\xi} \in \X_\coarse$
satisfies the Kronecker property $\varphi_{\coarse,\xi}(\xi') = \delta_{\xi\xi'}$ for all $\xi' \in \NN_\coarse$.
Consistent with this notation, $\widehat\X_\coarse := \SS^1_0(\widehat\TT_\coarse)$ denotes the finite element space
associated with the uniform refinement $\widehat\TT_\coarse$ of $\TT_\coarse$, and
$\{ \widehat\varphi_{\coarse,\xi} : \xi \in \widehat\NN_\coarse \setminus \partial D \}$ is
the corresponding set of hat functions (the basis of~$\widehat\X_\coarse$).
There holds the ($H^1$-stable) two-level decomposition
$\widehat\X_\coarse = \X_\coarse \oplus \hull\{ \widehat\varphi_{\coarse,\xi} : \xi \in \NN_\coarse^+ \}$.

\subsection{Discretization in the parameter domain and parametric enrichment}
\label{sec:parameter_discretization}

For all $m \in \N$,
let $(P_n^m)_{n\in\N_0}$ be the sequence of univariate polynomials
which are orthogonal to each other with respect to $\pi_m$ such that $P_n^m$ is a polynomial of degree
$n \in \N_0$ with $\norm{P_n^m}{L^2_{\pi_m}(-1,1)}=1$ and $P_0^m \equiv 1$.
It is well-known that $\{P_n^m : n\in\N_0 \}$ constitutes an orthonormal basis of $L^2_{\pi _m}(-1,1)$.

Let $\N_0^\N := \{\nu = (\nu_m)_{m\in\N} : \nu_m\in\N_0 \text{ for all } m\in\N \}$.
We define the support of $\nu = (\nu_m)_{m\in\N} \in \N_0^\N$ as $\supp(\nu):= \{m\in\N : \nu_m\neq 0 \}$.
We denote by
$ \III := \{\nu \in \N_0^{\N} : \#\supp(\nu) < \infty \}$ the set of all finitely supported elements of $\N_0^{\N}$.
Note that $\III$ is countable.
With each $\nu \in \III$, we associate the multivariate polynomial $P_\nu$ given by
\begin{equation*}
P_\nu(\y)
:= \prod_{m\in\N} P_{\nu_m}^m(y_m)
= \prod_{m\in\supp(\nu)} P_{\nu_m}^m(y_m)
\quad \text{for all } \nu \in \III \text{ and all } \y \in \Gamma.
\end{equation*}
It is well-known that
the set $\{ P_\nu : \nu\in\III \}$ is an orthonormal basis of $L^2_\pi(\Gamma)$;
see, e.g., \cite[Theorem~2.12]{sg11}.

Our discretization in the parameter domain will be based on an \emph{index set} $\PPP_\coarse$,
i.e.,
a finite subset of $\III$.
We denote by $\supp(\PPP_\coarse) := \bigcup_{\nu \in \PPP_\coarse} \supp(\nu)$
the set of active parameters in~$\PPP_\coarse$.
We denote by $\0 = (0,0,\dots)$ the zero index
and consider the initial index set $\PPP_0 := \{\0\}$.
Throughout this work,
we assume that all index sets employed for the discretization in the parameter domain
contain the zero index, i.e.,
there holds $\PPP_0 \subseteq \PPP_\coarse$
for each index set $\PPP_\coarse$.
Following~\cite{bprr18+}, for a fixed $\overline M \in \N$, we introduce the \emph{detail index set}
\begin{equation} \label{def:Q}
\QQQ_\coarse
:= \{ \mu \in \III \setminus \PPP_\coarse : \mu = \nu \pm \eps_m
\text{ for some } \nu \in \PPP_\coarse \text{ and some } m = 1,\dots, M_{\PPP_\coarse} + \overline M \}.
\end{equation}
Here,
$M_{\PPP_\coarse} := \#\supp(\PPP_\coarse) \in \N_0$ is the number of active parameters in $\PPP_\coarse$,
$\overline M$ represents the number of newly activated parameters in $\QQQ_\coarse$,
while,
for any $m \in \N$, $\eps_m \in \III$ denotes the $m$-th unit sequence,
i.e., $(\eps_m)_i = \delta_{mi}$ for all $i \in \N$.
In what follows, to simplify the presentation, we set $\overline M = 1$ in~\eqref{def:Q}.
A~parametric enrichment of $\PPP_\coarse$ is obtained by adding to it some marked indices
$\MMM_\coarse \subseteq \QQQ_\coarse$,
i.e., $\PPP_\fine := \PPP_\coarse \cup \MMM_\coarse$.
Clearly,
$\PPP_\coarse \subseteq \PPP_\fine \subseteq \PPP_\coarse \cup \QQQ_\coarse$, where
at least one of the inclusions is strict.

\subsection{Multilevel approximation and multilevel refinement}

We start by observing that the Bochner space $\V = L^2_\pi(\Gamma;\X)$
is isometrically isomorphic to
$\X \otimes L^2_\pi(\Gamma)$ and that
each function $\vv \in \V$ can be represented in the form
\begin{equation}\label{eq:representation}
 \vv(x,\y) = \sum_{\nu \in \III} v_\nu(x) P_\nu(\y)
 \quad\text{with unique coefficients}\
 v_\nu \in \X.
\end{equation}
A finite-dimensional subspace of $\V$ can be obtained by considering functions
with a similar representation,
where the infinite sum in~\eqref{eq:representation} is truncated
to a finite index set and the coefficients $v_\nu \in \X$ are approximated in suitable finite element spaces.
To this end,
let $\P_\coarse = [\PPP_\coarse, (\TT_{\coarse\nu})_{\nu \in \III}]$ be a
\emph{multilevel structure}~\cite{bpr2021+},
consisting of
a finite index set $\PPP_\coarse \subset \III$
and a family of meshes $(\TT_{\coarse\nu})_{\nu \in \III}$,
where $\TT_{\coarse\nu} \in \refine(\TT_0)$ for all $\nu \in \PPP_\coarse$,
while $\TT_{\coarse\nu} = \TT_0$ for all $\nu \in \III \backslash \PPP_\coarse$.

For two multilevel structures
$\P_\coarse = [\PPP_\coarse, (\TT_{\coarse\nu})_{\nu \in \III}]$
and
$\P_\fine = [\PPP_\fine, (\TT_{\fine\nu})_{\nu \in \III}]$,
we say that
$\P_\fine$ is obtained from $\P_\coarse$ using
one step of \emph{multilevel refinement},
and we write $\P_\fine = \Refine(\P_\coarse, \M_\coarse)$,
if the following conditions are satisfied:
\begin{itemize}
\item $\M_\coarse = [\MMM_\coarse, (\MM_{\coarse\nu})_{\nu \in \PPP_\coarse}]$
with $\MMM_\coarse \subseteq \QQQ_\coarse$ and
$\MM_{\coarse\nu} \subseteq \NN_{\coarse\nu}^+$ for all $\nu \in \PPP_\coarse$;
\item $\PPP_\fine = \PPP_\coarse \cup \MMM_\coarse$;
\item for all $\nu \in \PPP_\coarse$, there holds $\TT_{\fine\nu} = \refine(\TT_{\coarse\nu}, \MM_{\coarse\nu})$;
\item for all $\nu \in \III \backslash \PPP_\coarse$, there holds $\TT_{\fine\nu} = \TT_{\coarse\nu} = \TT_0$.
\end{itemize}
Mimicking
the notation in subsections~\ref{sec:spatial_discretization}--\ref{sec:parameter_discretization},
we consider the initial multilevel structure $\P_0 := [\PPP_0, (\TT_{0\nu})_{\nu \in \III}]$
consisting of the initial index set $\PPP_0$ and such that
$\TT_{0\nu} = \TT_0$ for all $\nu \in \III$.
We denote by $\Refine(\P_0)$ the set of all multilevel structures obtained from $\P_0$
by performing finitely many steps of multilevel refinement.
Throughout this work,
we assume that all multilevel structures
employed to construct a finite-dimensional subspace of $\V$
belong to $\Refine(\P_0)$.

Given a multilevel structure $\P_\coarse = [\PPP_\coarse, (\TT_{\coarse\nu})_{\nu \in \III}]$,
let $\X_{\coarse\nu} = \SS^1_0(\TT_{\coarse\nu})$ for all $\nu \in \PPP_\coarse$.
We consider the multilevel approximation space
\begin{equation}
\label{eq:def:V}
 \V_\coarse := \bigoplus_{\nu \in \PPP_\coarse} \V_{\coarse\nu} \subset \V
 \ \; \text{with} \ \;
 \V_{\coarse\nu} := \X_{\coarse\nu} \otimes {\rm span}\{P_\nu\}
 = {\rm span}\set{\varphi_{\coarse\nu,\xi} P_\nu}{\xi \in \NN_{\coarse\nu} \setminus \partial D}.
\end{equation}
Note that 
$\dim\V_\coarse = \sum_{\nu \in \PPP_\coarse}\dim\X_{\coarse\nu}$,
i.e.,
$\V_\coarse$ is a finite-dimensional subspace of $\V$,
and that each function $\vv_\coarse \in \V_\coarse$ can be represented in the form
(cf.~\eqref{eq:representation})
\begin{equation*}
 \vv_\coarse(x,\y) = \sum_{\nu \in \PPP_\coarse} v_{\coarse\nu}(x) P_\nu(\y)
 \quad\text{with unique coefficients}\
 v_{\coarse\nu} \in \X_{\coarse\nu}.
\end{equation*}
Moreover, by construction,
multilevel refinement implies nestedness of the associated multilevel spaces, i.e.,
if $\P_\fine \in \Refine(\P_\coarse)$ then $\V_\coarse \subseteq \V_\fine$.

For the multilevel approximation space~$\V_\coarse$ associated with
any given multilevel structure $\P_\coarse = [\PPP_\coarse, (\TT_{\coarse\nu})_{\nu \in \III}]$,
we consider the enriched subspace $\widehat\V_\coarse \subset \V$ defined as
\begin{equation}\label{eq:def:Vhat}
\widehat\V_\coarse 
:= \left(\bigoplus_{\nu \in \PPP_\coarse} \big[ \widehat\X_{\coarse\nu} \otimes {\rm span}\{P_\nu\} \big]\right)
\oplus
\left(\bigoplus_{\nu \in \QQQ_\coarse} \big[ \X_0 \otimes {\rm span}\{P_\nu\} \big]\right).
\end{equation}
Note that $\V_\coarse \subseteq \V_\fine \subseteq \widehat\V_\coarse$
for any $\P_\fine = \Refine(\P_\coarse, \M_\coarse)$. 
Moreover, $\widehat\V_\coarse$ corresponds to the multilevel structure
$\widehat\P_\coarse = \Refine(\P_\coarse, \M_\coarse)$
with $\M_\coarse = [\QQQ_\coarse, (\NN_{\coarse\nu}^+)_{\nu \in \PPP_\coarse}]$.

\subsection{Multilevel SGFEM approximation}

Given an arbitrary $\ww \in \V$, let $\ww_\coarse \in \V_\coarse$
and $\widehat\ww_\coarse \in \widehat\V_\coarse$ denote the Galerkin projections of $\ww$
onto $\V_\coarse$ and $\widehat\V_\coarse$, respectively,
i.e.,
\begin{subequations}
\begin{align}
\label{eq:galerkin:aux}
 B(\ww_\coarse, \vv_\coarse) &= B(\ww, \vv_\coarse) 
 \quad \text{for all } \vv_\coarse \in \V_\coarse,\\
\label{eq:galerkin:aux2}
 B(\widehat\ww_\coarse, \widehat\vv_\coarse) &= B(\ww, \widehat\vv_\coarse) 
 \quad \text{for all } \widehat\vv_\coarse \in \widehat\V_\coarse.
\end{align}
\end{subequations}
Existence and uniqueness of both $\ww_\coarse \in \V_\coarse$
and $\widehat \ww_\coarse \in \widehat\V_\coarse$ follow from the Riesz theorem.
Moreover,
there holds the so-called Galerkin orthogonality
\begin{equation} \label{eq:galerkin-orthogonality}
B(\ww - \ww_\coarse, \vv_\coarse ) = 0 
\quad \text{for all } \vv_\coarse \in \V_\coarse
\end{equation}
as well as
the best approximation property
\begin{equation} \label{eq:best_app}
\bnorm{\ww - \ww_\coarse} = \min_{\vv_\coarse \in \V_\coarse} \bnorm{\ww - \vv_\coarse}
\end{equation}
(the same properties clearly hold also for $\widehat\ww_\coarse$
with $\V_\coarse$ replaced by $\widehat\V_\coarse$).
Furthermore, since $\V_\coarse \subset \widehat\V_\coarse$, there holds
\begin{equation} \label{eq:Pythagoras}
\bnorm{\ww - \ww_\coarse}^2
=
\bnorm{\ww - \widehat\ww_\coarse}^2
+
\bnorm{\ww_\coarse - \widehat\ww_\coarse}^2.
\end{equation}
In particular,
\begin{equation}\label{eq:saturation_aux}
\bnorm{\ww - \widehat\ww_\coarse} \le \bnorm{\ww - \ww_\coarse}
\quad
\text{and}\quad
\bnorm{\ww_\coarse - \widehat\ww_\coarse} \le \bnorm{\ww - \ww_\coarse}
.
\end{equation}
We define the multilevel SGFEM approximation $\uu_\coarse \in \V_\coarse$
of the solution $\uu \in \V$ to the primal problem~\eqref{eq:weakform}
as the Galerkin projection of $\uu$ onto the multilevel approximation space~$\V_\coarse$.
Equivalently, $\uu_\coarse \in \V_\coarse$ can be characterized
as
the unique solution of the following discrete variational problem:
Find $\uu_\coarse \in \V_\coarse$ such that
\begin{equation}\label{eq:discrete_formulation}
 B(\uu_\coarse, \vv_\coarse) = F(\vv_\coarse) 
 \quad \text{for all } \vv_\coarse \in \V_\coarse. 
\end{equation}

\subsection{\textsl{A~posteriori} error estimation}
\label{sec:error_estimator}

To obtain computable estimates of the energy error
$\bnorm{\ww - \ww_\coarse}$ of the Galerkin projection,
we follow the approach proposed in~\cite{bpr2020+},
which is based on the separate estimation
of the error components associated with
discretizations in physical and parameter domains.

Here and in the sequel, for the sake of brevity,
we denote the inner product on $\X = H^1_0(D)$
by $\dual{w}{v} := \int_D a_0\nabla w \cdot \nabla v \, \d{x}$
and the induced energy norm
by $\norm{\cdot}{D} := \norm{a_0^{1/2}\nabla(\cdot)}{L^2(D)}$.

The parametric components of the error in the Galerkin approximation $\ww_\coarse$ 
are estimated using the hierarchical error indicators
\begin{subequations} \label{eq1:parametric-error-estimate}
\begin{equation} \label{eq1:parametric-error-estimate:a}
 \tau_\coarse(\ww|\nu) := \norm{e_{\ww|\coarse\nu}}{D}
 \quad \text{for all } \nu \in \QQQ_\coarse, 
\end{equation}
where $e_{\ww|\coarse\nu} \in \X_0$ is the unique solution of
\begin{equation} \label{eq1:parametric-error-estimate:b}
 \dual{e_{\ww|\coarse\nu}}{v_0}
 = B(\ww - \ww_\coarse, v_0 P_\nu) 
  \quad \text{for all } v_0  \in \X_0.
\end{equation}%
\end{subequations}%
The errors attributable to spatial discretizations are
estimated using the \emph{two-level} error indicators
\begin{equation}\label{eq1:spatial-error-estimate}
 \tau_{\coarse}(\ww|\nu,\xi)
 := \frac{|B(\ww-\ww_\coarse,\widehat\varphi_{\coarse\nu,\xi} P_\nu)|}{\norm{\widehat\varphi_{\coarse\nu,\xi}}{D}}
 \quad \text{for all } \nu \in \PPP_\coarse \text{ and all } \xi \in \NN_{\coarse\nu}^+.
\end{equation}
Overall, we thus consider the {\sl a~posteriori} error estimate
\begin{equation}\label{eq:def:tau}
 \tau_\coarse(\ww)
 := \Bigg( \sum_{\nu \in \PPP_\coarse}\sum_{\xi \in \NN_{\coarse\nu}^+} \tau_{\coarse}(\ww|\nu,\xi)^2
 + \sum_{\nu \in \QQQ_\coarse} \tau_\coarse(\ww|\nu)^2  \Bigg)^{1/2}.
\end{equation}

\begin{remark}
Note that, for a general unknown $\ww \in \V$, the error estimate $\tau_\coarse(\ww)$ is not computable.
However, we shall employ the estimate $\tau_\coarse(\ww)$ only for $\ww \in \{\uu, \zz[\uu_\bullet]\}$,
where $\uu \in \V$ is the solution to
the primal problem~\eqref{eq:weakform},
while $\zz[\cdot] \in \V$ denotes the solution to the so-called dual problem
(see,~\eqref{eq:nonlinear:weakform:dual} below).
For these choices of $\ww$,
one can evaluate $B(\ww-\ww_\coarse,\cdot)$ in~\eqref{eq1:parametric-error-estimate}--\eqref {eq1:spatial-error-estimate},
so that $\tau_\coarse(\ww)$ becomes fully computable;
e.g., for the primal solution $\ww = \uu$ and
$\vv \in \{ v_0 P_\nu , \widehat\varphi_{\coarse\nu,\xi} P_\nu\}$,
one has $B(\uu - \uu_\coarse, \vv) = F(\vv) - B(\uu_\coarse, \vv)$.
\end{remark}

It follows from the first inequality in~\eqref{eq:saturation_aux} that
the error of the Galerkin projection associated with the enriched multilevel space
$\widehat\V_\coarse$ is not larger than the one for the space~$\V_\coarse$.
We say that the \emph{saturation assumption} is satisfied,
if considering the enriched multilevel space leads to a uniform strict reduction of the best approximation error,
i.e., if there exists a constant $0 < \qsat < 1$ such that 
\begin{equation}\label{eq:saturation}
 \bnorm{\ww - \widehat\ww_\coarse} 
 \le \qsat \, \bnorm{\ww - \ww_\coarse}.
\end{equation}
We note that the adaptive multilevel SGFEM algorithm that ensures error reduction by a uniform factor at each refinement step
has recently been designed and analyzed in~\cite{BachmayrEEV_CAF}, albeit for a very specific type of the expansion in~\eqref{eq1:a}
written in terms of hierarchical and locally supported expansion coefficients and
for a parametric enrichment strategy associated with this expansion.

We now recall the following main result from~\cite{bpr2020+},
which shows the equivalence of the error estimate $\tau_\coarse(\ww)$ in~\eqref{eq:def:tau}
to the \emph{error reduction} $\bnorm{\ww_\coarse - \widehat\ww_\coarse}$.
This implies that the proposed error estimator is \emph{efficient}, i.e.,
up to a multiplicative constant,
it provides a lower bound for the energy norm of the error,
while its \emph{reliability}
(i.e., the upper bound for the error)
is equivalent to the saturation assumption~\eqref{eq:saturation}.

\begin{theorem}[{\cite[Theorem~2]{bpr2020+}}]\label{thm:estimator}
Let $d \in \{2, 3\}$ and $\ww \in \V$.
For the multilevel structures $\P_\coarse, \widehat\P_\coarse \in \Refine(\P_0)$,
consider the multilevel approximation spaces $\V_\coarse \subseteq \widehat\V_\coarse$
with the associated Galerkin solutions $\ww_\coarse \in \V_\coarse$
(solving~\eqref{eq:galerkin:aux}) and
$\widehat\ww_\coarse \in \widehat\V_\coarse$ (solving~\eqref{eq:galerkin:aux2}).
Then, there holds
\begin{equation}\label{eq1:thm:estimator}
 \Cest^{-1} \, \bnorm{\widehat \ww_\coarse - \ww_\coarse}
 \le \tau_\coarse(\ww) 
 \le \Cest \, \bnorm{\widehat \ww_\coarse - \ww_\coarse}. 
\end{equation} 
Furthermore,
under the saturation assumption~\eqref{eq:saturation},
the estimates~\eqref{eq1:thm:estimator} are equivalent to
\begin{equation}\label{eq2:thm:estimator}
 \frac{(1-\qsat^2)^{1/2}}{\Cest} \, \bnorm{\ww - \ww_\coarse}
 \le \tau_\coarse(\ww)
 \le \Cest \, \bnorm{\ww - \ww_\coarse},
\end{equation}
The constant $\Cest \ge 1$ in~\eqref{eq1:thm:estimator}--\eqref{eq2:thm:estimator}
depends only on $\TT_0$,
the mean field $a_0$, and the constants $\lambda, \Lambda > 0$ in~\eqref{eq:lambda}.
\qed
\end{theorem}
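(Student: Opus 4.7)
The plan is to reduce the equivalence~\eqref{eq1:thm:estimator} to a $B_0$-orthogonal block decomposition of a suitably chosen auxiliary error. The natural starting point is the Galerkin increment $\ee_\coarse := \widehat\ww_\coarse - \ww_\coarse \in \widehat\V_\coarse$, which, by Galerkin orthogonality and the nestedness $\V_\coarse \subset \widehat\V_\coarse$, satisfies $B(\ee_\coarse, \vv) = B(\ww - \ww_\coarse, \vv)$ for all $\vv \in \widehat\V_\coarse$. Since the $L^2_\pi$-orthonormality of the gPC basis makes the parameter-independent form $B_0$ (rather than $B$) block-diagonal across modes, I would pass from $\bnorm{\cdot}$ to the tensor-product energy norm $\enorm{\cdot}{0}$ via the uniform equivalence~\eqref{eq:lambda}.

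First, I would introduce the auxiliary error $\widetilde\ee_\coarse \in \widehat\V_\coarse$ defined by $B_0(\widetilde\ee_\coarse, \vv) = B(\ww - \ww_\coarse, \vv)$ for all $\vv \in \widehat\V_\coarse$. A Strang-type argument, comparing the two Galerkin equations for $\ee_\coarse$ (with $B$) and $\widetilde\ee_\coarse$ (with $B_0$) with identical right-hand side and using~\eqref{eq:lambda}, yields $\enorm{\widetilde\ee_\coarse}{0} \simeq \bnorm{\ee_\coarse}$ with constants depending only on $\lambda$ and $\Lambda$. The advantage is that $\widetilde\ee_\coarse$ admits a clean $B_0$-orthogonal decomposition
\[
\widetilde\ee_\coarse = \sum_{\nu \in \PPP_\coarse} \widetilde w_\nu P_\nu + \sum_{\nu \in \QQQ_\coarse} e_\nu P_\nu,
\]
with $\widetilde w_\nu \in \widetilde W_{\coarse\nu}$ (the $B_0$-orthogonal complement of $\X_{\coarse\nu}$ in $\widehat\X_{\coarse\nu}$) and $e_\nu \in \X_0$; the $\V_\coarse$-component vanishes because $B(\ww - \ww_\coarse, \vv_\coarse) = 0$ by~\eqref{eq:galerkin-orthogonality}. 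Consequently,
\[
\enorm{\widetilde\ee_\coarse}{0}^2 = \sum_{\nu \in \PPP_\coarse} \norm{\widetilde w_\nu}{D}^2 + \sum_{\nu \in \QQQ_\coarse} \norm{e_\nu}{D}^2.
\]

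Second, I would identify each block with its indicator. For $\nu \in \QQQ_\coarse$, testing the defining equation of $\widetilde\ee_\coarse$ with $v_0 P_\nu$ for $v_0 \in \X_0$ and invoking $\int_\Gamma P_\mu P_\nu \dpi = \delta_{\mu\nu}$ collapses the $B_0$-equation to $\dual{e_\nu}{v_0} = B(\ww - \ww_\coarse, v_0 P_\nu)$, which is precisely~\eqref{eq1:parametric-error-estimate:b}; hence $\norm{e_\nu}{D} = \tau_\coarse(\ww|\nu)$. For $\nu \in \PPP_\coarse$, the $B_0$-projection $\widetilde w_\nu \in \widetilde W_{\coarse\nu}$ must be related to the hat-basis data $B(\ww - \ww_\coarse, \widehat\varphi_{\coarse\nu,\xi} P_\nu)$; this is the standard two-level localization step for NVB (as in~\cite{cpb18+,bpr2020+}), which provides a uniform equivalence
\[
\norm{\widetilde w_\nu}{D}^2 \simeq \sum_{\xi \in \NN_{\coarse\nu}^+} \frac{|B(\ww - \ww_\coarse, \widehat\varphi_{\coarse\nu,\xi} P_\nu)|^2}{\norm{\widehat\varphi_{\coarse\nu,\xi}}{D}^2} = \sum_{\xi \in \NN_{\coarse\nu}^+} \tau_\coarse(\ww|\nu,\xi)^2,
\]
with constants depending only on $\TT_0$ and $a_0$. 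Summing the block contributions and returning from $\enorm{\cdot}{0}$ to $\bnorm{\cdot}$ via~\eqref{eq:lambda} yields~\eqref{eq1:thm:estimator}.

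Finally, for~\eqref{eq2:thm:estimator} I would combine the Pythagoras identity~\eqref{eq:Pythagoras} with the saturation assumption~\eqref{eq:saturation}: the identity and~\eqref{eq:saturation} give $\bnorm{\ee_\coarse}^2 \ge (1 - \qsat^2)\,\bnorm{\ww - \ww_\coarse}^2$ (reliability), while the second inequality in~\eqref{eq:saturation_aux} gives $\bnorm{\ee_\coarse} \le \bnorm{\ww - \ww_\coarse}$ (efficiency). The main obstacle is the spatial two-level localization on $\widetilde W_{\coarse\nu}$: the hat functions $\{\widehat\varphi_{\coarse\nu,\xi}\}_{\xi \in \NN_{\coarse\nu}^+}$ furnish an algebraic but not a $B_0$-orthogonal complement of $\X_{\coarse\nu}$ in $\widehat\X_{\coarse\nu}$, so the uniform equivalence between the $B_0$-projection norm on this complement and the diagonally scaled hat-expansion is the delicate piece of the argument and is where the NVB shape-regularity (and hierarchical/BPX-type stability, uniformly over $\refine(\TT_0)$) enters. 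Everything else is bookkeeping via Galerkin orthogonality, gPC orthonormality, and the form equivalence~\eqref{eq:lambda}.
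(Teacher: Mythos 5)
The paper does not prove this theorem; it imports it verbatim from \cite{bpr2020+}, so there is no internal proof to compare against. Your reconstruction is correct and follows the standard route of that reference: passing to the $B_0$-orthogonal block structure of the enriched space via the auxiliary problem, identifying the parametric blocks exactly with~\eqref{eq1:parametric-error-estimate} and the spatial blocks with the two-level indicators~\eqref{eq1:spatial-error-estimate} up to the uniform two-level stability of the hat-function splitting (which you rightly flag as the technical core, and which carries the dependence of $\Cest$ on $\TT_0$ and $a_0$), and deriving~\eqref{eq2:thm:estimator} from~\eqref{eq:Pythagoras}, \eqref{eq:saturation}, and~\eqref{eq:saturation_aux}.
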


\section{Goal-oriented adaptive SGFEM with nonlinear goal functional}
\label{sec:goafem_nonlinear}

In this section, for a class of (possibly nonlinear) goal functionals $\gbold \colon \V \to \R$,
we develop a goal-oriented error estimation strategy,
design the associated adaptive algorithm with multilevel SGFEM approximations, and perform its convergence analysis.

\subsection{Dual problem and goal-oriented error estimate}

Let the goal functional $\gbold \colon\! \V \to \R$ be in $C^1$,
in the sense that it is G\^ateaux differentiable
and its G\^ateaux derivative $\gbold' \colon \V \to \V^*$ is continuous.
In addition, we suppose that there exists $\Cgoal \ge 0$ such that
\begin{align}\label{eq:nonlinear:ass:goal}
   |\Dual{\gbold'(\vv) - \gbold'(\ww)}{\zz}|
   \le \Cgoal \, \bnorm{\vv-\ww} \bnorm{\zz}
   \quad \text{for all } \vv, \ww, \zz \in \V,
\end{align}
where $\Dual{\cdot}{\cdot}$ denotes the duality pairing between $\V$ and its dual $\V^*$.
Following the approach adopted in~\cite{bprr18+} for the case of linear goal functionals,
we aim to formulate a dual problem which allows to derive a goal-oriented error estimate.

The fundamental theorem of calculus proves that
\begin{equation}\label{eq:nonlinear:goal:dual:exact}
 \gbold(\uu) - \gbold(\uu_\coarse) 
 = \int_0^1 \Dual{\gbold'(\uu_\coarse + t \, (\uu - \uu_\coarse))}{\uu - \uu_\coarse} \, \d{t}
 =: \Dual{\gbold^\star_{\uu}(\uu_\coarse)}{\uu - \uu_\coarse}.
\end{equation}
This identity suggests to consider a dual problem with right-hand side given by $\gbold^\star_{\uu}(\uu_\coarse) \in \V^\star$.
However, $\gbold^\star_{\uu}(\uu_\coarse)$ depends also on the unknown solution $\uu$ 
and thus cannot be used to formulate a practical dual problem.
Observing that formally $\|\gbold^\star_{\uu}(\uu_\coarse) - \gbold'(\uu_\coarse)\|_{\V^\star} \to 0$ as $\uu_\coarse \to \uu$,
for a given $\ww \in \V$
(in what follows, $\ww \in \{ \uu, \uu_\coarse \}$),
we consider the following (practical, if $\ww$ is known) dual problem:
Find $\zz[\ww] \in \V$ such that
\begin{equation}\label{eq:nonlinear:weakform:dual}
 B(\vv,\zz[\ww]) = \Dual{\gbold'(\ww)}{\vv}
 \quad \text{for all } \vv \in \V.
\end{equation}
Later, we will approximate $\zz[\ww] \in \V$ by its Galerkin projection $\zz_\coarse[\ww] \in \V_\coarse$, i.e., 
\begin{equation}\label{eq:nonlinear:weakform:dual:discrete}
 B(\vv_\coarse,\zz_\coarse[\ww]) = \Dual{\gbold'(\ww)}{\vv_\coarse}
 \quad \text{for all } \vv_\coarse \in \V_\coarse.
\end{equation}
Existence and uniqueness of both $\zz[\ww] \in \V$ and $\zz_\coarse[\ww] \in \V_\coarse$ follow from the Riesz theorem.
Note that
\begin{equation*}
\begin{split}
 \gbold(\uu) - \gbold(\uu_\coarse) 
 &\reff{eq:nonlinear:goal:dual:exact}=
 \Dual{\gbold^\star_{\uu}(\uu_\coarse) - \gbold'(\uu_\coarse)}{\uu - \uu_\coarse} + \Dual{\gbold'(\uu_\coarse)}{\uu - \uu_\coarse}
 \\& 
 \reff{eq:nonlinear:weakform:dual}=
 \Dual{\gbold^\star_{\uu}(\uu_\coarse) - \gbold'(\uu_\coarse)}{\uu - \uu_\coarse} + B( \uu - \uu_\coarse, \zz[\uu_\coarse] )
 \\& 
 \reff{eq:galerkin-orthogonality}=
 \Dual{\gbold^\star_{\uu}(\uu_\coarse) - \gbold'(\uu_\coarse)}{\uu - \uu_\coarse} +
 B( \uu - \uu_\coarse, \zz[\uu_\coarse] - \zz_\coarse[\uu_\coarse] ),
\end{split}
\end{equation*}
and the first term on the right-hand side can be estimated as follows:
\begin{align*}
 \Dual{\gbold^\star_{\uu}(\uu_\coarse) - \gbold'(\uu_\coarse)}{\uu - \uu_\coarse}
 &\reff{eq:nonlinear:goal:dual:exact}= \int_0^1 \Dual{[\gbold'(\uu_\coarse + t(\uu-\uu_\coarse))
                                - \gbold'(\uu_\coarse)]}{\uu - \uu_\coarse} \, \d{t}
 \\
 &\reff{eq:nonlinear:ass:goal}\le \Cgoal \int_0^1 t
 \bnorm{\uu - \uu_\coarse}^2\, \d{t}
 = \frac{1}{2} \, \Cgoal \, 
 \bnorm{\uu - \uu_\coarse}^2.
\end{align*}
Hence, we derive the following estimate of the error in the nonlinear goal~functional:
\begin{equation}\label{eq:nonlinear:goal-error-estimate}
 | \gbold(\uu) - \gbold(\uu_\coarse) | 
 \le \bnorm{\uu - \uu_\coarse} \bnorm{\zz[\uu_\coarse] - \zz_\coarse[\uu_\coarse]} + \frac{1}{2} \,\Cgoal \, \bnorm{\uu-\uu_\coarse}^2.
\end{equation}

\begin{remark}
The key assumption~\eqref{eq:nonlinear:ass:goal}, and hence the error estimate~\eqref{eq:nonlinear:goal-error-estimate},
is valid at least for linear and quadratic goal functionals.
First, for a bounded linear goal functional $\gbold \in \V^*$, one has
$\Dual{\gbold'(\vv)}{\zz} = \Dual{\gbold}{\zz}$ for all $\vv, \zz \in \V$.
Hence, the dual problem in~\eqref{eq:nonlinear:weakform:dual} simplifies to the following:
Find $\zz \in \V$ such that
$ B(\vv,\zz) = \Dual{\gbold}{\vv} $
for all $\vv \in \V$.
Furthermore, inequality~\eqref{eq:nonlinear:ass:goal} is satisfied with $\Cgoal = 0$ and
the error estimate~\eqref{eq:nonlinear:goal-error-estimate} reduces to the following (cf.~\cite[section~1.1]{bprr18+}):
\[
   \lvert \gbold(\uu) - \gbold(\uu_\coarse) \rvert
   \le \bnorm{\uu - \uu_\coarse} \, \bnorm{\zz - \zz_\coarse}.
\]
Second, consider the quadratic goal functional $\gbold(\uu) = b(\uu,\uu)$, where $b \colon \V \times \V \to \R$ is a continuous bilinear form.
Then, $\Dual{\gbold'(\vv)}{\zz} = b(\vv,\zz) + b(\zz,\vv)$, and it follows that
\begin{align*}
 \Dual{\gbold'(\vv) - \gbold'(\ww)}{\zz}
 = b(\vv-\ww,\zz) + b(\zz,\vv-\ww)
 \quad
 \forall\, \vv, \ww, \zz \in \V.
\end{align*}%
Hence,~\eqref{eq:nonlinear:ass:goal} is satisfied and $\Cgoal > 0$ depends only on
the continuity constant for $b(\cdot,\cdot)$.
\end{remark}

The following lemma will later turn out to be a crucial argument.

\begin{lemma}\label{prop:nonlinear:goal:aux:stability}
For any $\ww \in \V$, there holds
\begin{align}\label{eq:nonlinear:goal:aux:stability}
 \bnorm{\zz_\coarse[\ww] - \zz_\coarse[\uu_\coarse]} 
 \le \bnorm{\zz[\ww] - \zz[\uu_\coarse]}
 \le \Cgoal \, \bnorm{\ww - \uu_\coarse}.
\end{align}
\end{lemma}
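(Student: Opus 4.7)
The plan is to prove the two inequalities in~\eqref{eq:nonlinear:goal:aux:stability} separately, working from right to left, since the right inequality is the conceptually substantive one and the left one is then a standard Galerkin projection argument.

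For the right inequality $\bnorm{\zz[\ww] - \zz[\uu_\coarse]} \le \Cgoal \bnorm{\ww - \uu_\coarse}$, I would subtract the dual problems~\eqref{eq:nonlinear:weakform:dual} associated with $\ww$ and with $\uu_\coarse$ to obtain
\begin{equation*}
  B(\vv, \zz[\ww] - \zz[\uu_\coarse]) = \Dual{\gg'(\ww) - \gg'(\uu_\coarse)}{\vv}
  \quad \text{for all } \vv \in \V.
\end{equation*}
Testing with $\vv := \zz[\ww] - \zz[\uu_\coarse] \in \V$ and using that $B(\cdot,\cdot)$ is symmetric with $B(\vv,\vv) = \bnorm{\vv}^2$, the assumption~\eqref{eq:nonlinear:ass:goal} on $\gg'$ yields
\begin{equation*}
  \bnorm{\zz[\ww] - \zz[\uu_\coarse]}^2
  = \Dual{\gg'(\ww) - \gg'(\uu_\coarse)}{\zz[\ww] - \zz[\uu_\coarse]}
  \le \Cgoal \, \bnorm{\ww - \uu_\coarse} \, \bnorm{\zz[\ww] - \zz[\uu_\coarse]}.
\end{equation*}
Dividing by $\bnorm{\zz[\ww] - \zz[\uu_\coarse]}$ (the inequality is trivial if this vanishes) gives the claim.

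For the left inequality, I would observe that the discrete dual formulation~\eqref{eq:nonlinear:weakform:dual:discrete} and the continuous one~\eqref{eq:nonlinear:weakform:dual} both have the same right-hand side $\Dual{\gg'(\ww)}{\cdot}$, so subtracting them shows that, for each $\ww \in \V$, $\zz_\coarse[\ww] \in \V_\coarse$ is the Galerkin projection of $\zz[\ww]$ with respect to the symmetric bilinear form $B(\cdot,\cdot)$:
\begin{equation*}
  B(\vv_\coarse, \zz_\coarse[\ww]) = B(\vv_\coarse, \zz[\ww])
  \quad \text{for all } \vv_\coarse \in \V_\coarse.
\end{equation*}
By linearity of this projection in its argument, $\zz_\coarse[\ww] - \zz_\coarse[\uu_\coarse]$ coincides with the Galerkin projection of $\zz[\ww] - \zz[\uu_\coarse]$ onto $\V_\coarse$. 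A Pythagoras identity analogous to~\eqref{eq:Pythagoras} (applied to $\zz[\ww] - \zz[\uu_\coarse]$ and its projection onto $\V_\coarse$, with the zero function playing the role of the coarse approximation) then yields that Galerkin projection is non-expansive in $\bnorm{\cdot}$, and the first inequality follows.

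I do not anticipate any real obstacle here: both steps are textbook energy-norm arguments for symmetric coercive problems, and the only input beyond the definitions of $\zz[\cdot]$ and $\zz_\coarse[\cdot]$ is the Lipschitz-type bound~\eqref{eq:nonlinear:ass:goal} on $\gg'$, which is precisely what has been postulated. The one small thing to be careful about is to note explicitly that, although $\ww \mapsto \zz[\ww]$ is \emph{not} linear (as $\gg'$ is nonlinear), the \emph{difference} $\zz[\ww] - \zz[\uu_\coarse]$ still satisfies a linear variational problem, which is what enables the Galerkin projection interpretation above.
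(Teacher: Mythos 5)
Your proposal is correct and follows essentially the same route as the paper: the right inequality is proved identically (subtract the two dual problems, test with the difference, apply~\eqref{eq:nonlinear:ass:goal}, divide), and for the left inequality the paper's direct Cauchy--Schwarz computation and your Pythagoras/non-expansiveness argument are two standard phrasings of the same Galerkin-orthogonality fact. Your explicit remark that $\zz[\ww]-\zz[\uu_\coarse]$ solves a linear problem even though $\ww\mapsto\zz[\ww]$ is nonlinear is exactly the right point of care.
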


\begin{proof}
First, it follows from~\eqref{eq:nonlinear:weakform:dual}--\eqref{eq:nonlinear:weakform:dual:discrete} that
\begin{align*}
 \bnorm{\zz_\coarse[\ww] &- \zz_\coarse[\uu_\coarse]}^2 
 = B(\zz_\coarse[\ww] - \zz_\coarse[\uu_\coarse], \zz_\coarse[\ww]) 
 - B(\zz_\coarse[\ww] - \zz_\coarse[\uu_\coarse], \zz_\coarse[\uu_\coarse] )
 \\
 &\refp{eq:nonlinear:weakform:dual:discrete}= B(\zz_\coarse[\ww] - \zz_\coarse[\uu_\coarse], \zz[\ww]) 
 - B(\zz_\coarse[\ww] - \zz_\coarse[\uu_\coarse], \zz[\uu_\coarse] )
 \\
 &\refp{eq:nonlinear:weakform:dual}=B(\zz_\coarse[\ww] - \zz_\coarse[\uu_\coarse], \zz[\ww] - \zz[\uu_\coarse]) 
 \le \bnorm{\zz_\coarse[\ww] - \zz_\coarse[\uu_\coarse]} \bnorm{\zz[\ww] - \zz[\uu_\coarse]},
\end{align*}
which proves the first estimate. The second estimate follows from~\eqref{eq:nonlinear:weakform:dual} and~\eqref{eq:nonlinear:ass:goal}, namely
\begin{align*}
 \bnorm{\zz[\ww] &- \zz[\uu_\coarse]}^2 
 = B(\zz[\ww] - \zz[\uu_\coarse], \zz[\ww])
 - B(\zz[\ww] - \zz[\uu_\coarse], \zz[\uu_\coarse])
 \\&\reff{eq:nonlinear:weakform:dual}= \Dual{\gbold'(\ww)}{\zz[\ww] - \zz[\uu_\coarse]}
 - \Dual{\gbold'(\uu_\coarse)}{\zz[\ww] - \zz[\uu_\coarse]}
 \\&\refp{eq:nonlinear:weakform:dual}= \Dual{\gbold'(\ww) - \gbold'(\uu_\coarse)}{\zz[\ww] - \zz[\uu_\coarse]}
 \reff{eq:nonlinear:ass:goal}\le \Cgoal \bnorm{\ww - \uu_\coarse} \bnorm{\zz[\ww] - \zz[\uu_\coarse]}.
\end{align*}
This concludes the proof.
\end{proof}

To estimate the energy errors appearing on the right-hand side of~\eqref{eq:nonlinear:goal-error-estimate},
we consider the error estimation strategy introduced in section~\ref{sec:error_estimator}.
In the rest of the paper, unless otherwise specified, we use the abbreviated notation
\begin{equation}\label{eq:nonlinear:tau:abbreviations}
 \mu_\coarse := \tau_\coarse(\uu)
 \quad \text{ and } \quad
 \zeta_\coarse := \tau_\coarse(\zz[\uu_\coarse]). 
\end{equation}
The same notation will be used for local contributions to the error estimates,~i.e.,
\begin{equation} \label{eq:nonlinear:tau:abbreviations:local}
   \mu_\coarse(\nu) := \tau_\coarse(\uu|\nu),\ 
   \mu_\coarse(\nu,\xi) :=\tau_\coarse(\uu|\nu,\xi)
   \ \text{and}\
   \zeta_\coarse(\nu) := \tau_\coarse(\zz[\uu_\coarse]|\nu),\ 
   \zeta_\coarse(\nu,\xi) :=\tau_\coarse(\zz[\uu_\coarse]|\nu,\xi).
\end{equation}
We emphasize that both error estimates in~\eqref{eq:nonlinear:tau:abbreviations},
as well as their local contributions, are indeed computable.
Combining the error estimate~\eqref{eq:nonlinear:goal-error-estimate} with
the results of Theorem~\ref{thm:estimator} and Lemma~\ref{prop:nonlinear:goal:aux:stability},
we obtain a reliable \textsl{a~posteriori} error estimate of the error in the nonlinear goal~functional.
We emphasize that the constant $\Crel$ in the following estimate~\eqref{eq1:nonlinear:cor:goal} depends only
on the saturation assumption~\eqref{eq:saturation} for $\ww = \uu$ and $\ww = \zz[\uu]$,
while any dependence on $\zz[\uu_\coarse]$ as used in the definition of $\zeta_\coarse$ is avoided.

\begin{proposition}\label{prop:eq1:nonlinear:cor:goal}
Let $d \in \{2, 3\}$.
Suppose the saturation assumption~\eqref{eq:saturation} for both the primal solution $\ww = \uu$ to~\eqref{eq:weakform}
and the (theoretical) dual solution $\ww = \zz[\uu]$.
Then, there holds the \textsl{a~posteriori} goal-oriented error estimate
\begin{align}\label{eq1:nonlinear:cor:goal}
 |\gbold(\uu) - \gbold(\uu_\coarse) | 
 \le \Crel \,\mu_\coarse \, \big[ \mu_\coarse^2 + \zeta_\coarse^2 \big]^{1/2},
\end{align}
where $\Crel>0$ depends only
on the constants $\Cgoal \ge 0$ in~\eqref{eq:nonlinear:ass:goal},
$\Cest \ge 1$ in~\eqref{eq1:thm:estimator},
and $0 < \qsat < 1$ in~\eqref{eq:saturation}.
\end{proposition}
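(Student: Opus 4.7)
The plan is to start from the identity~\eqref{eq:nonlinear:goal-error-estimate} and to transform each energy-norm factor on its right-hand side into the computable estimators $\mu_\coarse$ and $\zeta_\coarse$, before reassembling the product structure $\mu_\coarse(\mu_\coarse^2+\zeta_\coarse^2)^{1/2}$ by Cauchy--Schwarz. The primal energy error is immediate: Theorem~\ref{thm:estimator} applied with $\ww=\uu$, together with the saturation assumption for $\uu$, gives $\bnorm{\uu-\uu_\coarse} \le \frac{\Cest}{(1-\qsat^2)^{1/2}}\mu_\coarse$, so the quadratic term $\frac{1}{2}\Cgoal\bnorm{\uu-\uu_\coarse}^2$ in~\eqref{eq:nonlinear:goal-error-estimate} is at once a multiple of $\mu_\coarse^2$.

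The delicate step is to bound $\bnorm{\zz[\uu_\coarse]-\zz_\coarse[\uu_\coarse]}$, because saturation is available for $\zz[\uu]$ but the computable estimator is $\zeta_\coarse = \tau_\coarse(\zz[\uu_\coarse])$. I would insert $\zz[\uu]$ and its Galerkin projection via the triangle inequality,
\[
\bnorm{\zz[\uu_\coarse]-\zz_\coarse[\uu_\coarse]}
\le \bnorm{\zz[\uu_\coarse]-\zz[\uu]}
+ \bnorm{\zz[\uu]-\zz_\coarse[\uu]}
+ \bnorm{\zz_\coarse[\uu]-\zz_\coarse[\uu_\coarse]},
\]
and then control the outer two terms by $\Cgoal\bnorm{\uu-\uu_\coarse}$ using Lemma~\ref{prop:nonlinear:goal:aux:stability}, while the middle term is bounded by $\frac{\Cest}{(1-\qsat^2)^{1/2}}\tau_\coarse(\zz[\uu])$ via~\eqref{eq2:thm:estimator} under the saturation hypothesis for $\zz[\uu]$.

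The main obstacle is the remaining replacement of $\tau_\coarse(\zz[\uu])$ by the computable quantity $\zeta_\coarse = \tau_\coarse(\zz[\uu_\coarse])$, since we have no saturation assumption for $\zz[\uu_\coarse]$. The crucial observation here is that $\tau_\coarse(\cdot)$ is a seminorm on $\V$: the indicators in~\eqref{eq1:parametric-error-estimate}--\eqref{eq1:spatial-error-estimate} are computed from the residual $B(\ww-\ww_\coarse,\cdot)$, which is linear in $\ww$ because the Galerkin projection is linear, so the $\ell^2$-aggregated quantity $\tau_\coarse(\ww)$ satisfies the triangle inequality. Applying this, then invoking the upper estimate of~\eqref{eq1:thm:estimator} (which needs no saturation) together with $\bnorm{\widehat\ww_\coarse-\ww_\coarse}\le\bnorm{\ww-\ww_\coarse}$ from~\eqref{eq:Pythagoras} and Lemma~\ref{prop:nonlinear:goal:aux:stability}, yields
\[
\tau_\coarse(\zz[\uu]) \le \zeta_\coarse + \tau_\coarse(\zz[\uu]-\zz[\uu_\coarse])
\le \zeta_\coarse + \Cest\Cgoal\,\bnorm{\uu-\uu_\coarse}.
\]

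Substituting everything back into~\eqref{eq:nonlinear:goal-error-estimate} and using once more $\bnorm{\uu-\uu_\coarse}\le\frac{\Cest}{(1-\qsat^2)^{1/2}}\mu_\coarse$ produces a bound of the shape $\alpha\mu_\coarse^2+\beta\mu_\coarse\zeta_\coarse$, with $\alpha,\beta$ depending only on $\Cgoal$, $\Cest$, and $\qsat$. Rewriting this as $\mu_\coarse(\alpha\mu_\coarse+\beta\zeta_\coarse)$ and estimating the inner sum by the elementary Cauchy--Schwarz inequality $\alpha\mu_\coarse+\beta\zeta_\coarse \le (\alpha^2+\beta^2)^{1/2}(\mu_\coarse^2+\zeta_\coarse^2)^{1/2}$ delivers~\eqref{eq1:nonlinear:cor:goal} with $\Crel=(\alpha^2+\beta^2)^{1/2}$, which is exactly of the advertised form.
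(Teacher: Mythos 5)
Your proposal is correct and follows essentially the same route as the paper's proof: the same triangle-inequality decomposition of $\bnorm{\zz[\uu_\coarse]-\zz_\coarse[\uu_\coarse]}$ through $\zz[\uu]$ controlled by Lemma~\ref{prop:nonlinear:goal:aux:stability}, the same use of the seminorm structure of $\tau_\coarse(\cdot)$ together with the efficiency bound in~\eqref{eq1:thm:estimator} to replace $\tau_\coarse(\zz[\uu])$ by $\zeta_\coarse$ up to a multiple of $\bnorm{\uu-\uu_\coarse}$, and the same final Cauchy--Schwarz step. The only (immaterial) deviation is that you bound $\tau_\coarse(\zz[\uu]-\zz[\uu_\coarse])$ via the best-approximation property and one application of the lemma, whereas the paper applies the lemma to both the coarse and the enriched Galerkin projections, changing the constant by a factor of two.
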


\begin{proof}
Under the saturation assumption~\eqref{eq:saturation} for $\ww = \uu$, Theorem~\ref{thm:estimator} proves that
\begin{align}\label{eq1:nonlinear:proof:cor:goal}
 \bnorm{\uu - \uu_\coarse} \le \frac{\Cest}{(1-\qsat^2)^{1/2}} \, \mu_\coarse.
\end{align}
Under the saturation assumption~\eqref{eq:saturation} for $\ww = \zz[\uu]$, Theorem~\ref{thm:estimator} proves that
\begin{align*}
 \bnorm{\zz[\uu_\coarse] - \zz_\coarse[\uu_\coarse]}
 &\preff{eq:nonlinear:goal:aux:stability}\le \bnorm{\zz[\uu] - \zz[\uu_\coarse]}
 + \bnorm{\zz[\uu] - \zz_\coarse[\uu]}
 + \bnorm{\zz_\coarse[\uu] - \zz_\coarse[\uu_\coarse]}
 \\
 &\reff{eq:nonlinear:goal:aux:stability}\le 
 2\Cgoal \, \bnorm{\uu - \uu_\coarse} + \bnorm{\zz[\uu] - \zz_\coarse[\uu]}
 \\
 &\reff{eq2:thm:estimator}\le
 2\Cgoal \, \bnorm{\uu - \uu_\coarse} + \frac{\Cest}{(1-\qsat^2)^{1/2}} \, \tau_\coarse(\zz[\uu]).
\end{align*}
Note that the \textsl{a~posteriori} error estimate has a seminorm structure and, hence,
it satisfies the triangle inequality. Therefore,
\begin{align*}
 |  \tau_\coarse(\zz[\uu]) - \zeta_\coarse |
 &\le \tau_\coarse(\zz[\uu]-\zz[\uu_\coarse])
 \reff{eq1:thm:estimator}\le \Cest \,  \bnorm{(\widehat \zz_\coarse[\uu] - \widehat\zz_\coarse[\uu_\coarse]) - ( \zz_\coarse[\uu] - \zz_\coarse[\uu_\coarse])}
 \\
 &\le \Cest \, \big( \bnorm{\widehat \zz_\coarse[\uu] - \widehat\zz_\coarse[\uu_\coarse]} + \bnorm{\zz_\coarse[\uu] - \zz_\coarse[\uu_\coarse]} \big)
 \reff{eq:nonlinear:goal:aux:stability}\le 2\Cest\Cgoal \bnorm{\uu - \uu_\coarse}.
\end{align*}
Combining the last two estimates, we derive that
\begin{align}
 \bnorm{\zz[\uu_\coarse] - \zz_\coarse[\uu_\coarse]} 
 &\preff{eq1:nonlinear:proof:cor:goal}\le 2\Cgoal \, \Big( 1  + \frac{\Cest^2}{(1-\qsat^2)^{1/2}} \Big) \,
 \bnorm{\uu - \uu_\coarse} + \frac{\Cest}{(1-\qsat^2)^{1/2}} \,\zeta_\coarse
 \nonumber
 \\
 &\reff{eq1:nonlinear:proof:cor:goal}\le
  \frac{\Cest}{(1-\qsat^2)^{1/2}} \, \bigg[ 2\Cgoal \, \Big( 1  + \frac{\Cest^2}{(1-\qsat^2)^{1/2}} \Big) \, 
  \mu_\coarse + \zeta_\coarse \bigg].
  \label{eq2:nonlinear:proof:cor:goal}
\end{align}
Overall, we thus see that
\begin{align*}
 | \gbold(\uu) - \gbold(\uu_\coarse) | 
 &\reff{eq:nonlinear:goal-error-estimate}\le \bnorm{\uu - \uu_\coarse} \bnorm{\zz[\uu_\coarse] - \zz_\coarse[\uu_\coarse]} + \frac{3}{2} \Cgoal \, \bnorm{\uu-\uu_\coarse}^2
 \\
 &\preff{eq:nonlinear:goal-error-estimate}\lesssim \mu_\coarse \big[ \mu_\coarse + \zeta_\coarse \big] + \mu_\coarse^2
\simeq \mu_\coarse \big[ \mu_\coarse^2 + \zeta_\coarse^2 \big]^{1/2}.
\end{align*}
This concludes the proof of~\eqref{eq1:nonlinear:cor:goal}.
\end{proof}

\subsection{Adaptive algorithm} \label{sec:alg_nonlinear}

Our aim in this section is to extend
the adaptive SGFEM algorithm from~\cite{bpr2020+}
(see Algorithm~7.C therein) to the present goal-oriented setting for parametric PDEs.
On the one hand, following~\cite{bpr2020+},
the enhancement of the approximation space $\V_\ell$ for each $\ell \in \N_0$
is steered in Algorithm~\ref{algorithm} below by the D\"orfler marking criterion~\cite{doerfler}
performed on the joint set of all spatial and parametric error indicators (see steps~(iv)--(v)).
On the other hand, in view of the \textsl{a~posteriori} error estimate~\eqref{eq1:nonlinear:cor:goal},
we exploit the ideas proposed in~\cite{bip2021} in a much simpler non-parametric setting to
ensure that either the primal estimator $\mu_\ell$
or the combined primal-dual estimator $(\mu_\ell^2 + \zeta_\ell^2)^{1/2}$
tends to zero as~$\ell \to \infty$.

\begin{algorithm}\label{algorithm}
{\bfseries Input:}
$\P_0 = [\PPP_0 , (\TT_{0\nu})_{\nu \in \III}]$ with
$\PPP_0 = \{ \0 \}$ and $\TT_{0\nu} := \TT_0$ for all $\nu \in \III$,
marking parameter $0 < \theta \le 1$.\\
{\bfseries Loop:}
For all $\ell = 0, 1, 2, \dots$, iterate the following steps:
\begin{itemize}
\item[\rm(i)] 
Compute the discrete primal solution $\uu_\ell \in \V_\ell$ and
the discrete dual solution $\zz_\ell[\uu_\ell] \in \V_\ell$ associated with $\P_\ell = [\PPP_\ell , (\TT_{\ell\nu})_{\nu \in \III}]$.
\item[\rm(ii)] 
For all $\nu \in \QQQ_\ell$, compute the parametric error indicators $\mu_\ell(\nu)$, $\zeta_\ell(\nu)$
given by~\eqref{eq:nonlinear:tau:abbreviations:local} and~\eqref{eq1:parametric-error-estimate}.
\item[\rm(iii)] 
For all $\nu \in \PPP_\ell$ and all $\xi \in \NN_{\ell\nu}^+$, compute
the spatial error indicators $\mu_\ell(\nu,\xi)$, $\zeta_\ell(\nu,\xi)$
given by~\eqref{eq:nonlinear:tau:abbreviations:local} and~\eqref{eq1:spatial-error-estimate}.
\item[\rm(iv)] 
Determine the sets $\MMM'_\ell \subseteq \QQQ_\ell$ and $\MM'_{\ell\nu} \subseteq \NN_{\ell\nu}^+$ for all $\nu \in \PPP_\ell$
such that
\begin{equation}\label{eq:doerfler:primal}
 \theta \, \mu_\ell^2
 \le \sum_{\nu \in \PPP_\ell} \sum_{\xi \in \MM'_{\ell\nu}} \mu_\ell(\nu,\xi)^2 + \sum_{\nu \in \MMM'_\ell} \mu_\ell(\nu)^2,
\end{equation}
where the overall cardinality $M_\ell' := \#\MMM'_\ell + \sum_{\nu \in \PPP_\ell}  \#\MM'_{\ell\nu}$ 
is minimal amongst all tuples $\M'_\ell = [\MMM'_\ell, (\MM'_{\ell\nu})_{\nu \in \PPP_\ell}]$
satisfying~\eqref{eq:doerfler:primal}.
\item[\rm(v)] 
Determine the sets $\MMM''_\ell \subseteq \QQQ_\ell$ and $\MM''_{\ell\nu} \subseteq \NN_{\ell\nu}^+$ for all $\nu \in \PPP_\ell$
such that
\begin{equation}\label{eq:nonlinear:doerfler:dual}
 \theta \, 
 \big[ \mu_\ell^2 + \zeta_\ell^2 \big]
 \le \sum_{\nu \in \PPP_\ell} \sum_{\xi \in \MM''_{\ell\nu}} \big[ \mu_\ell(\nu,\xi)^2 + \zeta_\ell(\nu,\xi)^2 \big]
 + \sum_{\nu \in \MMM''_\ell} \big[ \mu_\ell(\nu)^2 + \zeta_\ell(\nu)^2 \big].
\end{equation}
where the overall cardinality $M_\ell'' := \#\MMM''_\ell + \sum_{\nu \in \PPP_\ell}  \#\MM''_{\ell\nu}$ 
is minimal amongst all tuples $\M''_\ell = [\MMM''_\ell, (\MM''_{\ell\nu})_{\nu \in \PPP_\ell}]$
satisfying~\eqref{eq:nonlinear:doerfler:dual}.
\item[\rm(vi)] If $M_\ell' \le M_\ell''$, 
then choose $\MMM_\ell := \MMM'_\ell$ and $\MM_{\ell\nu} := \MM'_{\ell\nu}$ for all $\nu \in \PPP_\ell$. Otherwise choose $\MMM_\ell := \MMM''_\ell$ and $\MM_{\ell\nu} := \MM''_{\ell\nu}$ for all $\nu \in \PPP_\ell$.
\item[\rm(vii)] For all $\nu \in \PPP_\ell$, let $\TT_{\ell+1,\nu} := \refine(\TT_{\ell\nu},\MM_{\ell\nu})$.
\item[\rm(viii)] Define $\PPP_{\ell+1}
:= \PPP_\ell \cup \MMM_\ell$ and $\TT_{(\ell+1)\nu} := \TT_0$ for all $\nu \in \QQQ_{\ell+1}$.
\end{itemize}

{\bfseries Output:} For all $\ell \in \N_0$, the algorithm returns
the approximation $\gbold(\uu_\ell)$ of the goal functional $\gbold(\uu)$
and the associated goal-oriented error estimate $\mu_\ell \, \big[ \mu_\ell^2 + \zeta_\ell^2 \big]^{1/2}$.
\qed
\end{algorithm}

We note that Algorithm~\ref{algorithm} can be seen as an extension of the goal-oriented adaptive algorithm from~\cite{bprr18+}
to the case of \emph{nonlinear} goal functionals and \emph{multilevel} SGFEM approximations.
%
While the computations of the discrete primal and dual solutions in step~{\rm(i)} of Algorithm~\ref{algorithm}
can be carried out in parallel in the case of a linear goal functional $\gbold \in \V^*$
(in this case, the discrete primal and dual problems
are independent of each other),
in the nonlinear case they must be performed sequentially (first the primal problem, then the dual problem),
because the right-hand side of the discrete dual problem, i.e., \eqref{eq:nonlinear:weakform:dual:discrete} with $\ww = \uu_\coarse$,
depends on the discrete primal solution.

\subsection{Convergence analysis}

The following theorem is the main theoretical result of the present work.
Specifically, we prove
that Algorithm~\ref{algorithm} drives the
goal-oriented error estimates $\mu_\ell \, \big[ \mu_\ell^2 + \zeta_\ell^2 \big]^{1/2}$
to zero.
We emphasize that this result holds independently of the saturation assumption~\eqref{eq:saturation}.

\begin{theorem}\label{thm:nonlinear:plain_convergence}
Let $d \in \{2, 3\}$.
For any choice of the marking parameter $0 < \theta \le 1$,
Algorithm~\ref{algorithm}
yields a convergent sequence of estimator products, i.e.,
$
 \mu_\ell \big[\mu_\ell^2 + \zeta_\ell^2\big]^{1/2} \xrightarrow{\ell \to \infty} 0.
$
\end{theorem}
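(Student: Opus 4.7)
The plan is to show that $\mu_\ell \to 0$ as $\ell \to \infty$. Since the sequence $(\zeta_\ell)$ is uniformly bounded---by the continuity of~$\gg'$, the stability of the discrete dual problem~\eqref{eq:nonlinear:weakform:dual:discrete}, the boundedness of the Galerkin sequence $(\uu_\ell)$, and Theorem~\ref{thm:estimator}---this is sufficient to conclude $\mu_\ell\bigl[\mu_\ell^2 + \zeta_\ell^2\bigr]^{1/2} \to 0$. The argument combines the plain-convergence machinery for multilevel adaptive SGFEM from~\cite{bpr2020+} with a dichotomy inspired by the non-parametric goal-oriented analysis of~\cite{bip2021}.

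The starting point is summability of the Galerkin increments in the energy norm: nestedness $\V_\ell \subseteq \V_{\ell+1}$ and Galerkin orthogonality~\eqref{eq:galerkin-orthogonality} yield the Pythagorean identity $\bnorm{\uu-\uu_\ell}^2 = \bnorm{\uu-\uu_{\ell+1}}^2 + \bnorm{\uu_{\ell+1}-\uu_\ell}^2$ and, upon telescoping, $\sum_{\ell=0}^\infty \bnorm{\uu_{\ell+1} - \uu_\ell}^2 \le \bnorm{\uu - \uu_0}^2 < \infty$. I would then introduce the dichotomy $\N_0 = L_1 \sqcup L_2$, where $L_1 := \{\ell : M_\ell' \le M_\ell''\}$ collects the iterations in which primal Dörfler marking~\eqref{eq:doerfler:primal} is chosen in step~(vi), and $L_2 := \N_0 \setminus L_1$ those in which the combined marking~\eqref{eq:nonlinear:doerfler:dual} is used.

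For $\ell \in L_1$, the marked modes and nodes give rise to test functions of the form $v_0 P_\nu$ (for $\nu \in \MMM_\ell$ and $v_0 \in \X_0$) and $\widehat\varphi_{\ell\nu,\xi} P_\nu$ (for $\xi \in \MM_{\ell\nu}$ and $\nu \in \PPP_\ell$), all of which lie in $\V_{\ell+1}$ by construction. Exploiting the Galerkin orthogonality relation $B(\uu - \uu_\ell, \vv_{\ell+1}) = B(\uu_{\ell+1} - \uu_\ell, \vv_{\ell+1})$ for all $\vv_{\ell+1} \in \V_{\ell+1}$, the orthonormality of $\{P_\nu\}_{\nu \in \III}$ in $L^2_\pi(\Gamma)$, and the $H^1$-stability of the two-level hat-function decomposition, one obtains
\[
 \sum_{\nu \in \MMM_\ell} \mu_\ell(\nu)^2 + \sum_{\nu \in \PPP_\ell}\sum_{\xi \in \MM_{\ell\nu}} \mu_\ell(\nu,\xi)^2 \lesssim \bnorm{\uu_{\ell+1} - \uu_\ell}^2.
\]
Combined with Dörfler~\eqref{eq:doerfler:primal}, this gives $\theta\,\mu_\ell^2 \lesssim \bnorm{\uu_{\ell+1} - \uu_\ell}^2$ for $\ell \in L_1$; summing in~$\ell$ and invoking the telescoping above, we deduce $\sum_{\ell \in L_1} \mu_\ell^2 < \infty$, hence $\mu_\ell \to 0$ along $L_1$.

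An analogous argument on $L_2$---now also testing the dual residual against the same class of basis functions in $\V_{\ell+1}$---produces the bound $\theta\,(\mu_\ell^2 + \zeta_\ell^2) \lesssim \bnorm{\uu_{\ell+1} - \uu_\ell}^2 + \bnorm{\zz_{\ell+1}[\uu_\ell] - \zz_\ell[\uu_\ell]}^2$ on $L_2$. The main technical obstacle is precisely the summability of the dual contribution: because the right-hand side $\gg'(\uu_\ell)$ of~\eqref{eq:nonlinear:weakform:dual:discrete} varies with $\ell$, the standard Pythagorean telescoping does not apply directly to $\zz_{\ell+1}[\uu_{\ell+1}] - \zz_\ell[\uu_\ell]$. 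The resolution is to pass through the auxiliary projection $\zz_{\ell+1}[\uu_\ell] \in \V_{\ell+1}$ (the dual problem with frozen right-hand side $\gg'(\uu_\ell)$), for which Pythagoras along nested spaces applies and produces a telescoping bound for $\sum_\ell \bnorm{\zz_{\ell+1}[\uu_\ell] - \zz_\ell[\uu_\ell]}^2$, and to absorb the perturbation $\bnorm{\zz_{\ell+1}[\uu_\ell] - \zz_{\ell+1}[\uu_{\ell+1}]} \lesssim \bnorm{\uu_\ell - \uu_{\ell+1}}$ afforded by Lemma~\ref{prop:nonlinear:goal:aux:stability} into the already-summable primal increments. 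This yields $\sum_{\ell \in L_2}(\mu_\ell^2 + \zeta_\ell^2) < \infty$, hence $\mu_\ell \to 0$ along $L_2$. Combining with the $L_1$-case gives $\mu_\ell \to 0$ on all of $\N_0$, and together with the uniform bound on $\zeta_\ell$ this concludes the proof.
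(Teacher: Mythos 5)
Your strategy is genuinely different from the paper's: you aim for an estimator-reduction argument (marked indicators controlled by the Galerkin increment $\bnorm{\uu_{\ell+1}-\uu_\ell}$, whose squares are summable by telescoping Pythagoras), whereas the paper runs the Morin--Siebert--Veeser plain-convergence machinery on the limit spaces $\X_{\infty\nu}$, $\V_\infty$ (Propositions~\ref{prop10} and~\ref{prop:conv:spatial}). The critical step of your argument, namely
\[
\sum_{\nu\in\MMM_\ell}\mu_\ell(\nu)^2+\sum_{\nu\in\PPP_\ell}\sum_{\xi\in\MM_{\ell\nu}}\mu_\ell(\nu,\xi)^2\,\lesssim\,\bnorm{\uu_{\ell+1}-\uu_\ell}^2,
\]
is not justified by the ingredients you list, for two reasons. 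First, $\TT_{(\ell+1)\nu}=\refine(\TT_{\ell\nu},\MM_{\ell\nu})$ is only required to contain the marked vertices as vertices; this is weaker than $\widehat\varphi_{\ell\nu,\xi}\in\X_{(\ell+1)\nu}$, since $\widehat\varphi_{\ell\nu,\xi}$ is piecewise affine with respect to the \emph{uniform} refinement $\widehat\TT_{\ell\nu}$ and need not be resolved by the coarsest NVB mesh containing $\xi$ (this is exactly why the paper defines $\TT^{\rm good}_{\ell\nu}$ through membership in the limit space $\X_{\infty\nu}$ rather than in the next space, and the issue is particularly delicate for $d=3$). Without this, the identity $B(\uu-\uu_\ell,\widehat\varphi_{\ell\nu,\xi}P_\nu)=B(\uu_{\ell+1}-\uu_\ell,\widehat\varphi_{\ell\nu,\xi}P_\nu)$ is unavailable. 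Second, even granting it, Galerkin orthogonality, orthonormality of the $P_\nu$ and local stability only yield $\mu_\ell(\nu,\xi)\lesssim\enorm{\uu_{\ell+1}-\uu_\ell}{\omega_{\ell\nu}(\xi)}$ (cf.\ Lemma~\ref{lem:prop:spatial}); this bound has lost all dependence on $\nu$, so summing over $\xi$ and then over $\nu\in\PPP_\ell$ produces a spurious factor $\#\PPP_\ell\to\infty$. What is needed is a $\nu$-resolved discrete efficiency estimate, i.e., a version of the lower bound in Theorem~\ref{thm:estimator} localized to the marked enrichment, whose proof in \cite{bpr2020+} relies on the strengthened Cauchy--Schwarz/two-level-decomposition analysis and the summability of $\|a_m\|_{L^\infty(D)}$. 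This summation over the index set is precisely the difficulty the paper handles with Lemma~\ref{lemma:lebesque} in Step~4 of Proposition~\ref{prop:conv:spatial}.

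A secondary point: on $L_2$ your telescoping for the dual increments is not as innocuous as stated. Pythagoras gives $\bnorm{\zz_{\ell+1}[\uu_\ell]-\zz_\ell[\uu_\ell]}^2=\bnorm{\zz[\uu_\ell]-\zz_\ell[\uu_\ell]}^2-\bnorm{\zz[\uu_\ell]-\zz_{\ell+1}[\uu_\ell]}^2$, and converting the subtracted term into $\bnorm{\zz[\uu_{\ell+1}]-\zz_{\ell+1}[\uu_{\ell+1}]}^2$ via Lemma~\ref{prop:nonlinear:goal:aux:stability} and Young's inequality leaves a remainder $\varepsilon\sum_\ell\bnorm{\zz[\uu_\ell]-\zz_\ell[\uu_\ell]}^2$ that is bounded termwise but not obviously summable, so a genuine quasi-orthogonality argument is required there. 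The paper avoids both difficulties: it only needs $\zeta_\ell$ uniformly bounded (its Step~3) and derives convergence of $\mu_{\ell_k'}$ and of $\big[\mu_{\ell_k''}^2+\zeta_{\ell_k''}^2\big]^{1/2}$ along the two marking subsequences from the a~priori convergence $\uu_\ell\to\uu_\infty$, the good/bad/neither element decomposition, and dominated convergence over the index set.
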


The following result is an immediate consequence of Theorem~\ref{thm:nonlinear:plain_convergence}
and the goal-oriented error estimate~\eqref{eq1:nonlinear:cor:goal} from
Proposition~\ref{prop:eq1:nonlinear:cor:goal}.

\begin{corollary}\label{col:nonlinear:plain_convergence}
Let $d \in \{2, 3\}$.
Suppose that the saturation assumption~\eqref{eq:saturation} holds for both the primal solution $\ww \,{=}\, \uu$~and
the (theoretical) dual solution $\ww = \zz[\uu]$.
Then, for any choice of the marking parameter $0 < \theta \le 1$,
Algorithm~\ref{algorithm}
drives the error in the goal~functional to~zero, i.e.,
\begin{equation*}
 | \gbold(\uu) - \gbold(\uu_\ell) | 
 \le \Crel \, \mu_\ell \big[\mu_\ell^2 + \zeta_\ell^2\big]^{1/2}
 \xrightarrow{\ell \to \infty} 0.
\end{equation*}
\end{corollary}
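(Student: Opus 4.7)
The plan is to split $\N_0$ into the subsequences $A := \{\ell : M'_\ell \le M''_\ell\}$ (where the primal marking is selected in step~(vi) of Algorithm~\ref{algorithm}) and $B := \N_0 \setminus A$ (combined marking), and to show that $\mu_\ell(\mu_\ell^2+\zeta_\ell^2)^{1/2} \to 0$ along each subsequence. I would start from a priori convergence of the discrete primal and dual iterates. Since the multilevel spaces are nested by construction, C\'ea's lemma yields $\uu_\ell \to \uu_\infty$ in $\V$, with $\uu_\infty$ the Galerkin projection of $\uu$ onto $\V_\infty := \overline{\bigcup_\ell \V_\ell}$; in particular $\bnorm{\uu_{\ell+1} - \uu_\ell} \to 0$. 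For the discrete dual iterates $\bar\zz_\ell := \zz_\ell[\uu_\ell]$, I would decompose
\[
 \bar\zz_\ell - \bar\zz_\infty = \bigl(\zz_\ell[\uu_\ell] - \zz_\ell[\uu_\infty]\bigr) + \bigl(\zz_\ell[\uu_\infty] - \bar\zz_\infty\bigr),
\]
with $\bar\zz_\infty$ the $\V_\infty$-Galerkin projection of $\zz[\uu_\infty]$: the first bracket vanishes by Lemma~\ref{prop:nonlinear:goal:aux:stability} applied with $\ww = \uu_\infty$ together with $\uu_\ell \to \uu_\infty$, while the second, a nested-Galerkin error for a fixed right-hand side, goes to zero by C\'ea. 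Consequently $\bnorm{\bar\zz_{\ell+1} - \bar\zz_\ell} \to 0$, and one more application of Lemma~\ref{prop:nonlinear:goal:aux:stability} gives
\[
 \bnorm{\zz_{\ell+1}[\uu_\ell] - \zz_\ell[\uu_\ell]}
 \le \bnorm{\bar\zz_{\ell+1} - \bar\zz_\ell} + \Cgoal \bnorm{\uu_{\ell+1} - \uu_\ell}
 \xrightarrow{\ell\to\infty} 0.
\]

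Next I would establish a \emph{discrete efficiency} estimate for the marked indicators: using Galerkin orthogonality for $\uu_{\ell+1}$ together with the $H^1$-stable two-level decomposition recalled in section~\ref{sec:spatial_discretization}, one obtains
\[
 \sum_{\nu \in \PPP_\ell}\sum_{\xi \in \MM_{\ell\nu}} \mu_\ell(\nu,\xi)^2 + \sum_{\nu \in \MMM_\ell}\mu_\ell(\nu)^2 \le C\,\bnorm{\uu_{\ell+1}-\uu_\ell}^2,
\]
together with the analogous bound for the dual indicators $\zeta_\ell(\cdot)$ whose right-hand side becomes $C\,\bnorm{\zz_{\ell+1}[\uu_\ell] - \zz_\ell[\uu_\ell]}^2$. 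The case analysis then becomes immediate: for $\ell \in A$, the primal D\"orfler property~\eqref{eq:doerfler:primal} combined with discrete efficiency gives $\theta\,\mu_\ell^2 \le C\,\bnorm{\uu_{\ell+1}-\uu_\ell}^2 \to 0$, so $\mu_\ell \to 0$ along $A$; for $\ell \in B$, the combined D\"orfler property~\eqref{eq:nonlinear:doerfler:dual} and discrete efficiency give $\theta(\mu_\ell^2+\zeta_\ell^2) \le C\bigl(\bnorm{\uu_{\ell+1}-\uu_\ell}^2 + \bnorm{\zz_{\ell+1}[\uu_\ell] - \zz_\ell[\uu_\ell]}^2\bigr) \to 0$, so both $\mu_\ell, \zeta_\ell \to 0$ along $B$.

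To conclude, I would observe that $\zeta_\ell$ is uniformly bounded: $\bar\zz_\ell$ is bounded in $\V$ by continuity of $\gg'$ and boundedness of $\uu_\ell$, and Theorem~\ref{thm:estimator} transfers this to $\zeta_\ell$. Hence along $A$ the product $\mu_\ell(\mu_\ell^2+\zeta_\ell^2)^{1/2}$ tends to zero because $\mu_\ell \to 0$ and $\zeta_\ell$ stays bounded, while along $B$ both factors tend to zero; since $\N_0 = A \cup B$, the claim follows. The hard part will be the discrete efficiency estimate above: for the primal problem it is essentially a by-product of the techniques developed in~\cite{bpr2020+, bpr2021+}, but its transfer to the dual problem---whose data $\gg'(\uu_\ell)$ depends on the varying $\uu_\ell$---relies crucially on the Lipschitz-type dependence established in Lemma~\ref{prop:nonlinear:goal:aux:stability}.
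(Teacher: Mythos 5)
The corollary itself is obtained in the paper by simply combining Proposition~\ref{prop:eq1:nonlinear:cor:goal} (which gives the inequality $|\gg(\uu)-\gg(\uu_\ell)|\le\Crel\,\mu_\ell\big[\mu_\ell^2+\zeta_\ell^2\big]^{1/2}$ under the saturation assumptions) with Theorem~\ref{thm:nonlinear:plain_convergence} (which gives convergence of the estimator product without any saturation assumption). Your proposal never invokes the reliability bound, so it addresses only the convergence half of the statement; in effect you are re-proving Theorem~\ref{thm:nonlinear:plain_convergence}, and that is where the argument breaks.

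The gap is the claimed discrete efficiency estimate $\sum_{\nu\in\PPP_\ell}\sum_{\xi\in\MM_{\ell\nu}}\mu_\ell(\nu,\xi)^2+\sum_{\nu\in\MMM_\ell}\mu_\ell(\nu)^2\le C\,\bnorm{\uu_{\ell+1}-\uu_\ell}^2$. Its parametric part is fine: for $\nu\in\MMM_\ell$ one has $\X_0\otimes{\rm span}\{P_\nu\}\subseteq\V_{\ell+1}$, so Galerkin orthogonality lets you replace $\uu$ by $\uu_{\ell+1}$ in~\eqref{eq1:parametric-error-estimate:b}. The spatial part fails: marking $\xi\in\MM_{\ell\nu}$ only guarantees that $\xi$ is a vertex of $\TT_{\ell+1,\nu}=\refine(\TT_{\ell\nu},\MM_{\ell\nu})$, which is the \emph{coarsest} such refinement; it does not give $\widehat\varphi_{\ell\nu,\xi}\in\X_{\ell+1,\nu}$, because $\widehat\varphi_{\ell\nu,\xi}$ is piecewise affine with respect to the full uniform refinement $\widehat\TT_{\ell\nu}$ on the entire patch of $\xi$, and that patch is not resolved by the few bisections needed to create the vertex $\xi$ (already for a single triangle in 2D, the uniform-refinement hat function of an edge midpoint is not affine on the two bisected halves). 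Hence $\widehat\varphi_{\ell\nu,\xi}P_\nu\notin\V_{\ell+1}$ in general, Galerkin orthogonality for $\uu_{\ell+1}$ cannot be applied to the two-level indicator~\eqref{eq1:spatial-error-estimate}, and the bound by $\bnorm{\uu_{\ell+1}-\uu_\ell}$ is unavailable. This is the same obstruction as the missing interior-node property in deterministic AFEM, and it is exactly why the paper does not argue via consecutive Galerkin differences but runs the Morin--Siebert--Veeser machinery of Proposition~\ref{prop:conv:spatial}: the condition $\widehat\varphi_{\ell\nu,\xi}\in\X_{\infty\nu}$ holds only asymptotically on the ``good'' elements, the never-refined ``bad'' elements need a separate dominated-convergence argument, and Lemma~\ref{lemma:lebesque} is required to sum over the multi-indices. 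Your a~priori convergence of the iterates, the treatment of the $\uu_\ell$-dependence of the dual data via Lemma~\ref{prop:nonlinear:goal:aux:stability}, the uniform boundedness of $\zeta_\ell$, and the final case split over the two marking regimes are all sound and mirror Steps~1 and 3--5 of the paper's proof; it is the spatial discrete lower bound with respect to one refinement step that does not hold as stated.
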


The proof of Theorem~\ref{thm:nonlinear:plain_convergence}
exploits the ideas from our own work~\cite{bprr18++}
on the convergence of adaptive \emph{single-level} SGFEM.
In the \emph{multilevel} framework for goal-oriented adaptivity, as considered in the present work,
the analysis needs to account for two distinctive aspects:
(i)~different spatial coefficients in the finite gPC-expansion (that represents the SGFEM solution)
may reside in different finite element spaces,
and
(ii)~the structure of the goal-oriented adaptive SGFEM
algorithm is inherently nonlinear
(due to the error bound being the product of two error estimates).
Therefore, we include full details of analysis where it addresses these two aspects
(cf. Proposition~\ref{prop:conv:spatial} and the proof of Theorem~\ref{thm:nonlinear:plain_convergence} below),
while referring to~\cite{bprr18++} for results that carry over from the single-level SGFEM~setting.

The first lemma is an early result from~\cite{bv84},
which proves that adaptive algorithms (without coarsening) always lead to convergence of the discrete solutions.

\begin{lemma}[{\textsl{a~priori} convergence; see, e.g., ~\cite[Lemma~13]{bprr18++}}]\label{lemma:apriori}
Let $V$ be a Hilbert space. Let $a: V \times V \to \R$ be an elliptic and continuous bilinear form.
Let $F \in V^*$ be a bounded linear functional.
For each $\ell \in \N_0$, let $V_\ell \subseteq V$ be a closed subspace such that $V_\ell \subseteq V_{\ell+1}$.
Furthermore, define the limiting space $V_\infty := \overline{\bigcup_{\ell = 0}^\infty V_\ell} \subseteq V$.
Then, for all $\ell \in \N_0 \cup \{\infty\}$, there exists a unique Galerkin solution $u_\ell \in V_\ell$ satisfying
\begin{equation*} 
 a(u_\ell, v_\ell) = F(v_\ell) 
 \quad \text{for all } v_\ell \in V_\ell.
\end{equation*}
Moreover, there holds
$\lim\limits_{\ell \to \infty} \norm{u_\infty - u_\ell}{V} = 0$.
\qed
\end{lemma}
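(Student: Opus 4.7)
The plan is to reduce the statement to the classical combination of C\'ea's quasi-best approximation estimate with the density of the nested union $\bigcup_{\ell \ge 0} V_\ell$ inside $V_\infty$.

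For the existence and uniqueness part, I would simply observe that each $V_\ell$ (and likewise $V_\infty$, closed by construction) is itself a Hilbert space under the inherited inner product, that the restriction of $a$ to $V_\ell \times V_\ell$ inherits both the ellipticity and the continuity constants, and that $F$ restricts to a bounded linear functional on $V_\ell$. A direct application of the Lax--Milgram lemma then produces the unique Galerkin solution $u_\ell \in V_\ell$ for every $\ell \in \N_0 \cup \{\infty\}$.

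For the convergence $\norm{u_\infty - u_\ell}{V} \to 0$, my first step is to derive a C\'ea-type quasi-best approximation bound. Since $V_\ell \subseteq V_\infty$, subtracting the variational identities for $u_\infty$ and $u_\ell$ and testing against $v_\ell \in V_\ell$ yields the Galerkin orthogonality $a(u_\infty - u_\ell, v_\ell) = 0$. Denoting by $\alpha > 0$ and $C > 0$ the ellipticity and continuity constants of $a$, respectively, I would then compute, for any $v_\ell \in V_\ell$,
\begin{equation*}
 \alpha \, \norm{u_\infty - u_\ell}{V}^2
 \le a(u_\infty - u_\ell, u_\infty - u_\ell)
 = a(u_\infty - u_\ell, u_\infty - v_\ell)
 \le C \, \norm{u_\infty - u_\ell}{V} \, \norm{u_\infty - v_\ell}{V},
\end{equation*}
from which $\norm{u_\infty - u_\ell}{V} \le (C/\alpha) \, \inf_{v_\ell \in V_\ell} \norm{u_\infty - v_\ell}{V}$.

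The second step exploits density: by definition $V_\infty = \overline{\bigcup_{\ell} V_\ell}$, so for every $\eps > 0$ one can pick an index $\ell_0$ and some $w \in V_{\ell_0}$ with $\norm{u_\infty - w}{V} < \eps$. Nestedness $V_{\ell_0} \subseteq V_\ell$ for $\ell \ge \ell_0$ allows $w$ as a competitor in the infimum, so the C\'ea estimate delivers $\norm{u_\infty - u_\ell}{V} < (C/\alpha)\,\eps$ for all such $\ell$. Since $\eps > 0$ was arbitrary, this proves $u_\ell \to u_\infty$ in~$V$. The argument contains no real obstacle and follows a completely classical pattern going back to~\cite{bv84}; the only mildly subtle point is that $a$ is not assumed symmetric, so convergence does not follow directly from an orthogonal projection/Pythagoras identity, and C\'ea's lemma is precisely the ingredient that compensates for the lack of symmetry at the price of the multiplicative constant~$C/\alpha$.
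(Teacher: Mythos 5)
Your proof is correct and is essentially the standard argument that the paper delegates to the cited reference (Lax--Milgram for existence and uniqueness on each closed subspace, Galerkin orthogonality giving C\'ea's quasi-best approximation bound, and density of $\bigcup_{\ell} V_\ell$ in $V_\infty$ combined with nestedness to conclude $u_\ell \to u_\infty$). No gaps; the remark about C\'ea's lemma compensating for the possible lack of symmetry is exactly the right point.
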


We will exploit Lemma~\ref{lemma:apriori} for the limiting multilevel space
$\V_\infty = \overline{\bigcup_{\ell=0}^\infty \V_\ell}$
as well as for the limiting finite element spaces
$\X_{\infty\nu} := \overline{\bigcup_{\ell = 0}^\infty \X_{\ell\nu}}$, $\nu \in \III$,
which are well-defined with the understanding that $\X_{\ell\nu} = \{0\}$ for $\nu \in \III \setminus \PPP_\ell$.

The next proposition replicates Proposition~10 in~\cite{bprr18++};
it states that the parametric enrichment satisfying the D{\"o}rfler marking criterion along a subsequence
guarantees convergence of the whole sequence of parametric error estimates.
The proof is independent of the structure of the underlying finite element spaces and, therefore,
carries over from~\cite{bprr18++} without changes.

\begin{proposition}\label{prop10}
Let 
$\rho_\ell(\nu) \in \Big\{ \mu_\ell(\nu),\, \big(\mu_\ell(\nu)^2 + \zeta_\ell(\nu)^2\big)^{1/2} \Big\}$
for each $\nu \in \PPP_\ell$ ($\ell \in \N_0$).
Let $0 < \vartheta \le 1$.
Suppose that Algorithm~\ref{algorithm}
yields a subsequence $(\ell_k)_{k \in \N_0}$ such that 
\begin{equation} \label{eq:doerfler:vartheta:parametric}
   \vartheta \sum_{\nu \in \QQQ_{\ell_k}} \rho_{\ell_k}(\nu)^2
   \le \sum_{\nu \in \MMM_{\ell_k}} \rho_{\ell_k}(\nu)^2.
\end{equation}
Then, there holds convergence
$\displaystyle \sum_{\nu \in \QQQ_{\ell}} \rho_{\ell}(\nu)^2 \xrightarrow{\ell \to \infty} 0$.
\qed
\end{proposition}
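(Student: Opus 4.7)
The plan is to follow the blueprint of Proposition~10 in \cite{bprr18++}, which reduces the convergence of the parametric estimator tail to a priori convergence of the Galerkin iterates combined with an orthogonality argument in the limiting space $\V_\infty := \overline{\bigcup_\ell \V_\ell}$. Since the parametric indicators $\rho_\ell(\nu)$ for $\nu \in \QQQ_\ell$ involve only test functions of the form $v_0 P_\nu$ with $v_0 \in \X_0$, the spatial discretization enters the argument exclusively through the Galerkin iterates $\uu_\ell$ and $\zz_\ell[\uu_\ell]$, so the single-level reasoning transfers verbatim once a priori convergence of these iterates is established.

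\emph{Step~1 (a priori convergence).} Lemma~\ref{lemma:apriori} applied on the nested sequence $\V_\ell \subseteq \V_{\ell+1}$ yields $\uu_\infty \in \V_\infty$ with $\bnorm{\uu_\ell - \uu_\infty} \to 0$. For the dual, I apply the same lemma to the problem with frozen right-hand side $\gg'(\uu_\infty)$ to obtain a limit $\zz_\infty \in \V_\infty$, and then absorb the perturbation caused by using $\gg'(\uu_\ell)$ in~\eqref{eq:nonlinear:weakform:dual:discrete} via the continuity estimate of Lemma~\ref{prop:nonlinear:goal:aux:stability} and the triangle inequality, yielding $\bnorm{\zz_\ell[\uu_\ell] - \zz_\infty} \to 0$.

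\emph{Step~2 (vanishing over $\PPP_\infty := \bigcup_\ell \PPP_\ell$).} The central estimate is $\sum_{\nu \in \PPP_\infty} \rho_\ell(\nu)^2 \to 0$. The key observation is that for every $\nu \in \PPP_\infty$ there is some $\ell^*$ with $\nu \in \PPP_{\ell^*}$, so $\X_0 \subseteq \X_{\ell'\nu}$ for $\ell' \ge \ell^*$, and hence any $v_0 P_\nu$ with $v_0 \in \X_0$ belongs to $\V_\infty$. For any truncation $N \in \N$, the finite superposition $\ee_{\ell,N} := \sum_{\nu \in \PPP_\infty,\, |\nu|_1 \le N} e_{\uu|\ell\nu} P_\nu$ therefore lies in $\V_\infty$. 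Combining the defining identity~\eqref{eq1:parametric-error-estimate:b}, the Galerkin orthogonality $B(\uu - \uu_\infty, \ee_{\ell,N}) = 0$, orthonormality of $\{P_\nu\}$, and the norm equivalence~\eqref{eq:lambda} gives
\begin{equation*}
 \sum_{\nu \in \PPP_\infty,\, |\nu|_1 \le N} \mu_\ell(\nu)^2 \,=\, B(\uu_\infty - \uu_\ell, \ee_{\ell,N}) \,\le\, \Lambda^{1/2} \bnorm{\uu_\infty - \uu_\ell} \biggl(\sum_{\nu \in \PPP_\infty,\, |\nu|_1 \le N} \mu_\ell(\nu)^2\biggr)^{\!1/2},
\end{equation*}
hence $\sum_{\nu \in \PPP_\infty} \mu_\ell(\nu)^2 \le \Lambda \, \bnorm{\uu_\infty - \uu_\ell}^2 \to 0$. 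The analogous computation for the dual, now splitting $\zz[\uu_\ell] - \zz_\ell[\uu_\ell] = (\zz[\uu_\ell] - \zz[\uu_\infty]) + (\zz[\uu_\infty] - \zz_\infty) + (\zz_\infty - \zz_\ell[\uu_\ell])$ and invoking the Galerkin orthogonality of $\zz_\infty$ on the middle summand, yields $\sum_{\nu \in \PPP_\infty} \zeta_\ell(\nu)^2 \to 0$, and thus the same conclusion for $\rho_\ell(\nu)^2 \le \mu_\ell(\nu)^2 + \zeta_\ell(\nu)^2$.

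\emph{Step~3 (using D\"orfler).} Since $\MMM_{\ell_k} \subseteq \PPP_{\ell_k+1} \subseteq \PPP_\infty$, the marking hypothesis~\eqref{eq:doerfler:vartheta:parametric} together with Step~2 yields along the subsequence $(\ell_k)$
\begin{equation*}
 \sum_{\nu \in \QQQ_{\ell_k}} \rho_{\ell_k}(\nu)^2 \,\le\, \vartheta^{-1} \sum_{\nu \in \MMM_{\ell_k}} \rho_{\ell_k}(\nu)^2 \,\le\, \vartheta^{-1} \sum_{\nu \in \PPP_\infty} \rho_{\ell_k}(\nu)^2 \,\xrightarrow{k\to\infty}\, 0.
\end{equation*}
Upgrading this subsequence convergence to the full sequence $\ell \to \infty$ exploits the minimality of the D\"orfler set: no index in $\QQQ_\ell \setminus \PPP_\infty$ is ever marked (otherwise it would enter $\PPP_\infty$), so by the minimality of $|\MMM_\ell'|$ and $|\MMM_\ell''|$ its indicator is dominated by the marking threshold, which is itself driven to zero by Step~2. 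This promotion is the technical heart of the argument and the main obstacle, and it proceeds exactly as in~\cite[Proposition~10]{bprr18++}; everything else — in particular the construction of $\ee_{\ell,N} \in \V_\infty$ via the inclusion $\X_0 \subseteq \X_{\ell\nu}$ for $\nu \in \PPP_\ell$ — is precisely the adaptation needed to move from the single-level setting of~\cite{bprr18++} to the present multilevel~one.
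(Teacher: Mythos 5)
Your Steps~1--2 and the subsequence part of Step~3 are sound, and they correctly isolate the only genuinely multilevel ingredient: since $\TT_{\ell\nu} \in \refine(\TT_0)$ for every active index, $\X_0 \subseteq \X_{\ell\nu}$ and hence $\X_0 \otimes {\rm span}\{P_\nu\} \subseteq \V_\infty$ for all $\nu \in \PPP_\infty$, which makes your test function $\ee_{\ell,N}$ admissible; this is consistent with the paper, whose own ``proof'' is simply the observation that \cite[Proposition~10]{bprr18++} transfers unchanged. The genuine gap is the promotion from the subsequence $(\ell_k)$ to the full sequence, which the stated conclusion requires. The mechanism you sketch --- minimality of the cardinality of the D\"orfler set forcing the indicator of a never-marked index to be ``dominated by the marking threshold'' --- does not work: a minimal-cardinality set satisfying \eqref{eq:doerfler:primal} or \eqref{eq:nonlinear:doerfler:dual} need not consist of the largest indicators, so no domination of unmarked by marked indicators follows; on iterations outside $(\ell_k)$ the algorithm may satisfy the marking criterion essentially through spatial indicators alone, so there is no parametric threshold to invoke; and even pointwise domination would not control the \emph{sum} over $\QQQ_\ell \setminus \PPP_\infty$.

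The argument that actually closes this step in \cite{bprr18++} uses no minimality at all. One defines limiting indicators $\rho_\infty(\nu)$ from $\uu_\infty$ and $\zz_\infty[\uu_\infty]$ via the local problems \eqref{eq1:parametric-error-estimate:b}, and repeats your Step~2 computation for the \emph{differences} of the local solutions to obtain the $\ell^2(\III)$-stability $\sum_{\nu \in \III} \lvert \rho_\ell(\nu) - \rho_\infty(\nu)\rvert^2 \lesssim \bnorm{\uu_\infty - \uu_\ell}^2 + \bnorm{\zz_\infty[\uu_\infty] - \zz_\ell[\uu_\ell]}^2 \to 0$. Since $\rho_\infty(\nu) = 0$ for $\nu \in \PPP_\infty$, the subsequence D\"orfler bound \eqref{eq:doerfler:vartheta:parametric} (with $\MMM_{\ell_k} \subseteq \PPP_\infty$) gives $\sum_{\nu \in \QQQ_{\ell_k}} \rho_{\ell_k}(\nu)^2 \to 0$; because an index in $\QQQ_\ell \setminus \PPP_\infty$ remains in every later detail set, this forces $\rho_\infty(\nu) = 0$ also for all such indices, and then the full-sequence convergence follows from the $\ell^2$-stability alone. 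As written, your proof rigorously yields only $\sum_{\nu \in \QQQ_{\ell_k}} \rho_{\ell_k}(\nu)^2 \to 0$ along the marked subsequence (which is what the proof of Theorem~\ref{thm:nonlinear:plain_convergence} actually uses), but not the full-sequence statement claimed by the proposition; the part you defer to \cite{bprr18++} is precisely this missing piece, and the heuristic you offer for it is incorrect.
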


To prove a convergence result for the spatial contributions of error estimates,
we will use the following notation:
For $\omega \subset D$, we define
\begin{equation*}
 B_\omega(\vv,\ww) := \int_{\Gamma} \int_\omega a_0 \nabla \vv\cdot\nabla\ww \, \dx \, \dpi(\y)
 + \sum_{m=1}^\infty \int_{\Gamma} \int_\omega y_m a_m \nabla \vv\cdot\nabla\ww \, \dx \, \dpi(\y)
 \text{ for } \vv, \ww \in \V.
\end{equation*}
Note that $B_\omega(\cdot,\cdot)$ is symmetric, bilinear, and positive semidefinite.
We denote by $\enorm{\vv}{\omega} := B_\omega(\vv,\vv)^{1/2}$ the corresponding seminorm.
The following lemma is an analogue of Lemma~16 in~\cite{bprr18++}.
Since the result is formulated for individual indices $\nu \in \PPP_\ell$, the proof carries over from~\cite{bprr18++}
without significant modifications.

\begin{lemma} \label{lem:prop:spatial}
Let $\nu \in \PPP_\ell$, $\xi \in \NN_{\ell\nu}^+$ and denote by
$\omega_{\ell\nu}(\xi) := \bigcup \{T \in \TT_{\ell\nu} : \xi \in T \}$ the associated vertex patch.
Then, the following estimates hold:
\begin{subequations} \label{eq1:msv}
   \begin{equation} \label{eq1a:msv}
      \mu_\ell(\nu,\xi) \leq C \enorm{\uu - \uu_\ell}{\omega_{\ell\nu}(\xi)},
   \end{equation}
   \begin{equation} \label{eq1b:msv}
      \mu_\ell(\nu,\xi)^2 + \zeta_\ell(\nu,\xi)^2 \leq C
      \Big( \enorm{\uu - \uu_\ell}{\omega_{\ell\nu}(\xi)}^2 + \enorm{\zz[\uu_\ell] - \zz_\ell[\uu_\ell]}{\omega_{\ell\nu}(\xi)}^2 \Big).
   \end{equation}
\end{subequations}
Furthermore, let $\uu_\infty \in \V$ (resp., $\zz_\infty[\uu_k] \in \V$ for $k \in \N_0$) be the limit of
$(\uu_\ell)_{\ell \in \N_0}$ (resp.,  $(\zz_\ell[\uu_k])_{\ell \in \N_0}$) guaranteed by Lemma~\ref{lemma:apriori}.
If $\widehat\varphi_{\ell\nu,\xi} \in \X_{\infty\nu}$, then there hold
\begin{subequations} \label{eq2:msv}
   \begin{equation} \label{eq2a:msv}
      \tau_\ell(\uu|\nu,\xi) = \mu_\ell(\nu,\xi) \leq C \enorm{\uu_\infty - \uu_\ell}{\omega_{\ell\nu}(\xi)},
   \end{equation}
   \begin{equation} \label{eq2b:msv}
      \tau_\ell(\uu|\nu,\xi)^2 + \tau_\ell(\zz[\uu_k]|\nu,\xi)^2 \leq C
      \Big( \enorm{\uu_\infty - \uu_\ell}{\omega_{\ell\nu}(\xi)}^2 +
      \enorm{\zz_\infty[\uu_k] - \zz_\ell[\uu_k]}{\omega_{\ell\nu}(\xi)}^2 \Big).
   \end{equation}
\end{subequations}
The constant $C>0$ in~\eqref{eq1:msv} and~\eqref{eq2:msv} depends only on $a_0$ and $\tau$.
\qed
\end{lemma}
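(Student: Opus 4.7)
The plan is to treat all four estimates by a single localization argument, followed by Cauchy--Schwarz on the positive semidefinite form $B_\omega(\cdot,\cdot)$ and, for the limit version, an insertion of Galerkin orthogonality. First I would exploit that the hat function $\widehat\varphi_{\ell\nu,\xi}$ vanishes outside $\omega_{\ell\nu}(\xi)$, so that for any admissible $\ww \in \V$ one has $B(\ww - \ww_\ell, \widehat\varphi_{\ell\nu,\xi} P_\nu) = B_{\omega_{\ell\nu}(\xi)}(\ww - \ww_\ell, \widehat\varphi_{\ell\nu,\xi} P_\nu)$; the corresponding dual identity, which covers $\zeta_\ell(\nu,\xi)$, then follows from the symmetry of $B$. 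Since $B_\omega$ is positive semidefinite, Cauchy--Schwarz gives
\begin{equation*}
 \bigl\lvert B_{\omega_{\ell\nu}(\xi)}(\ww - \ww_\ell, \widehat\varphi_{\ell\nu,\xi} P_\nu) \bigr\rvert
 \le \enorm{\ww - \ww_\ell}{\omega_{\ell\nu}(\xi)}\, \enorm{\widehat\varphi_{\ell\nu,\xi} P_\nu}{\omega_{\ell\nu}(\xi)}.
\end{equation*}

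Next, I would bound the second factor on the right. Expanding the definition of $B_\omega$, using $\bigl\lvert\sum_{m=1}^\infty y_m a_m\bigr\rvert \le \tau\, a_0$ pointwise a.e.\ (from~\eqref{eq3:a}) together with the orthonormality $\int_\Gamma P_\nu^2 \dpi = 1$, one finds
\begin{equation*}
 \enorm{\widehat\varphi_{\ell\nu,\xi} P_\nu}{\omega_{\ell\nu}(\xi)}^2
 \le (1+\tau)\, \norm{\widehat\varphi_{\ell\nu,\xi}}{\omega_{\ell\nu}(\xi)}^2
 = (1+\tau)\, \norm{\widehat\varphi_{\ell\nu,\xi}}{D}^2,
\end{equation*}
where the last equality uses once more the support of the hat function. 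Dividing the previous display by $\norm{\widehat\varphi_{\ell\nu,\xi}}{D}$ and specialising to $\ww = \uu$ yields~\eqref{eq1a:msv}; the analogous bound for $\zeta_\ell(\nu,\xi)$ comes from the same argument with $\ww = \zz[\uu_\ell]$, and squaring and adding gives~\eqref{eq1b:msv}, with a constant depending only on $\tau$ and (through $\enorm{\cdot}{\omega}$) on $a_0$.

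Finally, if $\widehat\varphi_{\ell\nu,\xi} \in \X_{\infty\nu}$, then $\widehat\varphi_{\ell\nu,\xi} P_\nu \in \V_\infty$. By Lemma~\ref{lemma:apriori} applied to the primal problem and, for fixed $k$, to the dual problem with right-hand side $\gg'(\uu_k)$, there exist Galerkin limits $\uu_\infty \in \V_\infty$ and $\zz_\infty[\uu_k] \in \V_\infty$ satisfying $B(\uu - \uu_\infty, \vv_\infty) = 0$ and $B(\vv_\infty, \zz[\uu_k] - \zz_\infty[\uu_k]) = 0$ for all $\vv_\infty \in \V_\infty$. Choosing $\vv_\infty := \widehat\varphi_{\ell\nu,\xi} P_\nu$ lets me replace $\uu$ by $\uu_\infty$ in the numerator of $\tau_\ell(\uu|\nu,\xi)$ and $\zz[\uu_k]$ by $\zz_\infty[\uu_k]$ in that of $\tau_\ell(\zz[\uu_k]|\nu,\xi)$ without changing their values, after which the first two steps go through verbatim and produce~\eqref{eq2:msv}. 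I expect no serious obstacle: the argument is essentially the single-level proof of~\cite[Lemma~16]{bprr18++}, and the only multilevel-specific subtlety is to verify that the global denominator $\norm{\widehat\varphi_{\ell\nu,\xi}}{D}$ appearing in~\eqref{eq1:spatial-error-estimate} agrees with its patch-local value, which is immediate from the support of the hat function.
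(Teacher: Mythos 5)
Your proof is correct and is exactly the argument the paper has in mind: the paper omits the proof of this lemma, stating that it carries over from the single-level case (Lemma~16 of~\cite{bprr18++}), and your localization via $\supp\widehat\varphi_{\ell\nu,\xi}\subseteq\omega_{\ell\nu}(\xi)$, Cauchy--Schwarz for the positive semidefinite form $B_\omega$, the bound $\enorm{\widehat\varphi_{\ell\nu,\xi}P_\nu}{\omega_{\ell\nu}(\xi)}^2\le(1+\tau)\norm{\widehat\varphi_{\ell\nu,\xi}}{D}^2$, and the insertion of Galerkin orthogonality in $\V_\infty$ (using $\widehat\varphi_{\ell\nu,\xi}P_\nu\in\V_\infty$) for~\eqref{eq2:msv} is precisely that argument, correctly adapted to the index-wise multilevel spaces.
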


While Lemma~\ref{lem:prop:spatial} holds for each index $\nu \in \PPP_\ell$, its application in the convergence proof for
spatial error estimates in the multilevel setting will require
the following elementary lemma, which formulates a generalized dominated convergence result for sequences.
\ifsisc
The proof is elementary and is left to the reader.
\else
For convenience of the reader, we include a simple proof in Appendix~\ref{sec:appendix_a}.
\fi

\begin{lemma}\label{lemma:lebesque}
Let $(\alpha_n)_{n \in \N}, (\beta_n)_{n \in \N} \subset \R$ with
$\sum_{n = 1}^\infty |\beta_n| < \infty$.
Let $C > 0$.
For $k \in \N_0$, let $(\alpha_n^{(k)})_{n \in \N}, (\beta_n^{(k)})_{n \in \N} \subset \R$
with $|\alpha_n^{(k)}| \le C \, |\beta_n^{(k)}|$ and $\alpha_n^{(k)} \to \alpha_n$ as $k \to \infty$, for all $n \in \N$.
Then, the convergence $\sum_{n = 1}^\infty |\beta_n - \beta_n^{(k)}| \to 0$ as $k \to \infty$
implies that $\sum_{n = 1}^\infty |\alpha_n| < \infty$ and
$\sum_{n = 1}^\infty |\alpha_n - \alpha_n^{(k)}| \to 0$ as $k \to \infty$.
\ifsisc
\qed
%
\fi
\end{lemma}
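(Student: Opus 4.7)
The plan is to treat Lemma~\ref{lemma:lebesque} as a variant of Lebesgue's dominated convergence theorem phrased for series on $\N$, with the ``dominating function'' $C|\beta_n|$ itself being approximated in $\ell^1$ by $C|\beta_n^{(k)}|$. The proof will proceed in two stages: first I would establish that the limit sequence $(\alpha_n)$ is dominated by $C|\beta_n|$ and hence lies in $\ell^1(\N)$, and second I would prove the $\ell^1$-convergence $\sum_n |\alpha_n - \alpha_n^{(k)}| \to 0$ by an $\eps/3$-style head-vs-tail split.

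For the first stage, I would note that the hypothesis $\sum_{n=1}^\infty |\beta_n - \beta_n^{(k)}| \to 0$ in particular implies pointwise convergence $\beta_n^{(k)} \to \beta_n$ for each $n \in \N$. Combined with $\alpha_n^{(k)} \to \alpha_n$ and the pointwise bound $|\alpha_n^{(k)}| \le C |\beta_n^{(k)}|$, passing to the limit in $k$ yields the inherited bound $|\alpha_n| \le C |\beta_n|$ for every $n$. Since $\sum_n |\beta_n| < \infty$, this gives $\sum_n |\alpha_n| \le C \sum_n |\beta_n| < \infty$, which is the first assertion.

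For the second stage I would fix $\eps > 0$ and, exploiting $\sum_n |\beta_n| < \infty$, choose $N \in \N$ such that the tail satisfies $2C \sum_{n>N} |\beta_n| < \eps/2$. Then for any $k \in \N_0$ I would split
\begin{equation*}
   \sum_{n=1}^\infty |\alpha_n - \alpha_n^{(k)}|
   = \sum_{n=1}^{N} |\alpha_n - \alpha_n^{(k)}|
   + \sum_{n > N} |\alpha_n - \alpha_n^{(k)}|.
\end{equation*}
The finite head tends to $0$ as $k \to \infty$ by the pointwise convergence $\alpha_n^{(k)} \to \alpha_n$. For the tail, the two bounds $|\alpha_n| \le C |\beta_n|$ and $|\alpha_n^{(k)}| \le C |\beta_n^{(k)}|$ combined with the triangle inequality $|\beta_n^{(k)}| \le |\beta_n| + |\beta_n - \beta_n^{(k)}|$ give
\begin{equation*}
   \sum_{n > N} |\alpha_n - \alpha_n^{(k)}|
   \le 2C \sum_{n > N} |\beta_n| + C \sum_{n=1}^\infty |\beta_n - \beta_n^{(k)}|
   < \eps/2 + C \sum_{n=1}^\infty |\beta_n - \beta_n^{(k)}|.
\end{equation*}
Choosing $k$ large enough that both the finite head is less than $\eps/4$ and $C \sum_n |\beta_n - \beta_n^{(k)}| < \eps/4$ completes the argument.

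I expect no real obstacle here; the only subtle point is recognising that the uniform-in-$k$ control of the tail requires adding and subtracting $|\beta_n|$ inside $|\beta_n^{(k)}|$ so that the summable function $|\beta_n|$ does the dominating, while the $\ell^1$-convergence hypothesis absorbs the perturbation. This is why the statement is phrased for two sequences of sequences rather than a single fixed dominating sequence: the $\beta_n^{(k)}$-bound on $\alpha_n^{(k)}$ is \emph{a~priori} only valid for each $k$, and the limit bound is recovered through pointwise convergence.
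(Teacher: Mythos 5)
Your proof is correct and follows essentially the same strategy as the paper's: establish summability of $(\alpha_n)$ first, then prove the $\ell^1$-convergence by a head-vs-tail split in which the tail of $|\beta_n^{(k)}|$ is controlled by adding and subtracting $|\beta_n|$ and absorbing the perturbation into the $\ell^1$-convergence hypothesis. The only (harmless) difference is that you extract the pointwise bound $|\alpha_n| \le C\,|\beta_n|$ by passing to the limit in $k$, which slightly streamlines the first step compared with the paper's finite-partial-sum argument.
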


With Lemmas~\ref{lem:prop:spatial} and~\ref{lemma:lebesque} at hand,
we can extend the result
established in~\cite[Proposition~11]{bprr18++} for single-level SGFEM to the multilevel~setting.

\begin{proposition}\label{prop:conv:spatial}
Let $0 < \vartheta \le 1$.
Let $\rho_\ell(\nu,\xi) \in \Big\{ \mu_\ell(\nu,\xi),\, \big(\mu_\ell(\nu,\xi)^2 + \zeta_\ell(\nu,\xi)^2\big)^{1/2} \Big\}$
for each $\nu \in \PPP_\ell$ and $\xi \in \NN_{\ell\nu}^+$ ($\ell \in \N_0$).
Suppose that Algorithm~\ref{algorithm}
yields a subsequence $(\ell_k)_{k \in \N_0}$ such that 
\begin{equation}\label{eq1:prop:conv:spatial}
 \vartheta \sum_{\nu \in \PPP_{\ell_k}} \sum_{\xi \in \NN_{\ell_k\nu}^+} \rho_{\ell_k}(\nu,\xi)^2
 \le \sum_{\nu \in \PPP_{\ell_k}} \sum_{\nu \in \MM_{\ell_k\nu}} \rho_{\ell_k}(\nu,\xi)^2.
\end{equation}
Then, there holds convergence
$\displaystyle \sum_{\nu \in \PPP_{\ell_k}} \sum_{\xi \in \NN_{\ell_k\nu}^+} \rho_{\ell_k}(\nu,\xi)^2 \xrightarrow{k \to \infty} 0$.
\end{proposition}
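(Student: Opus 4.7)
My plan is to adapt \cite[Proposition~11]{bprr18++}, the corresponding single-level SGFEM result, to the present multilevel framework while accommodating the nonlinear dependence of the dual estimator $\zeta_\ell$ on the primal iterate $\uu_\ell$. The starting point is an application of Lemma~\ref{lemma:apriori} to the bilinear form $B(\cdot,\cdot)$ on $\V$ with the nested subspaces $\V_\ell \subseteq \V_{\ell+1} \subseteq \V$, which yields a unique $\uu_\infty \in \V_\infty := \overline{\bigcup_\ell \V_\ell}$ with $\bnorm{\uu_\ell - \uu_\infty} \to 0$. For each fixed $k \in \N_0$, the same lemma applied to~\eqref{eq:nonlinear:weakform:dual:discrete} with the frozen right-hand side $\gg'(\uu_k)$ produces an auxiliary limit $\zz_\infty[\uu_k]$ with $\bnorm{\zz_\ell[\uu_k] - \zz_\infty[\uu_k]} \to 0$ as $\ell \to \infty$. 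The D\"orfler-type hypothesis~\eqref{eq1:prop:conv:spatial} then reduces the claim to showing $\sum_{\nu \in \PPP_{\ell_k}} \sum_{\xi \in \MM_{\ell_k\nu}} \rho_{\ell_k}(\nu,\xi)^2 \to 0$ as $k \to \infty$.

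Next, I argue that each marked hat function lies in the limit space. For $\xi \in \MM_{\ell_k\nu}$, a standard NVB-closure argument shows that $\widehat\varphi_{\ell_k\nu,\xi} \in \X_{\ell'\nu} \subseteq \X_{\infty\nu}$ for some $\ell' > \ell_k$, which unlocks the sharper bound~\eqref{eq2:msv} of Lemma~\ref{lem:prop:spatial} on the marked contributions. In the primal case $\rho_\ell = \mu_\ell$, summing~\eqref{eq2a:msv} over marked indices and using the finite-overlap property of the vertex patches together with the orthogonality of $(P_\nu)_{\nu \in \III}$ in the parametric direction yields $\sum_{\text{marked}} \mu_{\ell_k}(\nu,\xi)^2 \lesssim \bnorm{\uu_\infty - \uu_{\ell_k}}^2 \to 0$, which already delivers the claim in that case.

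The combined case $\rho_\ell^2 = \mu_\ell^2 + \zeta_\ell^2$ is more delicate because $\zeta_{\ell_k}(\nu,\xi) = \tau_{\ell_k}(\zz[\uu_{\ell_k}]|\nu,\xi)$ involves the moving input $\uu_{\ell_k}$. I fix an auxiliary $k_0 \in \N_0$ and approximate $\zeta_{\ell_k}(\nu,\xi)$ by $\tau_{\ell_k}(\zz[\uu_{k_0}]|\nu,\xi)$. Lemma~\ref{prop:nonlinear:goal:aux:stability} gives $\bnorm{\zz_\ell[\uu_\ell] - \zz_\ell[\uu_{k_0}]} \le \Cgoal \bnorm{\uu_\ell - \uu_{k_0}}$, which tends to $\Cgoal \bnorm{\uu_\infty - \uu_{k_0}}$ as $\ell \to \infty$ and can be made arbitrarily small by choosing $k_0$ large. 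An analogous bound for the enriched Galerkin solution $\widehat\zz_\ell[\uu_\ell]$, combined with the estimator representation~\eqref{eq1:thm:estimator}, transfers this into a seminorm-type control of the indicator difference $|\tau_\ell(\zz[\uu_\ell]|\nu,\xi) - \tau_\ell(\zz[\uu_{k_0}]|\nu,\xi)|$. Applying~\eqref{eq2b:msv} to the frozen dual and summing with the bounded-overlap argument, the marked sum is dominated by $\bnorm{\uu_\infty - \uu_{\ell_k}}^2 + \bnorm{\zz_\infty[\uu_{k_0}] - \zz_{\ell_k}[\uu_{k_0}]}^2$ plus a perturbation of order $\bnorm{\uu_\infty - \uu_{k_0}}^2$. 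Letting first $k \to \infty$ (with $k_0$ fixed) and then $k_0 \to \infty$ concludes the proof, with Lemma~\ref{lemma:lebesque} invoked to justify the commutation of these two limits across the countable sum over $\nu \in \PPP_{\ell_k}$.

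The hardest part is precisely the handling of the nonlinear dual dependence: the sequence $(\zz_\ell[\uu_\ell])_\ell$ has a moving right-hand side and therefore is not covered by Lemma~\ref{lemma:apriori} directly. The freeze-and-perturb scheme above, powered by the stability estimate in Lemma~\ref{prop:nonlinear:goal:aux:stability} and the generalized dominated-convergence statement of Lemma~\ref{lemma:lebesque}, is the essential new ingredient required to transfer the linear single-level convergence argument of~\cite{bprr18++} to the present multilevel goal-oriented setting with nonlinear goal functionals.
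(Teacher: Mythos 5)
Your overall scaffolding (a~priori limits $\uu_\infty$ and $\zz_\infty[\cdot]$ from Lemma~\ref{lemma:apriori}, reduction to the marked sum via the D\"orfler hypothesis, stability of the dual via Lemma~\ref{prop:nonlinear:goal:aux:stability}) matches the paper, and your freeze-and-perturb treatment of the moving right-hand side $\gg'(\uu_\ell)$ is a workable variant of the paper's triangle-inequality chain through $\uu_\infty$. However, the central step of your argument contains a genuine gap. You claim that $\xi \in \MM_{\ell_k\nu}$ implies $\widehat\varphi_{\ell_k\nu,\xi} \in \X_{\ell'\nu} \subseteq \X_{\infty\nu}$ for some $\ell' > \ell_k$ "by a standard NVB-closure argument". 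This is false. Marking $\xi$ only forces $\xi$ to become a vertex of $\TT_{\ell_k+1,\nu}$; the resulting patch of $\xi$ in $\TT_{\ell_k+1,\nu}$ is in general strictly coarser than its patch in the uniform refinement $\widehat\TT_{\ell_k\nu}$ (e.g., in 2D the two triangles sharing the marked edge are merely bisected once), and nothing in the algorithm forces any further refinement there. Hence $\widehat\varphi_{\ell_k\nu,\xi}$ has kink lines that are never resolved by any $\TT_{\ell'\nu}$, so it need not lie in $\X_{\infty\nu}$, and the sharper estimate~\eqref{eq2:msv} is not available for all marked contributions. This is exactly why the paper introduces the decomposition $\TT_{\ell\nu} = \TT_{\ell\nu}^{\rm good} \cup \TT_{\ell\nu}^{\rm bad} \cup \TT_{\ell\nu}^{\rm neither}$: marked vertices lie only in good or \emph{neither} elements, and the neither-contributions must be handled by the weaker bound~\eqref{eq1:msv} (in terms of the exact solution $\uu$, resp.\ $\zz[\uu_\ell]$) combined with $|D_{\ell\nu}^{\rm neither}| \to 0$ and the absolute continuity of the local energy seminorm. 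Your proposal omits this case entirely.

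A second, specifically multilevel, gap concerns the summation over $\nu$. The local efficiency bounds~\eqref{eq1:msv}--\eqref{eq2:msv} control $\rho_\ell(\nu,\xi)$ by a patch seminorm of the \emph{global} error, uniformly in $\nu$; the finite-overlap argument therefore yields $\sum_{\xi} \rho_\ell(\nu,\xi)^2 \lesssim \bnorm{\uu_\infty - \uu_\ell}^2$ \emph{for each fixed} $\nu$, and summing over $\nu \in \PPP_{\ell_k}$ naively produces a factor $\#\PPP_{\ell_k} \to \infty$. The "orthogonality of $(P_\nu)$" you invoke does not repair this, because the $\nu$-localization has already been lost in passing to the patch seminorm. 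The paper's Step~4 resolves this by constructing the $\nu$-localized dominating quantities $\beta_\nu^{(\ell)} = \norm{\widehat e_{\ell\nu}}{D}^2$ (via the auxiliary problems~\eqref{eq:lem:prop:spatial:b}), proving $\ell^1$-convergence of $(\beta_\nu^{(\ell)})_\nu$, and then applying Lemma~\ref{lemma:lebesque} to upgrade the parameter-wise convergence to convergence of the sum over $\PPP_\ell$. Your proposal invokes Lemma~\ref{lemma:lebesque} only to commute two limits at the very end, which does not address this issue. Both gaps would need to be filled for the argument to go through.
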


\begin{proof}
The proof follows the lines of our own work~\cite[Proposition~11]{bprr18++} and builds upon~\cite[Theorem~2.1]{msv08}.
Therefore, in the same way as it was done in the proof of Proposition~11 in~\cite{bprr18++},
we sketch the main arguments and highlight how the results of~\cite{msv08} for deterministic problems can be extended to the parametric setting.
While the observations and notation in Steps~1--2 are essentially the same as in~\cite{bprr18++},
we note that Steps~3--6 are considerably more involved because of the present multilevel structure.

{\bf Step~1.}
The variational problems~\eqref{eq:weakform} and~\eqref{eq:nonlinear:weakform:dual},
their discretizations,
and the proposed adaptive algorithm satisfy the general framework
described in~\cite[section~2]{msv08}:
\begin{itemize}
\item the variational problems~\eqref{eq:weakform} and~\eqref{eq:nonlinear:weakform:dual}
fit into the class of problems considered in~\cite[section~2.1]{msv08};
\item the Galerkin discretizations~\eqref{eq:discrete_formulation} and~\eqref{eq:nonlinear:weakform:dual:discrete}
satisfy the assumptions in~\cite[equations~(2.6)--(2.8)]{msv08};
\item the spatial NVB refinement considered in the present paper satisfies the assumptions
on the mesh refinement in~\cite[equations~(2.5) and~(2.14)]{msv08};
\item the D\"orfler marking criterion~\eqref{eq1:prop:conv:spatial} implies the weak marking condition in~\cite[equation~(2.13)]{msv08};
\item finally, Lemma~\ref{lem:prop:spatial} proves the local discrete efficiency estimate
in the parametric setting (cf.~\cite[equation~(2.9b)]{msv08}).
Note that the global reliability of the estimator (see the lower bound of~\eqref{eq2:thm:estimator}
and~\cite[equation~(2.9a)]{msv08})
is not exploited here (and hence, not needed for the proof of Theorem~\ref{thm:nonlinear:plain_convergence}).
In particular, the estimates~\eqref{eq1:msv} and~\eqref{eq2:msv} from Lemma~\ref{lem:prop:spatial}
replace~\cite[eq.~(2.9b)]{msv08} and~\cite[eq.~(4.11)]{msv08}, respectively.
\end{itemize}

{\bf Step~2.}
Let $\nu \in \PPP_\infty := \bigcup_{\ell = 0}^\infty \PPP_\ell$. 
Let $\TT_{\infty\nu} := \bigcup_{k \ge 0} \bigcap_{\ell \ge k} \TT_{\ell\nu}$ be the set of all elements which remain unrefined after finitely many steps of refinement, where $\TT_{\ell\nu} = \emptyset$ if $\nu \not\in \PPP_\ell$.
In the spirit of~\cite[eqs.~(4.10)]{msv08}, for all $\ell \in \N_0$, we consider the decomposition
$ \TT_{\ell\nu} = \TT_{\ell\nu}^{\rm good} \cup \TT_{\ell\nu}^{\rm bad} \cup  \TT_{\ell\nu}^{\rm neither}$,
where
\begin{align*}
 \TT_{\ell\nu}^{\rm good} &:= \{T \in \TT_{\ell\nu} :
 \widehat\varphi_{\ell\nu,\xi} \in \X_{\infty\nu} \text{ for all } \xi \in \NN_{\ell\nu}^+ \cap T \}, \\
 \TT_{\ell\nu}^{\rm bad} &:= \{T \in \TT_{\ell\nu} : T' \in \TT_{\infty\nu} \text{ for all } T' \in \TT_{\ell\nu} \text{ with } T \cap T' \neq \emptyset \}, \\
 \TT_{\ell\nu}^{\rm neither} &:= \TT_{\ell\nu} \setminus (\TT_{\ell\nu}^{\rm good} \cup \TT_{\ell\nu}^{\rm bad}).
\end{align*}
The elements in $\TT_{\ell\nu}^{\rm good}$ are refined sufficiently many times in order to guarantee~\eqref{eq2:msv}.
The set $\TT_{\ell\nu}^{\rm bad}$ consists of all elements such that the whole element patch remains unrefined.
The remaining elements are collected in the set $\TT_{\ell\nu}^{\rm neither}$.
Note that $\TT_{\ell\nu}^{\rm good}$ is slightly larger than the corresponding set $\GG_{\ell\nu}^0$ in~\cite[eq.~(4.10a)]{msv08},
while $\TT_{\ell\nu}^{\rm bad}$ coincides with the corresponding set $\GG_{\ell\nu}^+$ in~\cite[eq.~(4.10b)]{msv08}.
As a consequence, $\TT_{\ell\nu}^{\rm neither}$ is smaller than the corresponding set $\GG_{\ell\nu}^*$ in~\cite[eq.~(4.10c)]{msv08}.

{\bf Step~3.}
In this step, we consider the two cases of $\rho_\ell(\nu,\xi)$ separately.
Let $\rho_\ell(\nu,\xi) = \mu_\ell(\nu,\xi)$.
By arguing as in the proof of Proposition~4.1 in~\cite{msv08}, we exploit
the uniform shape-regularity of the mesh $\TT_{\ell\nu}$ guaranteed by NVB
and use Lemma~\ref{lem:prop:spatial} and Lemma~\ref{lemma:apriori} to prove that
\begin{align}
 \sum_{T \in \TT_{\ell\nu}^{\rm good}} \sum_{\xi \in \NN_{\ell\nu}^+ \cap T} \mu_\ell(\nu,\xi)^2
 & \stackrel{\eqref{eq2a:msv}}{\lesssim}
 \sum_{T \in \TT_{\ell\nu}^{\rm good}} \sum_{\xi \in \NN_{\ell\nu}^+ \cap T} \enorm{\uu_\infty - \uu_\ell}{\omega_{\ell\nu}(\xi)}^2
\nonumber\\
 & \stackrel{\phantom{\eqref{eq2b:msv}}}{\lesssim} \enorm{\uu_\infty - \uu_{\ell}}{}^2 
 \xrightarrow{\ell \to \infty} 0.
 \label{eq:msv:step1}
\end{align}

Let $D_{\ell\nu}^{\rm neither} := \bigcup\{T' \in \TT_{\ell\nu} : T\cap T' \neq \emptyset \text{ for some } T \in \TT_{\ell\nu}^{\rm neither} \}$.
Since $\TT_{\ell\nu}^{\rm neither}$ is contained in the corresponding set $\GG_{\ell\nu}^*$ in~\cite[eq.~(4.10c)]{msv08},
arguing as in Step~1 of the proof of Proposition~4.2 in~\cite{msv08},
we show that $|D_{\ell\nu}^{\rm neither}| \to 0$ as $\ell \to \infty$.
Hence, Lemma~\ref{lem:prop:spatial}, uniform shape regularity, and
the fact that the local energy seminorm is absolutely continuous with respect to the Lebesgue measure,
i.e., $\enorm{\vv}{\omega} \to 0$ as $\vert\omega\vert \to 0$ for all $\vv \in \V$,
lead to
\begin{align}
 \sum_{T \in \TT_{\ell\nu}^{\rm neither}} \sum_{\xi \in \NN_{\ell\nu}^+ \cap T} \mu_\ell(\nu,\xi)^2
  & \stackrel{\eqref{eq1a:msv}}{\lesssim}
  \sum_{T \in \TT_{\ell\nu}^{\rm neither}} \sum_{\xi \in \NN_{\ell\nu}^+ \cap T} \enorm{\uu - \uu_\ell}{\omega_{\ell\nu}(\xi)}^2
  \nonumber
  \\
  & \stackrel{\phantom{\eqref{eq1b:msv}}}{\lesssim} \enorm{\uu - \uu_\ell}{D_{\ell\nu}^{\rm neither}}^2
  \xrightarrow{\ell \to \infty} 0.
 \label{eq:msv:step2}
\end{align}
Now, let
$\rho_\ell(\nu,\xi) = \big(\mu_\ell(\nu,\xi)^2 + \zeta_\ell(\nu,\xi)^2\big)^{1/2}
  \reff{eq:nonlinear:tau:abbreviations:local}= \big(\tau_\ell(\uu|\nu,\xi)^2 + \tau_\ell(\zz[\uu_\ell]|\nu,\xi)^2\big)^{1/2}$.
To verify the analogue of~\eqref{eq:msv:step1} in this case,
note that
\begin{align*}
\sum_{T \in \TT_{\ell\nu}^{\rm good}} \sum_{\xi \in \NN_{\ell\nu}^+ \cap T} 
\big[ \tau_\ell(\uu|\nu,\xi)^2 + \tau_\ell(\zz[\uu_\ell]|\nu,\xi)^2 \big]
\reff{eq2b:msv}\lesssim
\bnorm{\uu_\infty - \uu_\ell}^2 + \bnorm{\zz_\infty[\uu_\ell] - \zz_\ell[\uu_\ell]}^2.
\end{align*}
We also note the \textsl{a~priori} convergence result
$\bnorm{\zz_\infty[\uu_\infty] - \zz_\ell[\uu_\infty]} + \bnorm{\zz_\ell[\uu_\infty] - \zz_\ell[\uu_\ell]} \to 0$
as $\ell \to \infty$,
where the limiting functions $\uu_\infty, \zz_\infty[\uu_\infty] \in \V$ are provided by Lemma~\ref{lemma:apriori}.
Therefore, the triangle inequality and Lemma~\ref{prop:nonlinear:goal:aux:stability} prove that
\begin{align*}
 \bnorm{\zz_\infty[\uu_\ell] - \zz_\ell[\uu_\ell]}
 &\le \bnorm{\zz_\infty[\uu_\ell] - \zz_\infty[\uu_\infty]}
 + \bnorm{\zz_\infty[\uu_\infty] - \zz_\ell[\uu_\infty]}
 + \bnorm{\zz_\ell[\uu_\infty] - \zz_\ell[\uu_\ell]}
 \\
 &\lesssim 
 \bnorm{\uu_\infty - \uu_\ell} 
 + \bnorm{\zz_\infty[\uu_\infty] - \zz_\ell[\uu_\infty]}
 \xrightarrow{\ell \to \infty} 0.
\end{align*}
Hence, we are led to
\begin{align*}
 \sum_{T \in \TT_{\ell\nu}^{\rm good}} \sum_{\xi \in \NN_{\ell\nu}^+ \cap T} 
 \big[ \tau_\ell(\uu|\nu,\xi)^2 + \tau_\ell(\zz[\uu_\ell]|\nu,\xi)^2 \big]
 \xrightarrow{\ell \to \infty} 0.
\end{align*}
Similar observations verify the analogue of~\eqref{eq:msv:step2}.
Indeed,
\begin{align*}
 \sum_{T \in \TT_{\ell\nu}^{\rm neither}} \sum_{\xi \in \NN_{\ell\nu}^+ \cap T} 
 \!\big[ \tau_\ell(\uu|\nu,\xi)^2 \,{+}\, \tau_\ell(\zz[\uu_\ell]|\nu,\xi)^2 \big]
 \!\!\reff{eq1b:msv}\lesssim\!\!
 \enorm{\uu \,{-}\, \uu_\ell}{D_{\ell\nu}^{\rm neither}}^2
 \,{+}\, \enorm{\zz[\uu_\ell] \,{-}\, \zz_\ell[\uu_\ell]}{D_{\ell\nu}^{\rm neither}}^2.
\end{align*}
Since $\bnorm{\zz[\uu_\ell] - \zz_\ell[\uu_\ell]} \to \bnorm{\zz[\uu_\infty] - \zz_\infty[\uu_\infty]}$
and $|D_{\ell\nu}^{\rm neither}| \to 0$ as $\ell \to \infty$, we have
\begin{align*}
 \sum_{T \in \TT_{\ell\nu}^{\rm neither}} \sum_{\xi \in \NN_{\ell\nu}^+ \cap T} 
 \big[ \tau_\ell(\uu|\nu,\xi)^2 + \tau_\ell(\zz[\uu_\ell]|\nu,\xi)^2 \big]
 \xrightarrow{\ell \to \infty} 0.
\end{align*}
Thus, for both cases of $\rho_\ell(\nu,\xi)$, we have proved that
\begin{align} \label{eq:msv:step12:rho}
 \sum_{T \in \TT_{\ell\nu}^{\rm good}} \sum_{\xi \in \NN_{\ell\nu}^+ \cap T}
 \rho_\ell(\nu,\xi)^2
 +
 \sum_{T \in \TT_{\ell\nu}^{\rm neither}} \sum_{\xi \in \NN_{\ell\nu}^+ \cap T} 
 \rho_\ell(\nu,\xi)^2 
 \xrightarrow{\ell \to \infty} 0.
\end{align}

{\bf Step~4.}
The aim of this step is to strengthen~\eqref{eq:msv:step12:rho} so that the convergence holds
for the sum over multi-indices $\nu \in \PPP_\ell$.
We will show this for $\rho_\ell(\nu,\xi) = \mu_\ell(\nu,\xi)$, with all the arguments applying
to the case of $\rho_\ell(\nu,\xi) = \big(\mu_\ell(\nu,\xi)^2 + \zeta_\ell(\nu,\xi)^2\big)^{1/2}$ without changes.

Recall that the index set $\III$ is countable so that we can identify each index $\nu \in \III$ with
a natural number $n \in \N$. For $\ell \in \N_0$, we consider the following~sequence:
\[
(\alpha_n^{(\ell)})_{n \in \N}
 = (\alpha_\nu^{(\ell)})_{\nu \in \III}
 := \bigg( \sum_{T \in \TT_{\ell\nu}^{\rm good}} \sum_{\xi \in \NN_\ell^+ \cap T} \mu_\ell(\nu,\xi)^2
                +
                \sum_{T \in \TT_{\ell\nu}^{\rm neither}} \sum_{\xi \in \NN_{\ell\nu}^+ \cap T} \mu_\ell(\nu,\xi)^2
      \bigg)_{\nu \in \III},
\]
where $\TT_{\ell\nu} = \emptyset$ and, consequently, $\alpha_\nu^{(\ell)} = 0$ if $\nu \in \III \backslash \PPP_\ell$.
We already know from~\eqref{eq:msv:step12:rho}
that $\alpha_\nu^{(\ell)} \to 0 =: \alpha_\nu$ as $\ell \to \infty$, for all $\nu \in \III$.
Arguing as in the proof of~\cite[Lemma~5, Step~2]{bpr2020+}, we find that
\begin{align*}
 0 \,{\le}\, \alpha_\nu^{(\ell)} 
 \,{\le}\, \sum_{T \in \TT_{\ell\nu}} \sum_{\xi \in \NN_{\ell\nu}^+ \cap T} \mu_\ell(\nu,\xi)^2
 \,{ \lesssim}\, \sum_{\xi \in \NN_{\ell\nu}^+} \mu_\ell(\nu,\xi)^2
 \,{\lesssim}\, \norm{\widehat e_{\ell\nu}}{D}^2 \,{=:}\, \beta_\nu^{(\ell)},
\end{align*}
where $\widehat e_{\ell\nu} \in \widehat\X_{\ell\nu}$ solves
\begin{equation} \label{eq:lem:prop:spatial:b}
 \dual{\widehat e_{\ell\nu}}{\widehat v_{\ell\nu}} = B(\widehat\uu_\ell - \uu_\ell, \widehat v_{\ell\nu} P_\nu)
 \quad \text{for all } \widehat v_{\ell\nu} \in \widehat\X_{\ell\nu}.
\end{equation}
Defining
$\V_\ell'' := \bigoplus_{\nu \in \PPP_\ell} \big[ \widehat\X_{\ell\nu} \otimes {\rm span}\{P_\nu\} \big]
\subseteq \widehat\V_\ell$
and $\ee''_\ell := \sum_{\nu \in \PPP_\ell} \widehat e_{\ell\nu} P_\nu$,
we conclude from~\eqref{eq:lem:prop:spatial:b} and~\eqref{eq:galerkin:aux2} with $\ww = \uu$
that $\ee''_\ell \in \V_\ell''$ is the unique solution to
\begin{align*}
  B_0(\ee''_\ell, \vv''_\ell) = B(\uu - \uu_\ell, \vv''_\ell)
 \quad \text{for all } \vv''_\ell \in \V_\ell''.
\end{align*}
Since $\V_\ell'' \subseteq \V_{\ell+1}''$ for all $\ell \in \N_0$ and since $\ww_\ell \to \ww_\infty$ as $\ell \to \infty$,
we can argue as in the proof of~\cite[Lemma~14]{bprr18++}
to see that Lemma~\ref{lemma:apriori} provides $\ee_\infty'' = \sum_{\nu \in \III} \widehat e_{\infty\nu} P_\nu \in \V$ such that
\begin{align*}
 \sum_{\nu \in \III} \norm{\widehat e_{\infty\nu} - \widehat e_{\ell\nu}}{D}^2
 = \enorm{\ee''_\infty - \ee''_\ell}{0}^2 \xrightarrow{\ell \to \infty} 0,
\end{align*}
where $\widehat e_{\ell\nu} = 0$ if $\nu \in \III \backslash \PPP_\ell$.
In particular, it follows that 
\begin{align*}
 \sum_{\nu \in \III} \big| \norm{\widehat e_{\infty\nu}}{D}^2 - \norm{\widehat e_{\ell\nu}}{D}^2 \big|
 &= 
 \sum_{\nu \in \III} \big| \norm{\widehat e_{\infty\nu}}{D} - \norm{\widehat e_{\ell\nu}}{D} \big|
 \big[ \norm{\widehat e_{\infty\nu}}{D} + \norm{\widehat e_{\ell\nu}}{D} \big]
 \\&
 \le
 \sum_{\nu \in \III} \norm{\widehat e_{\infty\nu} - \widehat e_{\ell\nu}}{D} \big[ \norm{\widehat e_{\infty\nu}}{D} + \norm{\widehat e_{\ell\nu}}{D} \big]
 \\& 
 \le 
 2 \big[ \enorm{\ee''_\infty}{0} + \enorm{\ee''_\ell}{0} \big] 
 \, \enorm{\ee''_\infty - \ee''_\ell}{0}
 \xrightarrow{\ell \to \infty} 0.
\end{align*}
With $\beta_\nu := \norm{\widehat e_{\infty\nu}}{D}^2$, we can thus apply Lemma~\ref{lemma:lebesque}
to strengthen the parameter-wise convergence to
\begin{align*}
 \sum_{\nu \in \III} \alpha_\nu^{(\ell)}  = \sum_{\nu \in \PPP_\ell} \alpha_\nu^{(\ell)} \xrightarrow{\ell \to \infty} 0.
\end{align*}
In explicit terms, this proves that for both cases of $\rho_\ell(\nu,\xi)$, the convergence result in~\eqref{eq:msv:step12:rho}
can indeed be strengthened to 
\begin{align}\label{eq:msv:step1-2}
 \sum_{\nu \in \PPP_\ell}
 \bigg( \sum_{T \in \TT_{\ell\nu}^{\rm good}} \sum_{\xi \in \NN_{\ell\nu}^+ \cap T} \rho_\ell(\nu,\xi)^2
 + \sum_{T \in \TT_{\ell\nu}^{\rm neither}} \sum_{\xi \in \NN_{\ell\nu}^+ \cap T} \rho_\ell(\nu,\xi)^2
 \bigg)
 \xrightarrow{\ell \to \infty} 0.
\end{align}

{\bf Step~5.}
To conclude the proof, it remains to consider the sets $\TT_{\ell\nu}^{\rm bad}$.
If $\xi \in \MM_{\ell_k\nu}$ and $T \in \TT_{\ell_k\nu}$ with $\xi \in T$, then
$T \in \TT_{\ell_k\nu} \setminus \TT_{\ell_k\nu}^{\rm bad} = \TT_{\ell_k\nu}^{\rm good} \cup \TT_{\ell_k\nu}^{\rm neither}$.
Therefore, it follows from~\eqref{eq:msv:step1-2} that
\begin{equation*}
 \vartheta \sum_{\nu \in \PPP_{\ell_k}} \sum_{\xi \in \NN_{\ell_k\nu}^+} \rho_{\ell_k}(\nu,\xi)^2
 \reff{eq1:prop:conv:spatial}\le
 \sum_{\nu \in \PPP_{\ell_k}} \sum_{\xi \in \MM_{\ell_k\nu}} \rho_{\ell_k}(\nu,\xi)^2
 \xrightarrow{k \to \infty} 0.
\end{equation*}
In particular, we obtain (cf.~\cite[eq.~(4.17)]{msv08})
\begin{equation} \label{eq:msv:step2:1}
 \sum_{\xi \in \NN_{\ell_k\nu}^+ \cap T} \rho_{\ell_k}(\nu,\xi)^2
 \xrightarrow{k \to \infty} 0
 \quad \text{for all } \nu \in \PPP_\ell \text{ and all } T \in \TT_{\ell_k \nu}^{\rm bad}.
\end{equation}
Arguing as in Steps~2--5 of the proof of Proposition~4.3 in~\cite{msv08},
we use~\eqref{eq:msv:step2:1} and
apply the Lebesgue dominated convergence theorem to derive that
\begin{equation*}
 \sum_{T \in \TT_{\ell_k \nu}^{\rm bad}} \sum_{\xi \in \NN_{\ell_k \nu}^+ \cap T} \tau_{\ell_k}(\ww|\nu,\xi)^2
 \xrightarrow{k \to \infty} 0
 \quad \text{for all } \nu \in \PPP_\ell.
\end{equation*}
As in Step~4, this parameter-wise convergence can be strengthened to  
\begin{align}\label{eq:msv:step3}
 \sum_{\nu \in \PPP_{\ell_k}} \sum_{T \in \TT_{\ell_k \nu}^{\rm bad}}
 \sum_{\xi \in \NN_{\ell_k \nu}^+ \cap T} \rho_{\ell_k}(\nu,\xi)^2
 \xrightarrow{k \to \infty} 0.
\end{align}

{\bf Step~6.}
Combining~\eqref{eq:msv:step1-2} and~\eqref{eq:msv:step3}, we obtain
\begin{align*}
 \sum_{\nu \in \PPP_{\ell_k}} \sum_{\xi \in \NN_{\ell_k\nu}^+} \rho_{\ell_k}(\nu,\xi)^2
 \,{\le}
 \sum_{\nu \in \PPP_{\ell_k}} \bigg(
 &
 \sum_{T \in \TT_{\ell_k\nu}^{\rm good}} \sum_{\xi \in \NN_{\ell_k\nu}^+ \cap T} \rho_{\ell_k}(\nu,\xi)^2
 +
 \sum_{T \in \TT_{\ell_k \nu}^{\rm bad}} \sum_{\xi \in \NN_{\ell_k \nu}^+ \cap T} \rho_{\ell_k}(\nu,\xi)^2
 \\& \quad
 +
 \sum_{T \in \TT_{\ell_k\nu}^{\rm neither}} \sum_{\xi \in \NN_{\ell_k\nu}^+ \cap T} \rho_{\ell_k}(\nu,\xi)^2
 \bigg)  \xrightarrow{k \to \infty} 0.
\end{align*}
This concludes the proof.
\end{proof}

We are now in a position to prove our main result.

\begin{proof}[Proof of Theorem~\ref{thm:nonlinear:plain_convergence}]
The proof is split into five steps.

{\bf Step~1.}
Let $(\ell_k')_{k \in \N_0}$ be the sequence of iterations,
where the marking strategy of Algorithm~\ref{algorithm} selects $\MMM_{\ell_k} = \MMM_{\ell_k}'$ and $\MM_{\ell_k\nu} = \MM_{\ell_k\nu}'$
for all $\nu \in \PPP_{\ell_k}$ (i.e., marking with respect to the primal error estimate $\mu_\ell$).
Let $(\ell_k'')_{k \in \N_0}$ be the index sequence, where the marking strategy of Algorithm~\ref{algorithm} selects
$\MMM_{\ell_k} = \MMM_{\ell_k}''$ and $\MM_{\ell_k\nu} = \MM_{\ell_k\nu}''$ for all $\nu \in \PPP_{\ell_k}$
(i.e., marking with respect to the combined primal-dual error estimate $\big[ \mu_\ell^2 + \zeta_\ell^2 \big]^{1/2}$).
Note that this provides a partitioning of the sequence
$\Big( \mu_\ell \, \big[ \mu_\ell^2 + \zeta_\ell^2 \big]^{1/2} \Big)_{\ell \in \N_0}$
into two disjoint subsequences
$\Big( \mu_{\ell_k'} \, \big[ \mu_{\ell_k'}^2 + \zeta_{\ell_k'}^2 \big]^{1/2} \Big)_{k \in \N_0}$ and
$\Big( \mu_{\ell_k''} \, \big[ \mu_{\ell_k''}^2 + \zeta_{\ell_k''}^2 \big]^{1/2} \Big)_{k \in \N_0}$.
Without loss of generality (as the following arguments will show), we can assume that both subsequences are countably infinite.

{\bf Step~2.} 
In this step, we show the convergence $\mu_{\ell_k'} \to 0$
along the iteration sequence $(\ell_k')_{k \in \N_0}$, where the primal error estimate is employed for marking, i.e.,
\begin{align*}
 \theta \mu_{\ell_k'}^2 
 \le \sum_{\nu \in \PPP_{\ell_k'}} \sum_{\xi \in \MM'_{\ell_k'\nu}}  \mu_{\ell_k'}(\nu,\xi)^2 + \sum_{\nu \in \MMM'_{\ell_k'}} \mu_{\ell_k'}(\nu)^2.
\end{align*}
To this end, the sequence $(\ell_k')_{k \in \N_0}$ is further partitioned into
two disjoint subsequences $(\ell_k^{\prime+})_{k \in \N_0}$ and $(\ell_k^{\prime-})_{k \in \N_0}$, where
\begin{itemize}
\item $\displaystyle \sum_{\nu \in \PPP_{\ell_k^{\prime +}}} \sum_{\xi \in \MM'_{\ell_k^{\prime+}\nu}}  \mu_{\ell_k^{\prime+}}(\nu,\xi)^2 \ge \frac{1}{2} \, \bigg( \sum_{\nu \in \PPP_{\ell_k^{\prime+}}} \sum_{\xi \in \MM'_{\ell_k^{\prime+}\nu}}  \mu_{\ell_k^{\prime+}}(\nu,\xi)^2 + \sum_{\nu \in \MMM'_{\ell_k^{\prime+}}} \mu_{\ell_k^{\prime+}}(\nu)^2 \bigg)$,
\item $\displaystyle \sum_{\nu \in \PPP_{\ell_k^{\prime-}}} \sum_{\xi \in \MM'_{\ell_k^{\prime-}\nu}}  \mu_{\ell_k^{\prime-}}(\nu,\xi)^2 < \frac{1}{2} \, \bigg( \sum_{\nu \in \PPP_{\ell_k^{\prime-}}} \sum_{\xi \in \MM'_{\ell_k^{\prime-}\nu}}  \mu_{\ell_k^{\prime-}}(\nu,\xi)^2 + \sum_{\nu \in \MMM'_{\ell_k^{\prime-}}} \mu_{\ell_k^{\prime-}}(\nu)^2 \bigg)$,
\end{itemize}
respectively.
Again, without loss of generality (as the following arguments will show),
we assume that also these two subsequences are countably infinite.

{\bf Step~2a.} 
Along the sequence $(\ell_k^{\prime+})_{k \in \N_0}$, by definition, it follows that
\begin{align*}
\theta \sum_{\nu \in \PPP_{\ell_k^{\prime+}}} \sum_{\xi \in \NN_{\ell_k^{\prime+}\nu}^+}  \mu_{\ell_k^{\prime+}}(\nu,\xi)^2
\le \theta \mu_{\ell_k^{\prime+}}^2
\le 2 \sum_{\nu \in \PPP_{\ell_k^{\prime+}}} \sum_{\xi \in \MM'_{\ell_k^{\prime+}\nu}}
\mu_{\ell_k^{\prime+}}(\nu,\xi)^2,
\end{align*}
i.e., there holds the D\"orfler marking criterion~\eqref{eq1:prop:conv:spatial}
for spatial discretizations with $\vartheta = \theta/2$.
Therefore, Proposition~\ref{prop:conv:spatial} proves that 
\begin{equation*}
 \sum_{\nu \in \PPP_{\ell_k^{\prime+}}} \sum_{\xi \in \NN_{\ell_k^{\prime+}\nu}^+} \mu_{\ell_k^{\prime+}}(\nu,\xi)^2 \xrightarrow{k \to \infty} 0
\end{equation*}
and, hence, also $\mu_{\ell_k^{\prime+}}^2 \to 0$ as $k \to \infty$.

{\bf Step~2b.} 
Along the sequence $(\ell_k^{\prime-})_{k \in \N_0}$, by definition, it follows that
\begin{align*}
 \sum_{\nu \in \MMM'_{\ell_k^{\prime-}}} \mu_{\ell_k^{\prime-}}(\nu)^2  
 > \frac{1}{2} \, \bigg( \sum_{\nu \in \PPP_{\ell_k^{\prime-}}} \sum_{\xi \in \MM'_{\ell_k^{\prime-}\nu}}  \mu_{\ell_k^{\prime-}}(\nu,\xi)^2 + \sum_{\nu \in \MMM'_{\ell_k^{\prime-}}} \mu_{\ell_k^{\prime-}}(\nu)^2 \bigg)
\end{align*}
and, hence,
\begin{align*}
 \theta \sum_{\nu \in \QQQ_{\ell_k^{\prime-}}} \mu_{\ell_k^{\prime-}}(\nu)^2 
 \le \theta \mu_{\ell_k^{\prime-}}^2  
 < 2 \sum_{\nu \in \MMM'_{\ell_k^{\prime-}}} \mu_{\ell_k^{\prime-}}(\nu)^2, 
 \end{align*}
i.e., there holds the D\"orfler marking criterion~\eqref{eq:doerfler:vartheta:parametric}
for parametric discretizations with $\vartheta = \theta/2$. Therefore, Proposition~\ref{prop10} implies that
\begin{align*}
 \sum_{\nu \in \QQQ_{\ell_k^{\prime-}}} \mu_{\ell_k^{\prime-}}(\nu)^2 
 \xrightarrow{k \to \infty} 0
\end{align*}
and, hence, also $\mu_{\ell_k^{\prime-}}^2 \to 0$ as $k \to \infty$.

{\bf Step~2c.}
From the preceding Steps~2a--2b, we prove that the sequence $(\mu_{\ell_k'})_{k \in \N_0}$ can be partitioned into
two subsequences $(\mu_{\ell_k^{\prime+}})_{k \in \N_0}$ and $(\mu_{\ell_k^{\prime-}})_{k \in \N_0}$, which both converge to zero.
According to basic calculus, this implies that $\mu_{\ell_k'} \to 0$ as $k \to \infty$. 

{\bf Step~3.}
Note that the dual error estimate $\zeta_\ell$ defined in~\eqref{eq:nonlinear:tau:abbreviations}
is uniformly bounded,~as
\begin{align*}
   \zeta_\ell \reff{eq1:thm:estimator}\lesssim
   \bnorm{\widehat \zz_\ell[\uu_\ell] - \zz_\ell[\uu_\ell]}
   & \reff{eq:saturation_aux}\le
   \bnorm{\zz[\uu_\ell] - \zz_\ell[\uu_\ell]} \reff{eq:best_app}\le
   \bnorm{\zz[\uu_\ell]} \le
   \bnorm{\zz[\uu] - \zz[\uu_\ell]} + \bnorm{\zz[\uu]}
   \\
   & \reff{eq:nonlinear:goal:aux:stability}\le
   \Cgoal \, \bnorm{\uu - \uu_\ell} + \bnorm{\zz[\uu]} \reff{eq:best_app}\lesssim
   \bnorm{\uu} + \bnorm{\zz[\uu]}
   \quad \text{for all } \ell \in \N_0.
\end{align*}
Consequently, it follows from Step~2 that 
\begin{align}
\mu_{\ell_k'} \, \big[ \mu_{\ell_k'}^2 + \zeta_{\ell_k'}^2 \big]^{1/2} \xrightarrow{k \to \infty} 0.
\end{align}

{\bf Step~4.}
Note that the roles of the primal and the combined primal-dual error estimates
in all the preceding arguments in Steps~2--3 can be swapped.
Hence, it follows that
\begin{align}
 \mu_{\ell_k''} \, \big[ \mu_{\ell_k''}^2 + \zeta_{\ell_k''}^2 \big]^{1/2} \xrightarrow{k \to \infty} 0,
\end{align}
where we recall that the combined primal-dual error estimate
is employed for marking along the iteration sequence $(\ell_k'')_{k \in \N_0}$.

{\bf Step~5.}
Overall, we obtain that the sequence
$\Big( \mu_\ell \big[ \mu_\ell^2 {+} \zeta_\ell^2 \big]^{\!1/2} \Big)_{\!\ell \in \N_0}$
can be partitioned~into two subsequences
$\Big( \mu_{\ell_k'} \, \big[ \mu_{\ell_k'}^2 + \zeta_{\ell_k'}^2 \big]^{1/2} \Big)_{k \in \N_0}$ and
$\Big( \mu_{\ell_k''} \, \big[ \mu_{\ell_k''}^2 + \zeta_{\ell_k''}^2 \big]^{1/2} \Big)_{k \in \N_0}$, which both converge to zero.
According to basic calculus, this implies that
$\mu_\ell \, \big[ \mu_\ell^2 + \zeta_\ell^2 \big]^{1/2} \to 0$ as $\ell \to \infty$.
\end{proof}

\begin{remark} \label{rem:afem}
Note that standard adaptive SGFEM formally corresponds to the case, where the iteration sequence $(\ell_k'')$
in the proof of Theorem~\ref{thm:nonlinear:plain_convergence} is void.
Therefore, the proof of Theorem~\ref{thm:nonlinear:plain_convergence} also establishes plain convergence of the 
adaptive multilevel SGFEM algorithms from~\cite{bpr2020+}.
In particular, the analysis of the present work for combined D\"orfler marking (as employed in Algorithm~\ref{algorithm})
can be used to prove plain convergence of adaptive algorithms with
separate D\"orfler marking of spatial and parametric indicators (as done, e.g., in~\cite{bprr18++}).
\end{remark}

\section{Numerical results}  \label{sec:numerics}

In this section, 
to illustrate the performance of Algorithm~\ref{algorithm}
and to underpin our theoretical findings,
we present a collection of numerical experiments in~2D.
All computations have been performed using the MATLAB toolbox Stochastic T-IFISS~\cite{brs2020,BespalovR_stoch_tifiss}.
Throughout this section,
we consider the parametric model problem~\eqref{eq:strongform} introduced in section~\ref{sec:problem} and assume
that each parameter in $\y = (y_m)_{m \in \N} \in \Gamma$ is the image of a uniformly distributed independent mean-zero random
variable, so that $\d{\pi_m}(y_m) = \d y_m/2$.


We consider four different setups for model problem~\eqref{eq:strongform} by
varying the physical domain $D \subset \R^2$ (square, L-shaped, and slit domains),
the right-hand side function $\ff$, and the goal functional $\gbold$.
The diffusion coefficient \rev{has the same representation} for all four setups.
\rev{It is the representation} introduced in~\cite[Section~11.1]{egsz14}
(and considered in many other works, e.g.,
\cite{egsz15,bs16,em16,br18,bprr18+,bpr2020+},
thus being a benchmark problem for testing novel discretization strategies).
Specifically, for every $x = (x_1,x_2) \in D$, we set $a_0(x) := 1$ and choose the coefficients $a_m(x)$ 
in~\eqref{eq1:a} to represent planar Fourier modes of increasing total order,~i.e.,
\begin{equation} \label{eq:Eigel:coefficient}
a_m(x) := A m^{-\rev{\sigma}} \cos(2\pi\beta_1(m) \, x_1) \cos(2\pi\beta_2(m)\, x_2)
\quad \text{for all $m \in \N$},
\end{equation}
\rev{where $\sigma>1$ determines the decay rate of the coefficient amplitudes},
$0 < A < 1/\zeta(\rev{\sigma})$ (here $\zeta(\cdot)$ denotes the Riemann zeta function),
$\beta_1$ and $\beta_2$ are defined as
$\beta_1(m) := m - k(m)(k(m) + 1)/2$
and
$\beta_2(m) := k(m) - \beta_1(m)$, respectively,
with $k(m) := \lfloor -1/2  + \sqrt{1/4+2m}\rfloor$ for all $m \in \N$. 
Assumption~\eqref{eq2:a} is satisfied with $a_0^{\rm min} = a_0^{\rm max} = 1$.
Since $\tau = A \, \zeta(\rev{\sigma})$, assumption~\eqref{eq3:a} is fulfilled for any choice of  $0 < A < 1/\zeta(\rev{\sigma})$.
We \rev{consider different values for $\sigma$ (specified below) and} set $A = 0.9 / \zeta(\rev{\sigma})$, which yields $\tau = 0.9$.

In each setup, the goal functional features a weight function $w \in L^\infty(D)$
that we use to introduce local spatial features in the corresponding QoI.
The adaptive algorithm terminates when the goal-oriented error estimate
$\mu_\ell (\mu_\ell^2 + \zeta_\ell^2)^{1/2}$ is smaller than a tolerance $\tol>0$.
Let $L \in \N_0$ denote the iteration at which the adaptive algorithm stops.
At each iteration $\ell \in \{0,1,\ldots,L\}$, the error in the goal functional, $\lvert \gbold(\uu) - \gbold(\uu_\ell) \rvert$,
is estimated by replacing the unknown exact solution $\uu \in \V$ to the primal problem~\eqref{eq:weakform}
by an accurate reference solution $\uu_\mathrm{ref} \in \V_{\mathrm{ref}}$.
Here, we set $\V_{\mathrm{ref}} := \SS^2_0(\TT_{\mathrm{ref}}) \otimes \{ P_\nu : \nu\in\PPP_{\mathrm{ref}} \}$,
i.e., $\V_{\mathrm{ref}}$ follows a single-level construction that employs piecewise quadratic (P2) finite element approximations
over a fine mesh $\TT_{\mathrm{ref}}$ and a large index set $\PPP_{\mathrm{ref}}$. More specifically, 
\begin{itemize}
\item the mesh $\TT_{\mathrm{ref}} := \widehat\TT_{L,\0}$ is the uniform refinement of the final mesh associated with the zero index
(for each setup, $\TT_{L,\0}$ is the finest mesh generated by the adaptive algorithm);
\item the index set $\PPP_{\mathrm{ref}} := \PPP_L \cup \MMM_L$ is the union of the final index set and the set of marked indices
at the final iteration.
\end{itemize}
For the sake of clarity and reproducibility,
in Table~\ref{tab:results} we show the stopping tolerance and the resulting values of $L$, \revx{$M_{\PPP_L}$ (the number of active parameters in the final SGFEM approximation $\uu_L$)}, $\dim \V_L$, and $\dim \V_\mathrm{ref}$ for each setup.

\begin{table}[h]
\begin{tabular}{c|c|c|c|c|c|c|r}
& \rev{$\sigma$} & \rev{$A$} & $\tol$ & $L$ & $\revx{M_{\PPP_L}}$ & $\dim \V_L$ & $\dim \V_\mathrm{ref}$ \\
 \hline
{\Large \strut}%
Setup 1 & \rev{3/2} & \rev{0.345} & \rev{\num{7e-7}} & \rev{15} & \revx{15} & \rev{\num{192188}} & \rev{\num{457806195}} \\
Setup 2 & \rev{11/10} & \rev{0.085} & \num{5e-6} & \rev{15} & \revx{15} & \rev{\num{225251}} & \rev{\num{258115326}} \\
Setup 3 & \rev{4/3} & \rev{0.250} & \rev{\num{6e-7}} & \rev{16} & \revx{16} & \rev{\num{343317}} & \rev{\num{1088434319}} \\
Setup 4 & \rev{2} & \rev{0.547} & \num{6e-5} & 16 & \revx{16} & \num{290858} & \num{350589351} \\
\end{tabular}
\caption{
\rev{Parameter $\sigma$ and the resulting value of $A$ in~\eqref{eq:Eigel:coefficient},
the stopping tolerance $\tol$ and the resulting values of
$L$, $\revx{M_{\PPP_L}}$, $\dim \V_L$, and $\dim \V_\mathrm{ref}$ for all four setups.}}
\label{tab:results}
\end{table}

\rev{As a further validation, for experimental setups where the goal functional
is represented in the form
\begin{equation*}
\gbold(\uu) = \int_\Gamma G(\uu(\cdot,\y)) \, \d{\pi}(\y)
\end{equation*}
for some $G: \X \to \R$,
we follow~\cite{egsz14,egsz15}
and compute a Monte Carlo-based approximation of the error in the goal functional
at each iteration of the adaptive algorithm.
To that end, we select a truncation parameter $M_{\aa} \in \N$ and denote by
$\big\{ \y^{(i)} = \big(y_1^{(i)},y_2^{(i)},\ldots,y_{M_{\aa}}^{(i)}\big) \big\}_{i=1}^{M_\mathrm{MC}}$
a set of
independent, uniformly distributed realizations of the random parameter vector
$(y_1,y_2,\ldots,y_{M_{\aa}}) \in [-1,1]^{M_{\aa}}$.
For each $i = 1,2,\ldots,M_\mathrm{MC}$,
we denote by $\widetilde u_h^{(i)}$ a finite element approximation
(computed on a sufficiently fine mesh) of the solution to
the following (deterministic) boundary value~problem:
\begin{equation*}
\begin{aligned}
 -\nabla \cdot (\widetilde\aa(x,\y^{(i)}) \nabla \widetilde u^{(i)}(x)) 
 &= \ff(x,\y^{(i)}), \quad && x \in D,\\
\widetilde u^{(i)}(x) & = 0, \quad && x \in \partial D,
\end{aligned}
\end{equation*}
where
\begin{equation} \label{eq:truncated:coeff}
\widetilde \aa(x,\y) = a_0(x) + \sum_{m=1}^{M_{\aa}} y_m a_m(x)
\quad \text{for } x \in D \text{ and } \y \in [-1,1]^{M_{\aa}}
\end{equation}
denotes an approximation of the diffusion coefficient $\aa(x,\y)$
obtained by truncating the expansion in~\eqref{eq1:a} after $M_{\aa}$ terms.
Then, the error in the goal functional can be approximated as follows:
\begin{equation*}
\begin{aligned}
\lvert \gbold(\uu) - \gbold(\uu_\ell) \rvert
&=
\bigg\vert \int_\Gamma [ G(\uu(\cdot,\y)) {-} G(\uu_\ell(\cdot,\y)) ] \, \d{\pi}(\y) \bigg\vert \\[3pt]
&\approx
\frac{1}{M_\mathrm{MC}} \sum_{i=1}^{M_\mathrm{MC}}
\big\lvert G(\widetilde u_h^{(i)}) - G(\uu_\ell(\cdot,\y^{(i)})) \big\rvert
=:
e_\ell^\mathrm{MC}.
\end{aligned}
\end{equation*}
\revx{Here, the number $M_\mathrm{MC}$ of Monte Carlo samples is chosen so that the empirical variance $\text{Var}_G$
of the vector $\big( \big\lvert G(\widetilde u_h^{(i)}) - G(\uu_\ell(\cdot,\y^{(i)})) \big\rvert \big)_{i=1}^{M_\mathrm{MC}}$ satisfies the inequality
$\sqrt{\text{Var}_G / M_\mathrm{MC}} \le \alpha \, \tol$ for some $\alpha \in (0,1)$.
Furthermore, the truncation parameter $M_{\aa}$ needs to be chosen so that the associated truncation error $\| \aa - \widetilde \aa \|_{L^\infty(D \times \Gamma)}$
is a fraction of $\sqrt{tol}$.
For diffusion coefficients represented by~\eqref{eq1:a}, the following coarse estimate of the truncation error can be obtained:
\begin{equation} \label{eq:truncation:error}
   \| \aa - \widetilde \aa \|_{L^\infty(D \times \Gamma)} \le \sum\limits_{m=M_{\aa}+1}^{\infty} \|a_m\|_{L^\infty(D)}.
\end{equation}
We note, however, that sharper results exist for diffusion coefficients represented by random fields
with piecewise analytic (resp., piecewise smooth) covariance functions,
where it is shown that the truncation error decays exponentially (resp., algebraically) with respect to $M_{\aa}$; see, e.g.,~\cite[Proposition~4.2]{fst05}).}
}

\subsection{Problem specifications}

Let us now describe the problem specifications for each setup.

$\bullet$ \emph{Setup~1: expectation of a weighted $L^2$-norm}.
The physical domain is the unit square $D = (0,1)^2$.
\rev{The decay rate of coefficient amplitudes in~\eqref{eq:Eigel:coefficient} is $\sigma=3/2$.}
The right-hand side function is constant: $\ff \equiv 1$ in $D$.
The goal functional is the expectation of the (squared) weighted $L^2$-norm:
\begin{equation}
\gbold(\uu) = \int_\Gamma \int_D w(x) \, \uu(x,\y)^2  \, \d{x} \, \d{\pi}(\y); \label{eq:quadratic_goal}
\end{equation}
cf.~\cite[Section~3.1]{bip2021}.
In this experiment, we choose $w = \chi_S / \vert S \vert$,
where $\chi_S : D \to \{ 0, 1\}$ is the characteristic function of the square $S = (5/8, 7/8) \times (9/16,13/16) \subset D$.
The initial mesh $\TT_0$ is a uniform mesh of 512 right-angled triangles.

$\bullet$ \emph{Setup~2: expectation of a nonlinear convection term}.
The physical domain is the L-shaped domain $D = (-1,1)^2 \setminus (-1,0]^2$.
\rev{We set $\sigma=11/10$ in~\eqref{eq:Eigel:coefficient}.}
The right-hand side function is constant: $\ff \equiv 1$ in $D$.
The goal functional is the expectation of a nonlinear convection term, i.e.,
\begin{equation*}
\gbold(\uu) = \int_\Gamma \int_D w(x) \, \uu(x,\y)
                 \bigg( \frac{\partial \uu}{\partial x_1}(x,\y) + \frac{\partial \uu}{\partial x_2}(x,\y) \bigg) \, \d{x} \, \d{\pi}(\y)
\end{equation*}
(see \cite[Section~3.2]{bip2021}),
where $w = \chi_T / \vert T \vert$ and $\chi_T : D \to \{ 0, 1\}$ is the characteristic function of the triangle 
$T = \text{conv}\{(1,0), (1,1), (0,1)\} \cap D \subset D$.
The initial mesh $\TT_0$ is a uniform mesh of 384 right-angled triangles.

$\bullet$ \emph{Setup~3: second moment of a linear goal functional}.
The physical domain is the unit square domain $D = (0,1)^2$.
\rev{The decay rate of coefficient amplitudes
in~\eqref{eq:Eigel:coefficient} is set to $\sigma=4/3$.}
Inspired by \cite[Example~7.3]{ms2009}, we choose the right-hand side function $\ff$ such that
\begin{equation*}
F(\vv)
=
- \int_\Gamma \int_{T_f} \frac{\partial \vv}{\partial x_1}(x,\y) \, \d{x} \, \d{\pi}(\y)
\quad
\text{for all } \vv \in \V,
\end{equation*}
where 
$T_f = \text{conv}\{(0,0), (1/2,0), (0,1/2)\} \cap D \subset D$.
In the spirit of~\cite[Section~3.4(b)]{tsgu2013}, the goal functional is given by the (rescaled) second moment of a linear functional.
Specifically, we consider the following goal functional:
\begin{equation*}
\gbold(\uu) = 100 \int_\Gamma \left( \int_D w(x) \, \uu(x,\y) \, \d{x} \right)^2 \d{\pi}(\y),
\end{equation*}
where $w = \chi_{T_g} / \vert T_g \vert$.
Here, $\chi_{T_g} : D \to \{ 0, 1\}$ is the characteristic function of the triangle
$T_g = \text{conv}\{(1,1/2), (1,1), (1/2,1)\} \cap D \subset D$.
The initial mesh $\TT_0$ is a uniform mesh of 512 right-angled triangles.

$\bullet$ \emph{Setup~4: variance of a linear goal functional}.
Let $D_\delta = (-1,1)^2 \setminus \overline{T}_\delta$, where $T_\delta = \text{conv}\{(0,0), (-1,\delta), (-1,-\delta)\}$.
In this test case, we aim at performing computations on the (physical) slit domain $(-1,1)^2 \setminus ([-1,0]\,\times\,\{0\})$.
The slit domain is not Lipschitz, however,
it is well known that an elliptic problem on this domain can be seen as the limit of the problems posed on the Lipschitz domain
$D_\delta$ as $\delta \to 0$.
Therefore, we set $D = D_\delta$ with $\delta = 0.005$.
\rev{The decay rate of coefficient amplitudes in~\eqref{eq:Eigel:coefficient} is $\sigma=2$.}
The right-hand side function is constant: $\ff \equiv 1$ in $D$.
The goal functional is given by the (rescaled) variance of a linear functional.
Specifically, we consider the following goal functional:
\begin{equation} \label{eq:variance_goal}
\gbold(\uu) = 100 \, \text{Var}_{\y} \left[ \int_D w(x) \, \uu(x,\y) \, \d{x} \right],
\end{equation}
where the weight function $w \in L^\infty(D)$ is a mollifier centered at $x_0 = (2/5 , -1/2)$ with radius $r = 3/20$
(we refer to~\cite[equation~(58)]{bprr18+} for the specific expression).
Thus, the integral over $D$ in~\eqref{eq:variance_goal} approximates the function value $\uu(x_0,\y)$
for each $\y \in \Gamma$.
The initial mesh $\TT_0$ is a uniform mesh of 512 right-angled triangles.

We note that the four nonlinear goal functionals considered in this section satisfy inequality~\eqref{eq:nonlinear:ass:goal}
with $\Cgoal > 0$ depending only on $\norm{w}{L^{\infty}(D)}$ and the Poincar\'e constant of the physical domain~$D$.

\revx{In all experiments, we run Algorithm~\ref{algorithm} with $\theta=1/2$ in~\eqref{eq:doerfler:primal}--\eqref{eq:nonlinear:doerfler:dual}.
The Monte Carlo-based validation of our goal-oriented error estimates is performed for Setups~1--3.
Note that using the bound in~\eqref{eq:truncation:error} to determine the truncation parameter $M_{\aa}$ in~\eqref{eq:truncated:coeff} results in excessively high values of $M_{\aa}$.
Indeed, the inequality $\sum_{m=M_{\aa}+1}^{\infty} \|a_m\|_{L^\infty(D)} < \sqrt{\tol}$ holds
for $M_{\aa} = 7 \cdot 10^5$ in Setup~1, for $M_{\aa} \gg 10^9$ in Setup~2, and for $M_{\aa} = 10^9$ in Setup~3.
Using these values of $M_{\aa}$ is computationally infeasible.
However, the number of parameters activated by the adaptive algorithm in Setups~1--3 does not exceed 16
(see the values $M_{\PPP_L}$ in Table~\ref{tab:results}).
Therefore, in our experiments, we choose $M_{\aa} = 100$ that should be sufficient to perform the Monte Carlo-based validation for these specific test examples.
Performing this validation with $M_{\aa} \ge 10^4$ remains an open question.
The number of Monte Carlo samples is chosen as $M_\mathrm{MC} = 150$ to ensure that the empirical variance $\text{Var}_G$
satisfies the inequality $\sqrt{\text{Var}_G / M_\mathrm{MC}} \le \tol/10$.}

\subsection{Results}

In Figure~\ref{fig:exp2_pictures}, for all setups,
we show the adaptively refined mesh associated with the zero index
at an intermediate step of Algorithm~\ref{algorithm}.
We observe that, in all cases, the meshes capture the spatial features of
the primal and dual solutions;
these features are induced by the geometry of the physical domain as well as by the local features of the chosen
right-hand side function $\ff$ and goal functional $\gbold$.
The intensity of local mesh refinement reflects the strength of the singularity;
e.g., in the plot for Setup~2 (top-right), the local mesh refinement at the reentrant corner of the L-shaped domain
is stronger than the one due to the local support of the weight function $w$.

\begin{figure}[ht]
\begin{minipage}[c]{0.3\textwidth}
\centering
Setup~1 (\rev{$\TT_{14,\0}$})\\
\smallskip
\includegraphics*[height=4.5cm, trim = 117 44 97 30]{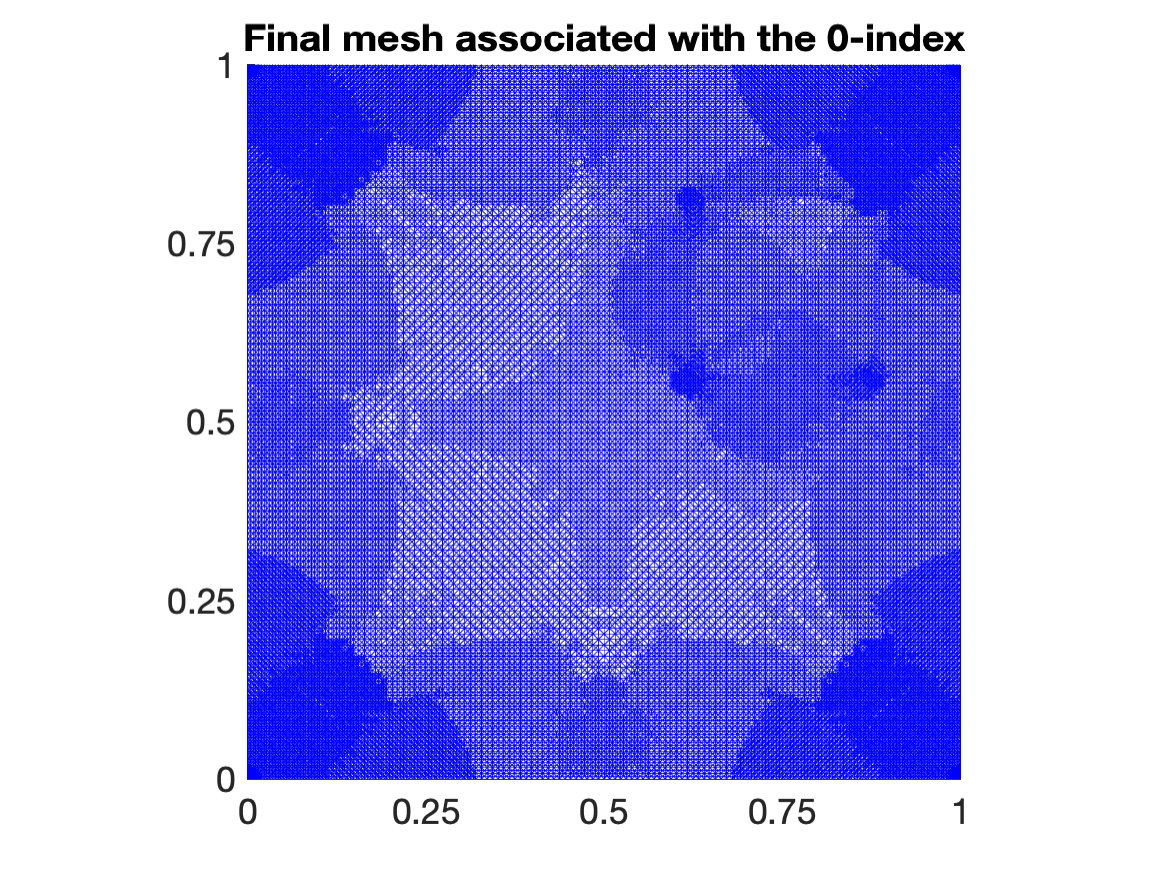}
\end{minipage}
\qquad
\begin{minipage}[c]{0.3\textwidth}
\centering
Setup~2 (\rev{$\TT_{14,\0}$})\\
\smallskip
\includegraphics*[height=4.5cm, trim = 267 87 225 59]{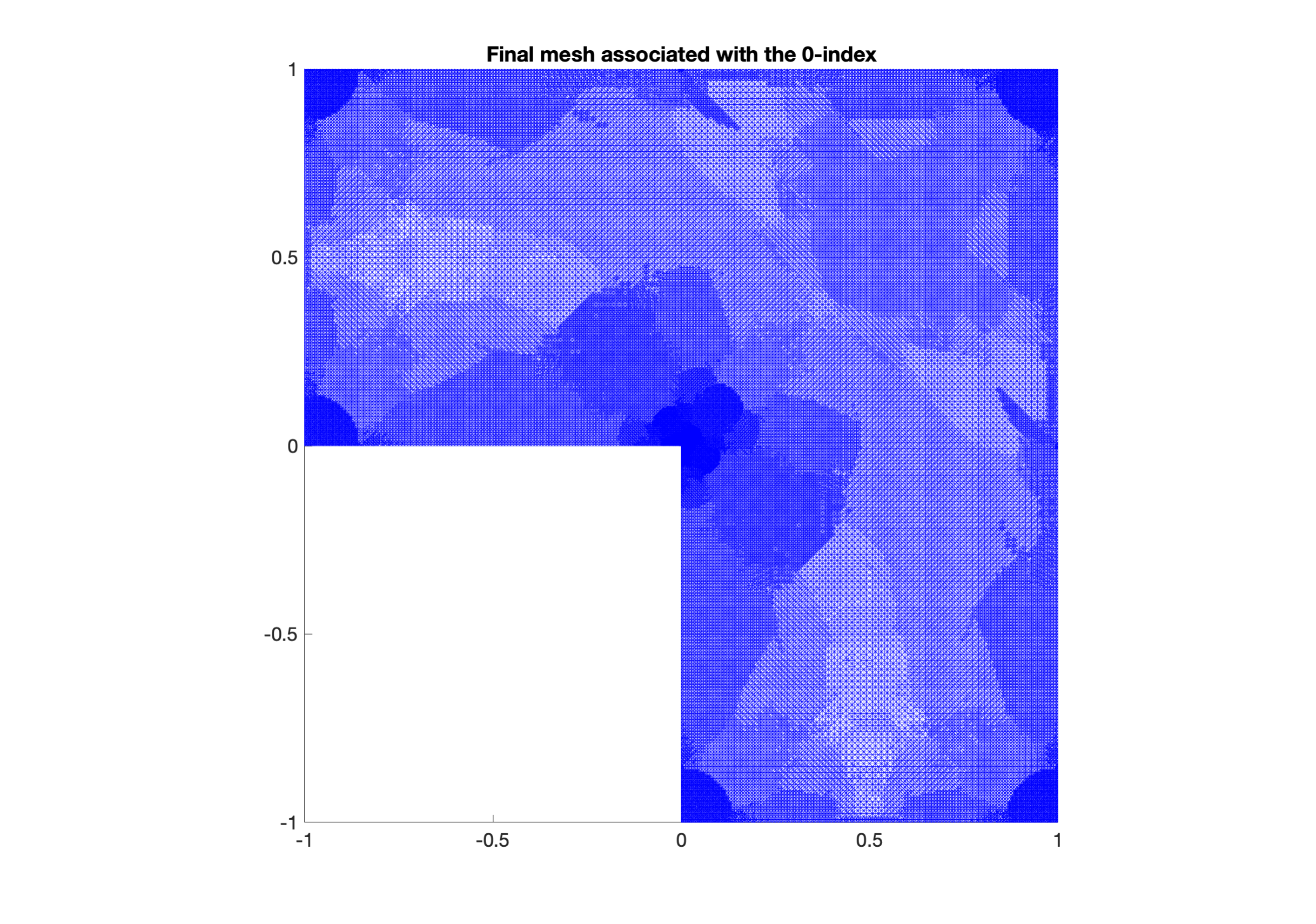}
\end{minipage}\\
\bigskip
\begin{minipage}[c]{0.3\textwidth}
\centering
Setup~3 (\rev{$\TT_{14,\0}$})\\
\smallskip
\includegraphics*[height=4.5cm, trim = 151 87 118 58]{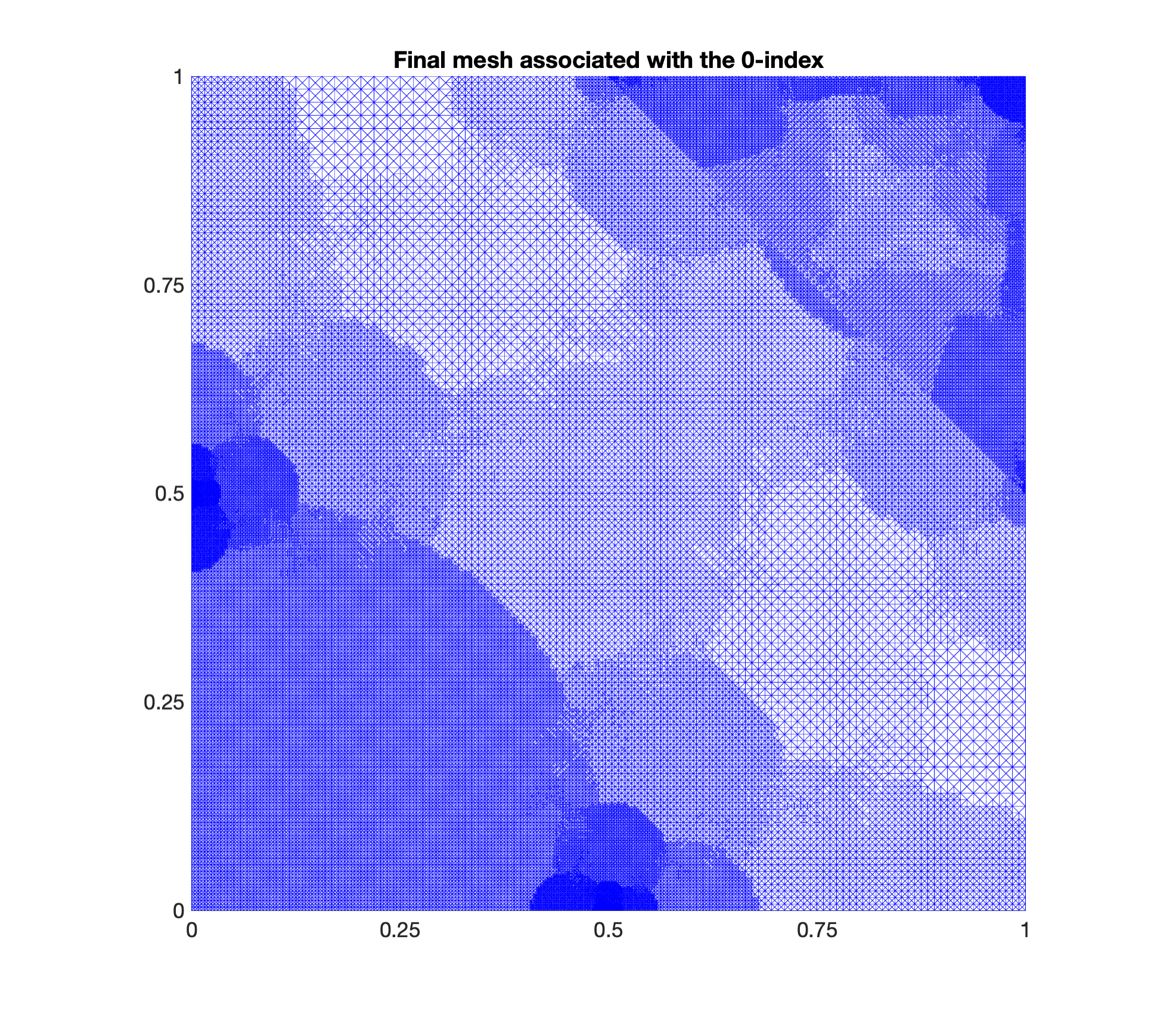}
\end{minipage}
\qquad
\begin{minipage}[c]{0.3\textwidth}
\centering
Setup~4 ($\TT_{14,\0}$)
\smallskip
\includegraphics*[height=4.5cm, trim = 117 44 97 30]{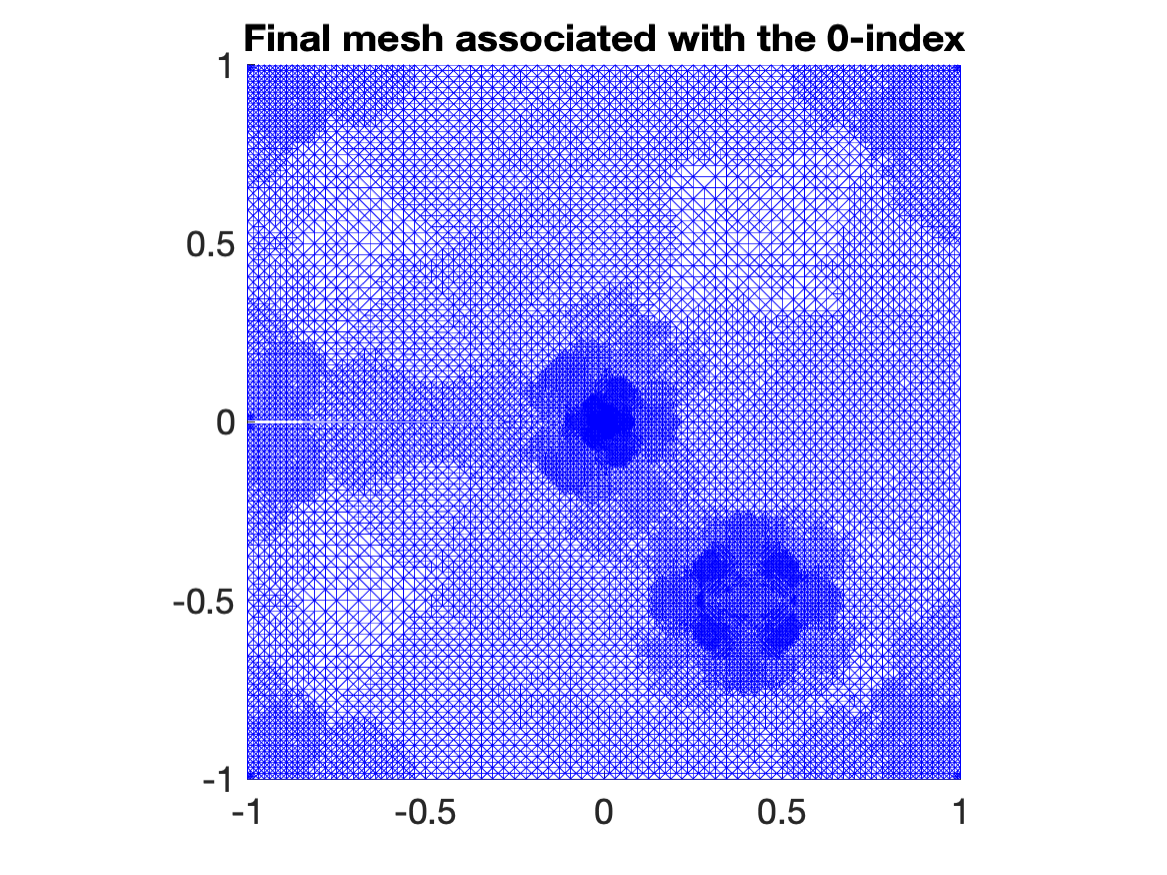}
\end{minipage}
\caption{
Adaptively refined meshes associated with the zero index
at an intermediate step of Algorithm~\ref{algorithm} for all four setups.}
\label{fig:exp2_pictures}
\end{figure}

\begin{figure}[ht]
\begin{tikzpicture}
\pgfplotstableread{data/setup1_sn07_decay15.dat}{\nonlinearOne}
\begin{loglogaxis}
[
title = Setup~1,
width = 7.5cm, height=6cm,						
xlabel={number of DOFs, $N_\ell$}, 					
ymajorgrids=true, xmajorgrids=true, grid style=dashed,	
xmin = (1.5)*10^(2),
xmax = (3)*10^(5),
ymin = (8.0)*10^(-8),
ymax = (0.9)*10^(-3),
legend style={legend pos=south west, legend cell align=left, fill=none, draw=none}
]
\addplot[red, thick, solid, mark=o]		table[x=dofs, y=error_product]{\nonlinearOne};
\addplot[blue, thick, solid, mark=square]		table[x=dofs, y=truegerr]{\nonlinearOne};
\addplot[teal, thick, solid, mark=triangle]		table[x=dofs, y=MCerror]{\nonlinearOne};
\addplot[black,dashed,domain=10^(3.0):10^(6)] {0.8*x^(-1) };
\node at (axis cs:3e5,3e-5) [anchor=south east] {$\mathcal{O}(N_\ell^{-1})$};
\legend{
$\mu_\ell (\mu_\ell^2 + \zeta_\ell^2)^{1/2}$,
$\lvert \gbold(\uu_\mathrm{ref}) - \gbold(\uu_\ell) \rvert$,
$e_\ell^\mathrm{MC}$
}
\end{loglogaxis}
\end{tikzpicture}
\quad
\begin{tikzpicture}
\pgfplotstableread{data/setup2_sn09_decay11.dat}{\nonlinearTwo}
\begin{loglogaxis}
[
title = Setup~2,
width = 7.5cm, height=6cm,						
xlabel={number of DOFs, $N_\ell$}, 					
ymajorgrids=true, xmajorgrids=true, grid style=dashed,	
xmin = (1.0)*10^(2),
xmax = (3.5)*10^(5),
ymin = (4.0)*10^(-7),
ymax = (0.6)*10^(-2),
legend style={legend pos=south west, legend cell align=left, fill=none, draw=none}
]
\addplot[red, thick, solid, mark=o]		table[x=dofs, y=error_product]{\nonlinearTwo};
\addplot[blue, thick, solid, mark=square]		table[x=dofs, y=truegerr]{\nonlinearTwo};
\addplot[teal, thick, solid, mark=triangle]		table[x=dofs, y=MCerror]{\nonlinearTwo};
\addplot[black,dashed,domain=10^(3.0):10^(6)] {7.0*x^(-1) };
\node at (axis cs:3e5,2e-4) [anchor=south east] {$\mathcal{O}(N_\ell^{-1})$};
\legend{
$\mu_\ell (\mu_\ell^2 + \zeta_\ell^2)^{1/2}$,
$\lvert \gbold(\uu_\mathrm{ref}) - \gbold(\uu_\ell) \rvert$,
$e_\ell^\mathrm{MC}$
}
\end{loglogaxis}
\end{tikzpicture}\\
\bigskip
\begin{tikzpicture}
\pgfplotstableread{data/setup3_sn08_decay43.dat}{\nonlinearThree}
\begin{loglogaxis}
[
title = Setup~3,
width = 7.5cm, height=6cm,						
xlabel={number of DOFs, $N_\ell$}, 					
ymajorgrids=true, xmajorgrids=true, grid style=dashed,	
xmin = (1.5)*10^(2),
xmax = (5.0)*10^(5),
ymin = (0.3)*10^(-8),
ymax = (1.5)*10^(-3),
legend style={legend pos=south west, legend cell align=left, fill=none, draw=none}
]
\addplot[red, thick, solid, mark=o]	table[x=dofs, y=error_product]{\nonlinearThree};
\addplot[blue, thick, solid, mark=square]		table[x=dofs, y=truegerr]{\nonlinearThree};
\addplot[teal, thick, solid, mark=triangle]		table[x=dofs, y=MCerror]{\nonlinearThree};
\addplot[black,dashed,domain=10^(3.0):10^(6.0)] {2.0*x^(-1) };
\node at (axis cs:5e5,6e-5) [anchor=south east] {$\mathcal{O}(N_\ell^{-1})$};
\legend{
$\mu_\ell (\mu_\ell^2 + \zeta_\ell^2)^{1/2}$,
$\lvert \gbold(\uu_\mathrm{ref}) - \gbold(\uu_\ell) \rvert$,
$e_\ell^\mathrm{MC}$
}
\end{loglogaxis}
\end{tikzpicture}
\quad
\begin{tikzpicture}
\pgfplotstableread{data/setup4_sn10.dat}{\nonlinearFour}
\begin{loglogaxis}
[
title = Setup~4,
width = 7.5cm, height=6cm,						
xlabel={number of DOFs, $N_\ell$}, 					
ymajorgrids=true, xmajorgrids=true, grid style=dashed,	
xmin = (1.5)*10^(2),
xmax = (4)*10^(5),
ymin = (5.0)*10^(-6),
ymax = (4.0)*10^(-2),
legend style={legend pos=south west, legend cell align=left, fill=none, draw=none}
]
\addplot[red, thick, solid, mark=o]		table[x=dofs, y=error_product]{\nonlinearFour};
\addplot[blue, thick, solid, mark=square]		table[x=dofs, y=truegerr]{\nonlinearFour};
\addplot[black,dashed,domain=10^(3.0):10^(6)] {100.0*x^(-1) };
\node at (axis cs:4e5,2e-3) [anchor=south east] {$\mathcal{O}(N_\ell^{-1})$};
\legend{
$\mu_\ell (\mu_\ell^2 + \zeta_\ell^2)^{1/2}$,
$\lvert \gbold(\uu_\mathrm{ref}) - \gbold(\uu_\ell) \rvert$,
}
\end{loglogaxis}
\end{tikzpicture}
\caption{
\rev{Evolution of the error estimates $\mu_\ell (\mu_\ell^2 + \zeta_\ell^2)^{1/2}$,
the reference errors $\lvert \gbold(\uu_\mathrm{ref}) - \gbold(\uu_\ell) \rvert$, and,
for Setups~1--3, the Monte Carlo-based approximations $e_\ell^\mathrm{MC}$
of the error in the goal functional
at each iteration of the goal-oriented adaptive algorithm.}}
\label{fig:exp2_convergence}
\end{figure}

\rev{In Figure~\ref{fig:exp2_convergence},
for all setups,
we plot the error estimates $\mu_\ell (\mu_\ell^2 + \zeta_\ell^2)^{1/2}$ (red circular markers) and
the SGFEM-based reference errors $\lvert \gbold(\uu_\mathrm{ref}) - \gbold(\uu_\ell) \rvert$ (blue square markers); for Setups~1--3, we also plot
the Monte Carlo-based approximations $e_\ell^\mathrm{MC}$
of the error in the goal functional (green triangular markers);
all these quantities are plotted against
the number of DOFs at each iteration of the adaptive algorithm.
Looking at the plots, we} observe that, for all setups, the goal-oriented adaptive algorithm drives
the error estimate to zero, thus confirming the result of Theorem~\ref{thm:nonlinear:plain_convergence}.
Furthermore, we see that in each setup, the error estimate provides an upper bound
for the reference error
\rev{computed using the SGFEM reference solution and, in Setups~1--3,
for the approximation of the error obtained using Monte Carlo sampling.}
Finally,
all plots in Figure~\ref{fig:exp2_convergence} show that
\rev{the error estimates, the SGFEM-based reference errors,
and, in Setups~1--3, Monte Carlo-based approximations of the error in the goal functional
all decay with the rate
$N_\ell^{-1}$, which is}
the best possible decay rate achievable by conforming first-order finite elements.
Although the rate optimality property of Algorithm~\ref{algorithm} for nonlinear goal functionals
is not currently covered by our theoretical analysis, the results presented in Figure~\ref{fig:exp2_convergence}
seem to suggest that this property does hold
at least for problems with sufficiently fast decaying amplitudes of the coefficients in expansion~\eqref{eq1:a}
and for certain types of nonlinear functionals;
see \cite{bip2021} for first theoretical results in the parameter-free setting for the case of a quadratic goal~functional.

\section{Concluding remarks} \label{sec:conclusions}

The design of provably efficient solution strategies for high-dimensional parametric PDEs is important for reliable uncertainty quantification.
Adaptive algorithms are indispensable in this context, as they provide computationally cost-effective mechanisms for
generating accurate approximations and accelerating convergence.
In this paper, we have designed a provably convergent goal-oriented SGFEM-based adaptive algorithm
for accurate approximation of quantities of interest---linear or nonlinear
functionals of solutions to elliptic PDEs with inputs depending on infinitely many parameters.
Our theoretical results and algorithmic developments are valid for spatial domains in $\R^2$ and $\R^3$.
Our numerical results (for two-dimensional spatial domains and nonlinear goal functionals) show that
employing the \emph{multilevel} SGFEM for approximating
the primal and dual solutions leads to converging approximations of the underlying quantities of interest.
In the case of bounded linear goal functionals and for spatial domains in $\R^2$,
the rate optimality property (in the sense of approximation classes) can be proved under an appropriate saturation assumption (see~\cite{bpr25}).
The extension of this result to nonlinear goal functionals and three-dimensional domains is non-trivial and will be the subject
of future research.


\ifsisc
%
\else
\newpage
\appendix
\section{Proof of Lemma~\ref{lemma:lebesque}} \label{sec:appendix_a}

The proof is split into two steps.

{\bf Step~1.}
Let $\eps > 0$.
For arbitrary $N \in \N$ we can choose $k_0 \in \N$ such that
\begin{align*}
 \sum_{n = 1}^{N} |\alpha_n - \alpha_n^{(k)}| + C \sum_{n = 1}^\infty |\beta_n - \beta_n^{(k)}| < \eps
 \quad \text{for all } k \ge k_0.
\end{align*}
Hence, by using the triangle inequality and the fact that $|\alpha_n^{(k)}| \le C \, |\beta_n^{(k)}|$ we find that
\begin{align*}
 \sum_{n = 1}^{N} |\alpha_n| &\le 
 \sum_{n = 1}^{N} |\alpha_n^{(k)}| + \sum_{n = 1}^{N} |\alpha_n - \alpha_n^{(k)}| \le
 C \sum_{n = 1}^{N} |\beta_n^{(k)}| + \sum_{n = 1}^{N} |\alpha_n - \alpha_n^{(k)}|
 \\
 &\le
 C \sum_{n = 1}^{N} |\beta_n| + C \sum_{n = 1}^{N} |\beta_n - \beta_n^{(k)}| + \sum_{n = 1}^{N} |\alpha_n - \alpha_n^{(k)}| <
 C \sum_{n = 1}^{\infty} |\beta_n| + \eps
\end{align*}
for any $\eps > 0$ and for arbitrary $N \in \N$. Therefore,
$
 \sum_{n = 1}^{\infty} |\alpha_n| \le C\,\sum_{n = 1}^{\infty} |\beta_n| < \infty.
$

{\bf Step~2.}
Let $\eps\,{>}\,0$.
Since $\sum_{n = 1}^\infty |\alpha_n| + C \sum_{n = 1}^\infty |\beta_n| < \infty$,
we can choose $n_0 \in \N$ such~that 
\begin{align*}
 \sum_{n = n_0}^\infty |\alpha_n| + C \sum_{n = n_0}^\infty |\beta_n| < \eps.
\end{align*}
Due to the assumed convergence, we can choose $k_0 \in \N$ such that
\begin{align*}
 \sum_{n = 1}^{n_0-1} |\alpha_n - \alpha_n^{(k)}| + C \sum_{n = 1}^\infty |\beta_n - \beta_n^{(k)}| < \eps
 \quad \text{for all } k \ge k_0.
\end{align*}
Then, for all $k \ge k_0$, the triangle inequality yields that
\begin{align*}
 \sum_{n = 1}^\infty |\alpha_n - \alpha_n^{(k)}|
 &\le \sum_{n = 1}^{n_0-1} |\alpha_n - \alpha_n^{(k)}| + \sum_{n = n_0}^\infty |\alpha_n| + \sum_{n = n_0}^\infty |\alpha_n^{(k)}|
 \\
 &\le \sum_{n = 1}^{n_0-1} |\alpha_n - \alpha_n^{(k)}| + \sum_{n = n_0}^\infty |\alpha_n| + C \sum_{n = n_0}^\infty |\beta_n^{(k)}|
 \\
 &\le \sum_{n = 1}^{n_0-1} |\alpha_n - \alpha_n^{(k)}| + \sum_{n = n_0}^\infty |\alpha_n| +
   C \sum_{n = n_0}^\infty |\beta_n| + C \sum_{n = n_0}^\infty |\beta_n - \beta_n^{(k)}|
 < 2 \eps. 
\end{align*}
This concludes the proof.

\fi

\bibliographystyle{alpha}
\bibliography{literature}

\end{document}